\newtheorem {theorem}{Theorem}[section]
\newtheorem {corollary}{Corollary}[section]
\newtheorem {proposition}{Proposition}[section]
\newtheorem {lemma}{Lemma}[section]
\newtheorem {definition}{Definition}[section]
\newtheorem {remark}{Remark}[section]
\newcommand{\R}{\mathbb{R}}
\def\tr{\mathrm{tr}}
\def\co{\mathrm{co}}
\def\EES{{\accent"5E e}\kern-.5em\raise.8ex\hbox{\char'23 }}
\def\ow{o\kern-.42em\raise.82ex\hbox{
   \vrule width .12em height .0ex depth .075ex \kern-0.16em \char'56}\kern-.07em}
\def\OW{o\kern-.460em\raise1.36ex\hbox{
\vrule width .13em height .0ex depth .075ex \kern-0.16em
\char'56}\kern-.07em}
\def\DD{D\kern-.7em\raise0.4ex\hbox{\char '55}\kern.33em}
\title{\L ojasiewicz-type inequalities with explicit exponents for the largest eigenvalue function of real symmetric polynomial matrices}
\author{S\~i Ti\d{\^e}p \DD INH$^\dagger$}
\address{$^\dagger$Institute of Mathematics, VAST, 18, Hoang Quoc Viet Road, Cau Giay District 10307, Hanoi, Vietnam}
\email{dstiep@math.ac.vn}
\author{Ti\EES n S\OW n Ph\d{A}m$^\ddagger$}
\address{$^\ddagger$Department of Mathematics, University of Dalat, 1 Phu Dong Thien Vuong, Dalat, Vietnam}
\email{sonpt@dlu.edu.vn}
\thanks{$^\dagger$This author's research is funded by Vietnam National Foundation for Science and Technology Development (NAFOSTED) under grant number 101.04-2014.23 and the Vietnam Academy of Science and Technology (VAST)}
\thanks{$^\ddagger$This author's research is funded by Vietnam National Foundation for Science and Technology Development (NAFOSTED) under grant number 101.04-2013.07.}
\subjclass{Primary 32B20; Secondary 14P10}
\keywords{Largest eigenvalue; \L ojasiewicz inequalities; non-degeneracy; polynomial matrices}
\date{ \today}
\begin{document}
\maketitle

\begin{abstract}
Let $F(x) := (f_{ij}(x))_{i,j=1,\ldots,p},$ be a real symmetric  polynomial matrix of order $p$ and let $f(x)$ be the largest eigenvalue function of the matrix $F(x).$
We denote by ${\partial}^\circ f(x)$ the Clarke subdifferential of $f$ at $x.$
In this paper, we first give the following {\em nonsmooth} version of \L ojasiewicz gradient inequality for the function $f$ with an explicit exponent: For any $\bar x\in \Bbb R^n$ there exist $c > 0$ and $\epsilon > 0$ such that we have for all $\|x - \bar{x}\| < \epsilon,$
\begin{equation*}
\inf \{ \| w \| \ : \ w \in {\partial}^\circ f(x) \} \ \ge \ c\, |f(x) - f(\bar x)|^{1 - \frac{1}{\mathscr{R}(2n+p(n+1),d+3)}},
\end{equation*}
where $d:=\max_{i,j = 1, \ldots, p}\deg f_{i j}$ and  $\mathscr{R}$ is a function introduced  by D'Acunto and Kurdyka: $\mathscr{R}(n, d) := d(3d - 3)^{n-1}$ if $d \ge 2$ and $\mathscr{R}(n, d) := 1$ if $d = 1.$ Then we establish some local and global versions of \L ojasiewicz inequalities which bound the distance function to the set $\{x\in\Bbb R^n\ : \ f(x)\le 0\}$ by some exponents of the function $[f(x)]_+:=\max\{f(x),0\}$.
\end{abstract}

\pagestyle{plain}

\section{Introduction}

Given an extend real-valued lower semicontinuous function $f:\Bbb R^n\to\Bbb R\cup\{+\infty\}$ and a set $K \subset \mathbb{R}^n$, consider the set $S$ given by the following inequality
\begin{equation}\label{EQ1}
f(x)\le 0, \quad x\in K.
\end{equation}
Let $x \in K$ be such that $f(x)>0$. In general, it is hard to answer the following questions: Based on the value of $f$ at $x$, how close is $x$ to $S$? In other words, if $f(x)$ is small, whenever is $x$ a good approximation of a point in $S$, i.e., the distance from $x$ to $S$ is small? However, in many cases, these questions can be answered by bounding the distance to $S$ by some exponents of the function $[f(x)]_+:=\max\{f(x),0\}$. In this paper, when $K$ is compact, we are interested in \L ojasiewicz-type inequalities of the following forms

\begin{equation}
\label{LEB}c\, \mathrm{dist}(x, S)\le [f(x)]_+^\alpha \quad \text{ for all } \quad x\in K,
\end{equation}
\begin{equation}
\label{GEB}c\, \mathrm{dist}(x, S)\le [f(x)]_+^\alpha+[f(x)]_+^\beta \quad \text{ for all } \quad x\in \Bbb R^n,
\end{equation}
where $c>0$ is a constant, $\alpha>0,\ \beta>0$ are some constants to be determined and $\mathrm{dist}(x,S)$ is the Euclidean distance from $x$ to $S$.

Let $V := \{x \in \mathbb{R}^n \ : \ f(x) = 0\}$. When the function $f$ is real analytic, the existence of inequality (\ref{LEB}) can be deduced easily from the following (classical) \L ojasiewicz inequality by noting that $\mathrm{dist}(x,V)=\mathrm{dist}(x,S)$ for $f(x)\ge 0$.

\begin{theorem} [see \cite{Hormander1958, Lojasiewicz1958, Lojasiewicz1959, Lojasiewicz1965}] 
Assume that $f^{-1}(0) \ne \emptyset$ and let $K$ be a compact subset in $\Bbb{R}^n$. Then there exist $c>0$ and $\alpha > 0$ such that
$$c\, \mathrm{dist}(x,V)^\alpha \le |f(x)|,  \quad \mbox{ for } \quad x \in K.$$
\end{theorem}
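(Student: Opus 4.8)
This is a classical result, quoted in the paper with references to its sources; here is the route I would take to prove it.

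\textbf{Step 1: reduction to a germ inequality.} The case $V\cap K=\emptyset$ is trivial, since then $|f|\ge\delta>0$ on the compact set $K$ while $\mathrm{dist}(\cdot,V)\le M$ there, so $c\,\mathrm{dist}(x,V)^{\alpha}\le cM^{\alpha}\le\delta\le|f(x)|$ for $c$ small and any $\alpha$. So assume $V\cap K\neq\emptyset$. I would then prove the statement locally: for each $\bar x\in V\cap K$ exhibit a ball $B(\bar x,r)$ and constants $c_{\bar x},\alpha_{\bar x}>0$ with $c_{\bar x}\,\mathrm{dist}(x,V)^{\alpha_{\bar x}}\le|f(x)|$ on $B(\bar x,r)$. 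Shrinking $r$, one may also arrange that for $x\in B(\bar x,r)$ the distance $\mathrm{dist}(x,V)$ is attained at a point of $V\cap B(\bar x,2r)$ (the nearest point of $V$ to such an $x$ lies within $r$ of $\bar x$, because $\bar x\in V$), so this is genuinely a statement about the local analytic geometry of $V$ at $\bar x$. Finally I would cover the compact set $V\cap K$ by finitely many such balls $B_1,\dots,B_m$; on $(B_1\cup\dots\cup B_m)\cap K$ one may take $\alpha:=\max_i\alpha_i$ and absorb the exponent mismatch into the constant via $\mathrm{dist}(x,V)^{\alpha}\le M^{\alpha-\alpha_i}\,\mathrm{dist}(x,V)^{\alpha_i}$, while on $K\setminus(B_1\cup\dots\cup B_m)$ --- compact and disjoint from $V$ --- the trivial bound applies. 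Combining gives one pair $(c,\alpha)$ valid on all of $K$.

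\textbf{Step 2: the germ inequality.} This is the real-analytic heart. My plan is to invoke Hironaka's resolution of singularities for $f$ near $\bar x$: there are a real-analytic manifold $M$ and a proper real-analytic map $\pi\colon M\to U$ which restricts to an isomorphism over $U\setminus V$ and for which, in suitable local coordinates on $M$, the pullback $f\circ\pi$ has normal-crossing form $\pm u(y)\,y_1^{a_1}\cdots y_n^{a_n}$ with $u$ a unit that does not vanish. On such a chart, for $y$ near the exceptional set, I would estimate
\[
\mathrm{dist}(\pi(y),V)\ \le\ L\,\mathrm{dist}\bigl(y,\{y_1^{a_1}\cdots y_n^{a_n}=0\}\bigr)\ \le\ L\,\bigl|y_1^{a_1}\cdots y_n^{a_n}\bigr|^{1/|a|}\ \le\ L'\,|f(\pi(y))|^{1/|a|},
\]
where $|a|:=a_1+\dots+a_n$, using properness of $\pi$, a uniform bound on $\|d\pi\|$ on a relatively compact subchart, and the lower bound $|u|\ge c_0>0$ there. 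Since $\pi^{-1}$ of a small compact ball about $\bar x$ is covered by finitely many such charts, this yields $\mathrm{dist}(x,V)^{\alpha}\le c\,|f(x)|$ near $\bar x$ with $\alpha$ the largest $|a|$ that occurs. A second route, closer to the spirit of this paper, is to use the \L ojasiewicz gradient inequality $\|\nabla f\|\ge c\,|f|^{\theta}$ near $\bar x$ together with the finiteness of lengths of gradient trajectories, which gives $\mathrm{dist}(x,V)\le C\,|f(x)|^{1-\theta}$ and hence the inequality with $\alpha=1/(1-\theta)$.

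\textbf{Where the difficulty lies.} Step 1 is routine bookkeeping; all the content is the local monomialization of Step 2 --- equivalently, a \emph{uniform} bound on the order to which $f$ can vanish along real-analytic arcs issuing from a point of $V$. A naive curve-selection attempt --- choose $x_m\to\bar x$ with $x_m\notin V$ and $|f(x_m)|\le\mathrm{dist}(x_m,V)^{m}$, then pick an analytic arc $\gamma_m$ through $\bar x$ inside the semianalytic set $\{\,|f|\le\mathrm{dist}(\cdot,V)^{m}\,\}\setminus V$ --- shows only that $f\circ\gamma_m$ vanishes to order $\ge m$ at $0$, and yields no contradiction, precisely because this order may grow with $m$ in the absence of resolution of singularities (or of \L ojasiewicz's combinatorial ``normal partition'' argument) to control it. Furnishing that uniform control is exactly the substance of the cited works, and it is the step I expect to be the genuine obstacle.
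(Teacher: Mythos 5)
This statement is quoted in the paper as a classical theorem, with references to H\"ormander and \L ojasiewicz, and no proof of it appears anywhere in the text; so there is no internal argument to compare yours against. Your sketch is a correct rendering of the standard proof: the compactness patching in Step 1 is sound (including the exponent bookkeeping $\mathrm{dist}(x,V)^{\alpha}\le M^{\alpha-\alpha_i}\,\mathrm{dist}(x,V)^{\alpha_i}$ and the trivial bound off a neighbourhood of $V\cap K$), and in Step 2 the chart estimate is right, since on a normal-crossing chart $\mathrm{dist}(y,\{y_1^{a_1}\cdots y_n^{a_n}=0\})=\min_{a_i>0}|y_i|\le|y_1^{a_1}\cdots y_n^{a_n}|^{1/|a|}$ and properness lets you cover the preimage of a small closed ball by finitely many such charts; invoking Hironaka's monomialization as a black box is legitimate for a result of this vintage, and your closing paragraph correctly identifies that this (or \L ojasiewicz's own normal-partition argument) is where the real content sits, which is exactly why the paper cites rather than proves it. It is worth noting that your second route --- gradient inequality plus finiteness of the length of (sub)gradient trajectories, giving $\mathrm{dist}(x,V)\le C|f(x)|^{1-\theta}$ --- is the one closest to this paper's own methodology: it is precisely the mechanism used in the proof of Theorem \ref{CompactErrorBound}, via \cite{Bolte2007}, with the explicit polynomial exponent of \cite{Acunto2005} replacing the analytic one; the only caveat is that for general real-analytic $f$ the gradient inequality is itself part of the same classical corpus, so that variant is a reduction to an equally deep cited fact rather than an independent proof.
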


When $K$ is not compact, inequality (\ref{LEB}) does not hold in general. However, when $K$ is defined by a system of polynomial inequalities $K:=\{f_1(x)\ge 0,\ldots,f_p(x)\ge 0\}$, under some assumptions of non-degeneracy at infinity, the authors of \cite{HaHV2013} (for $p=1$) and of  \cite{Dinh2014-1} proved the existence of inequality (\ref{GEB}) where $f(x) := \max_{i=1,\ldots,p}f_i(x)$ and $c,\alpha,\beta$ are some positive constants. Moreover, the exponents $\alpha,\beta$ are determined explicitly.

In this paper, we restrain to the case that $f$ is the largest eigenvalue function of a symmetric polynomial matrix. Sensitivity results on eigenvalue functions are important in view of applications. Largest eigenvalue or matrix norm minimization arises in control theory, structural and combinatorial optimization, graph theory, stability analysis of dynamic systems etc. We invite the reader to the survey \cite{Lewis1996} for more details.

Here and in the following, ${\mathscr{R}}$ is a function defined by:
\begin{equation}\label{Eq4}
\mathscr{R}(n, d) := \begin{cases}
d(3d - 3)^{n-1} & \text { if } d\ge 2,\\
1 & \text { if } d=1,
\end{cases}
\end{equation}
for any positive integers $n$ and $d.$ Let $\mathbb{B}^n(x, r)$ denote the closed ball of radius $r$ centered at $x$ , let $\mathbb{B}^n$ and $\mathbb{S}^{n - 1}$ be the closed unit ball and the unit sphere, respectively. For each real number $r,$ we put $[r]_+ := \max\{r, 0\}.$ 
We denote by ${\mathcal S}^p$ the set of real symmetric matrices of order $p$. We write $A \succeq 0$ (resp., $A \preceq 0$) if $A\in {\mathcal S}^p$ is positive (resp., negative) semidefinite. The trace of a symmetric matrix $A \in {\mathcal S}^p$ is denoted by $\tr(A).$

The first main result of the paper is a nonsmooth version of \L ojasiewicz gradient inequality for the largest eigenvalue function with an explicit exponent, which is an important tool to prove the existence of (\ref{LEB}). The estimation of the exponent is based on the estimation of \L ojasiewicz exponent in the \L ojasiewicz gradient inequality for polynomials given by D'Acunto and Kurdyka in \cite{Acunto2005}.  Now with the definitions in the next section, the first main contribution of this paper is the following.

\begin{theorem}\label{NonSmoothTheorem}
Let $F \colon  \Bbb R^n \rightarrow {\mathcal S}^p,\ x\mapsto F(x)=(f_{ij}(x))_{i,j=1,\ldots,p},$ be a symmetric polynomial matrix of order $p$. Let $f(x)$ be the largest eigenvalue function and $d:=\max_{i,j = 1, \ldots, p}\deg f_{i j}.$ Then for any $\bar x\in \Bbb R^n,$ there exist $c > 0$ and $\epsilon > 0$ such that we have for all $x \in \mathbb{B}^n(\bar{x}, \epsilon),$
\begin{equation}\label{Eq5}
\inf \{ \| w \| \ : \ w \in {\partial}^\circ f(x) \} \ \ge \ c\, |f(x) - f(\bar x)|^{1 - \frac{1}{\mathscr{R}(2n+p(n+1),d+3)}}.
\end{equation}
In particular,
\begin{equation}\label{Eq5b}
\frak m_f(x) \ \ge \ c\, |f(x) - f(\bar x)|^{1 - \frac{1}{\mathscr{R}(2n+p(n+1),d+3)}} \quad \textrm{ for all } \quad x \in \mathbb{B}^n(\bar{x}, \epsilon),
\end{equation}
where $\frak m_f$ is  the nonsmooth slope of $f$ (Definition \ref{mf}).
\end{theorem}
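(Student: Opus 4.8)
The plan is to bound $\inf\{\|w\|:w\in\partial^\circ f(x)\}$ from below by the norm of the gradient of one explicit polynomial living on $\mathbb{R}^N$ with $N:=2n+p(n+1)$ and of degree at most $D:=d+3$, and then to invoke the D'Acunto--Kurdyka \L ojasiewicz gradient inequality for polynomials \cite{Acunto2005}. First I would reduce: if $d=0$ then $F$, hence $f$, is constant and there is nothing to prove, so assume $d\ge 1$; then $\mathscr{R}(N,D)>1$ and $\theta:=1-\tfrac{1}{\mathscr{R}(N,D)}\in(0,1)$. Replacing $F$ by $F-f(\bar x) I$ (which changes neither $d$, nor $\partial^\circ f$, nor the quantity $|f(x)-f(\bar x)|$) I may assume $f(\bar x)=0$, so that $F(\bar x)\preceq 0$. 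I would then use the standard description of the Clarke subdifferential of the largest eigenvalue function: since $\lambda_{\max}$ is convex on ${\mathcal S}^p$, Clarke's chain rule for the $C^1$ inner map $F$ gives
\[ \partial^\circ f(x)\ \subseteq\ \mathrm{co}\,\bigl\{\,\bigl(u^\top (\partial F(x)/\partial x_1)\,u,\ \ldots,\ u^\top (\partial F(x)/\partial x_n)\,u\bigr)\ :\ u\in\mathbb{R}^p,\ \|u\|=1,\ F(x)u=f(x)u\,\bigr\}, \]
and I record that each generating vector $u$ in this set satisfies $u^\top F(x)\,u=f(x)$.

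The heart of the proof is the lift. Define $G\colon\mathbb{R}^n\times\mathbb{R}^n\times(\mathbb{R}^p)^{n+1}\to\mathbb{R}$ by
\[ G(x,\,s,\,u_1,\ldots,u_{n+1})\ :=\ \sum_{i=1}^{n+1} s_i\, u_i^\top F(x)\, u_i,\qquad s_{n+1}:=1-s_1-\cdots-s_n. \]
Its ambient dimension is $n+n+(n+1)p=2n+p(n+1)=N$ and its degree is at most $1+(d+2)=D$, which is precisely the pair of parameters appearing in (\ref{Eq5}). Now fix $x$ near $\bar x$ and let $w^\ast$ be the least-norm element of the compact convex set $\partial^\circ f(x)$; by Carath\'eodory's theorem applied to the generating set above, $w^\ast=\sum_{i=1}^{n+1}s^\ast_i w^{(i)}$ with $s^\ast_i\ge 0$, $\sum_i s^\ast_i=1$, and $w^{(i)}=\bigl((u^\ast_i)^\top(\partial F(x)/\partial x_j)\,u^\ast_i\bigr)_{j=1}^n$ for some unit eigenvectors $u^\ast_i$ of $F(x)$ for the eigenvalue $f(x)$. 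Put $\widehat y(x):=(x,s^\ast_1,\ldots,s^\ast_n,u^\ast_1,\ldots,u^\ast_{n+1})\in\mathbb{R}^N$. A direct computation, using $F(x)u^\ast_i=f(x)u^\ast_i$, gives $G(\widehat y(x))=\sum_i s^\ast_i f(x)=f(x)$ together with
\[ \nabla_x G(\widehat y(x))=w^\ast,\qquad \nabla_s G(\widehat y(x))=0,\qquad \nabla_{u_i}G(\widehat y(x))=2s^\ast_i f(x)\,u^\ast_i, \]
so that $\|\nabla G(\widehat y(x))\|^2=\|w^\ast\|^2+4f(x)^2\sum_i(s^\ast_i)^2\le\|w^\ast\|^2+4f(x)^2$.

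It remains to localize. The set $Z:=\{\bar x\}\times\{s\ge 0:\sum_i s_i=1\}\times\{u\in\mathbb{R}^p:\|u\|=1,\ F(\bar x)u=f(\bar x)u\}^{n+1}$ is compact, $G$ vanishes identically on $Z$, and as $x\to\bar x$ every admissible lift over $x$ — i.e.\ any $(x,s,u_1,\ldots,u_{n+1})$ with $s$ in the simplex, $\|u_i\|=1$, and $u_i^\top F(x)u_i=f(x)$ — converges to $Z$, because $\|u_i\|=1$ and $u_i^\top F(\bar x)u_i\to f(\bar x)=\lambda_{\max}(F(\bar x))$ together with $F(\bar x)\preceq 0$ force $u_i$ towards the top eigenspace of $F(\bar x)$. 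Covering $Z$ by finitely many balls on which the D'Acunto--Kurdyka inequality holds for $G$ with a common constant $c_0>0$ and exponent $\theta$, and shrinking $\epsilon>0$ so that for $x\in\mathbb{B}^n(\bar x,\epsilon)$ the point $\widehat y(x)$ lies in one of those balls and $|f(x)-f(\bar x)|$ is so small that $4\,|f(x)-f(\bar x)|^{2-2\theta}\le\tfrac12 c_0^2$, one obtains for all such $x$
\[ \inf\{\|w\|:w\in\partial^\circ f(x)\}^2=\|w^\ast\|^2\ \ge\ \|\nabla G(\widehat y(x))\|^2-4f(x)^2\ \ge\ c_0^2|f(x)-f(\bar x)|^{2\theta}-\tfrac12 c_0^2|f(x)-f(\bar x)|^{2\theta}, \]
which is (\ref{Eq5}) with $c:=c_0/\sqrt2$. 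Finally (\ref{Eq5b}) follows from (\ref{Eq5}) and the inequality $\frak m_f(x)\ge\inf\{\|w\|:w\in\partial^\circ f(x)\}$, immediate from Definition \ref{mf}.

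The one step that genuinely requires an argument, rather than bookkeeping, is the clustering of admissible lifts on the compact set $Z$ on which $G$ takes the value $f(\bar x)$: this is exactly what allows the D'Acunto--Kurdyka inequality for $G$ to be applied at a uniform scale even at those $x$ where $f(x)$ is a multiple eigenvalue and no smooth eigenvector selection exists. The remaining points — the convex-hull/Carath\'eodory description of $\partial^\circ f(x)$, the identities for $\nabla G$ at $\widehat y(x)$, the absorption of the harmless term $4f(x)^2$ using $\theta<1$ (which dictates that $\epsilon$ be small), and the verification that $G$ has degree $\le d+3$ in exactly $2n+p(n+1)$ variables — are routine.
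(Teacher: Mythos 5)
Your proposal is correct and takes essentially the same route as the paper's proof: the polynomial $G$ you construct is exactly the paper's $G_{n+1}$ (Carath\'eodory plus zero weights replacing the family $G_r$), and your uniform D'Acunto--Kurdyka bound near the compact set of limiting lifts, followed by absorbing the $O(|f(x)|)$ contribution of the eigenvector blocks, mirrors the paper's eigenvector-stability lemma and its gradient-inequality lemma for $G_r$. The only deviations are cosmetic: the Euclidean norm instead of the sum norm, and a qualitative upper-semicontinuity argument for $E(x)$ in place of the paper's quantitative H\"older stability estimate.
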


As applications, we prove a local \L ojasiewicz-type inequality (Theorem \ref{CompactErrorBound}) and a version of separation of semialgebraic sets with explicit exponents (Proposition \ref{Separation}). 

In global context, we give two versions of global \L ojasiewicz-type inequalities with explicit exponents, one is obtained by modifying the left side of (\ref{LEB}) by dividing this side by an explicit function which is big ``at infinity" (Corollary \ref{GlobalKollar}), the other takes the form of (\ref{GEB}). Precisely, inspired by \cite{Khovanskii1978} and \cite{Kouchnirenko1976}, we introduce a new condition of non-degeneracy at infinity for symmetric polynomial matrices under which, we study global \L ojasiewicz-type inequality of the type (\ref{GEB}) where $f$ is the largest eigenvalue function of a symmetric polynomial matrix.

\begin{theorem}\label{HolderTypeTheorem}
Let $F \colon \Bbb R^n \rightarrow {\mathcal S}^p,\ x\mapsto F(x)=(f_{ij}(x))_{i,j=1,\ldots,p},$ be a symmetric polynomial matrix of order $p$ such that $S_F := \{x \in {\Bbb R}^n \ : \ F(x) \preceq  0\} \ne \emptyset.$ Suppose that $F$ is non-degenerate at infinity, $f_{ii}$ is convenient, and that $\Gamma(f_{ij})\subseteq\Gamma(f_{ii})$ for $i,j=1,\ldots,p.$ Let $f(x)$ be the largest eigenvalue function. Then there exists a constant $c>0$ such that
\begin{equation*}
c\, \mathrm{dist}(x, S_F) \ \le \ [f(x)]_+^{\frac{1}{\mathscr{R}(2n+p(n + 1), d+3)}} + [f(x)]_+ \quad \textrm{ for all } \quad x \in \mathbb{R}^n,
\end{equation*}
where $d:=\max_{i,j = 1, \ldots, p}\deg f_{i j}.$
\end{theorem}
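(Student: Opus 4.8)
The plan is to prove the desired inequality separately on a large ball and on the complement of that ball, and then to glue the two estimates together. Throughout write $\alpha := \frac{1}{\mathscr{R}(2n+p(n+1),d+3)}$ and $S := S_F$; note that $0 < \alpha \le 1$ because $\mathscr{R} \ge 1$, and that $S = \{x \in \mathbb{R}^n : f(x) \le 0\}$ since the largest eigenvalue of $F(x)$ is nonpositive exactly when $F(x) \preceq 0$. For the \emph{bounded part}, pick any $R > 0$ and apply the local \L ojasiewicz-type inequality (Theorem \ref{CompactErrorBound}) on the compact ball $\mathbb{B}^n(0,R)$: this yields a constant $c_1 > 0$ with $c_1\,\mathrm{dist}(x, S) \le [f(x)]_+^{\alpha}$, hence also $c_1\,\mathrm{dist}(x,S) \le [f(x)]_+^{\alpha} + [f(x)]_+$, for every $x \in \mathbb{B}^n(0,R)$.

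The \emph{part at infinity} is the heart of the matter and the only place where the hypotheses ``$F$ non-degenerate at infinity'', ``$f_{ii}$ convenient'' and ``$\Gamma(f_{ij}) \subseteq \Gamma(f_{ii})$'' intervene. I claim there are $c_2 > 0$ and $R > 0$ such that
\begin{equation*}
c_2\,\mathrm{dist}(x, S) \ \le\ [f(x)]_+^{\alpha} + [f(x)]_+ \qquad \text{whenever } \|x\| \ge R .
\end{equation*}
I would argue by contradiction: if no such pair existed, the semialgebraic function $x \mapsto [f(x)]_+ / \mathrm{dist}(x,S)$ (well defined and positive off $S$, where necessarily $f(x) > 0$) would have infimum $0$ on every neighbourhood of infinity, so by the curve selection lemma at infinity there would be a real-analytic curve $\varphi \colon (0,\varepsilon) \to \mathbb{R}^n$ with $\|\varphi(t)\| \to +\infty$, $f(\varphi(t)) > 0$, and $[f(\varphi(t))]_+/\mathrm{dist}(\varphi(t),S) \to 0$ as $t \to 0^+$. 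Along such a curve the contradiction comes from a \emph{\L ojasiewicz gradient inequality at infinity for $f$}: under the stated assumptions there should exist $c_3 > 0$ and $R_0 > 0$ with $\frak m_f(x) \ge c_3\,[f(x)]_+^{1-\alpha}$ for all $\|x\| \ge R_0$ with $f(x) > 0$, complemented by an estimate controlling $\mathrm{dist}(x,S)$ by $[f(x)]_+$ in the regime where $f$ is large at infinity --- these two regimes being exactly what accounts for the two exponents $\alpha$ and $1$. Integrating a Clarke-subgradient descent curve of $f$ issued from $\varphi(t)$ and using the lower bound on $\frak m_f$ bounds the length of that curve, hence $\mathrm{dist}(\varphi(t),S)$, by a constant multiple of $[f(\varphi(t))]_+^{\alpha} + [f(\varphi(t))]_+$, contradicting $[f(\varphi(t))]_+/\mathrm{dist}(\varphi(t),S) \to 0$.

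To establish the \L ojasiewicz gradient inequality at infinity for $f$, I would follow the scheme of Theorem \ref{NonSmoothTheorem} transplanted to the ``at infinity'' setting: express $\frak m_f$ through the Clarke subdifferential of the largest eigenvalue function, realize $\partial^\circ f(x)$ by means of unit eigenvectors and multipliers so that the estimate reduces to a gradient inequality for an explicit polynomial system in $2n + p(n+1)$ variables of degree $d+3$, verify that the non-degeneracy at infinity of $F$ together with the Newton-polytope hypotheses ($f_{ii}$ convenient and $\Gamma(f_{ij}) \subseteq \Gamma(f_{ii})$) makes this enlarged polynomial system non-degenerate at infinity in the Khovanskii--Kouchnirenko sense, and then invoke a quantitative \L ojasiewicz gradient inequality at infinity for non-degenerate polynomials, which supplies precisely the exponent $1 - \alpha = 1 - \frac{1}{\mathscr{R}(2n+p(n+1),d+3)}$. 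I expect the genuine obstacle to be this transfer of the non-degeneracy condition from the matrix $F$ to the Lagrangian-type polynomial system: the conditions on the Newton polytopes of the entries $f_{ij}$ are precisely what one needs in order to keep the auxiliary variables (the eigenvector components and the multipliers) under control at infinity.

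It remains to \emph{glue} the two estimates. Choose $R > 0$ so large that both the bounded estimate (on $\mathbb{B}^n(0,R)$) and the estimate at infinity (on $\{x : \|x\| \ge R\}$) are available, and put $c := \min\{c_1,c_2\} > 0$. For $\|x\| \le R$ the bounded estimate gives $c\,\mathrm{dist}(x,S) \le [f(x)]_+^{\alpha} + [f(x)]_+$, and for $\|x\| \ge R$ the estimate at infinity gives the same inequality; since these regions cover $\mathbb{R}^n$, the theorem follows.
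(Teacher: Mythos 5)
Your compact part and the final gluing are exactly the paper's: Theorem \ref{CompactErrorBound} on a ball supplies the exponent $\frac{1}{\mathscr{R}(2n+p(n+1),d+3)}$, and the two regions are combined by taking the minimum of the constants. The gap is in the part at infinity, which is the entire content of the theorem and the only place the hypotheses enter, and there your key tool is not available. The inequality you propose to invoke, $\frak m_f(x)\ge c_3\,[f(x)]_+^{1-\alpha}$ for all large $\|x\|$ with $f(x)>0$, is false in general under the stated hypotheses: already for $n=p=1$ and $F(x)=x^2-1$ (convenient and non-degenerate at infinity) one has $\frak m_f(x)=2|x|$ while $[f(x)]_+^{1-\alpha}\sim |x|^{2(1-\alpha)}$ with $\alpha$ tiny, so the right-hand side outgrows the slope. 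Your hedge --- restrict the gradient inequality to where $f$ is small and use a separate estimate where $f$ is large --- is the right instinct, but you never establish either ingredient: the reduction of the matrix non-degeneracy condition to Khovanskii--Kouchnirenko non-degeneracy of the Lagrangian-type polynomial system, and the subsequent appeal to a ``quantitative \L ojasiewicz gradient inequality at infinity'' with precisely the D'Acunto--Kurdyka exponent, are exactly the step you yourself flag as the genuine obstacle, and no such off-the-shelf result exists to invoke. As it stands, the contradiction along your curve-selection curve never materializes.

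The paper's route is different and more economical at infinity. It proves (Lemma \ref{Lemma5}) that $\frak m_f(x)\ge c>0$ for all $\|x\|\ge R$, i.e.\ a constant lower bound on the nonsmooth slope, not a \L ojasiewicz-type one: this is done by contradiction with the Curve Selection Lemma at infinity applied to the eigenvector--multiplier representation of $\partial^\circ f$ from Lemma \ref{Lemma1}, Puiseux expansions of all the data, and a direct Newton-polyhedron analysis in which convenience of $f_{ii}$, the inclusions $\Gamma(f_{ij})\subseteq\Gamma(f_{ii})$, and non-degeneracy at infinity produce a face $\Delta\in\Gamma_\infty(F)$ and a symmetric matrix $\Omega$ with $\tr(\Omega F_\Delta(x^0))\ne 0$, forcing $\frak m_f(\varphi(t))\not\to 0$. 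The constant slope bound is then converted, via Ekeland's variational principle (Lemma \ref{Lemma6}), into the \emph{linear} error bound $\mathrm{dist}(x,S_F)\lesssim [f(x)]_+$ at infinity; so in the paper the exponent $1$ is entirely an at-infinity phenomenon and the exponent $\frac{1}{\mathscr{R}(2n+p(n+1),d+3)}$ entirely a compact one, rather than both arising from two regimes at infinity as in your sketch. To repair your argument you would essentially have to prove the paper's Lemma \ref{Lemma5} (or an equivalent), since that is where the non-degeneracy hypothesis actually does its work.
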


The paper is structured as follows. Section \ref{SectionPreliminary} presents some backgrounds in semi-algebraic geometry, subdifferentials, nonsmooth slope and Newton polyhedra. Theorem \ref{NonSmoothTheorem} is proved in Section \ref{LojasiewiczGradient}. Sections \ref{LocalErrorBounds} and \ref{GlobalErrorBounds} present some consequences of Theorem \ref{NonSmoothTheorem}. The proof of Theorem \ref{HolderTypeTheorem} is given in Section~\ref{GlobalHolderErrorBounds}. 

\section{Preliminaries} \label{SectionPreliminary}

\subsection{Semi-algebraic geometry}
In this subsection, we recall some notions and results of semi-algebraic geometry, which can be found in \cite{Benedetti1991, Bierstone1988, Bochnak1998, Dries1996}.

\begin{definition}{\rm
\begin{enumerate}
  \item[(i)] A subset of $\R^n$ is called {\em semi-algebraic} if it is a finite union of sets of the form
$$\{x \in\R^n \ : \ f_i(x) = 0, i = 1, \ldots, k; f_i(x) > 0, i = k + 1, \ldots, p\}$$
where all $f_{i}$ are polynomials.
 \item[(ii)]
Let $A \subset \Bbb{R}^n$ and $B \subset \Bbb{R}^p$ be semi-algebraic sets. A map $F \colon A \to B$ is said to be {\em semi-algebraic} if its graph
$$\{(x, y) \in A \times B \ : \ y = F(x)\}$$
is a semi-algebraic subset in $\Bbb{R}^n\times\Bbb{R}^p.$
\end{enumerate}
}\end{definition}

A major fact concerning the class of semi-algebraic sets is its stability under linear projections (see \cite{Seidenberg1954, Tarski1931, Tarski1951}).

\begin{theorem}[Tarski--Seidenberg Theorem] \label{TarskiSeidenbergTheorem}
The image of a semi-algebraic set by a semi-algebraic map is semi-algebraic.
\end{theorem}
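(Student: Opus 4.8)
The plan is to prove the Tarski--Seidenberg theorem in the form stated, namely that the image of a semi-algebraic set under a semi-algebraic map is semi-algebraic. First I would reduce the statement to a purely projective one. Given a semi-algebraic map $F \colon A \to B$ with $A \subseteq \Bbb R^n$ and $B \subseteq \Bbb R^p$ semi-algebraic, its graph $G := \{(x,y) \in A \times B \ : \ y = F(x)\}$ is by definition a semi-algebraic subset of $\Bbb R^n \times \Bbb R^p$. Since the image $F(A)$ is exactly the projection of $G$ onto the last $p$ coordinates, and any projection onto a subset of coordinates is a composition of coordinate projections $\Bbb R^{m+1} \to \Bbb R^m$ that forget the last coordinate, it suffices to prove the following: if $C \subseteq \Bbb R^{m+1}$ is semi-algebraic, then $\pi(C) \subseteq \Bbb R^m$ is semi-algebraic, where $\pi(x_1,\ldots,x_m,x_{m+1}) = (x_1,\ldots,x_m)$. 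I would state this reduction explicitly as the core lemma.

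Next I would prove the core lemma. Because a semi-algebraic set is a finite union of basic sets and $\pi$ commutes with unions, it is enough to treat $C = \{ (x,t) \in \Bbb R^m \times \Bbb R \ : \ g_i(x,t) = 0,\ i = 1,\ldots,k;\ h_j(x,t) > 0,\ j = 1,\ldots,\ell\}$. The key step is an elimination-of-quantifiers argument on the single real variable $t$: for fixed $x$, the question whether there exists $t$ satisfying the sign conditions is decided by the signs of a finite list of polynomials in $x$ obtained from the $g_i$, $h_j$ and their $t$-derivatives via iterated (pseudo-)division, leading/next-to-leading coefficients, resultants and subresultants. Concretely, I would invoke the standard Thom-type argument: partition $\Bbb R^m$ into finitely many semi-algebraic pieces on which the degrees in $t$ and the signs of all these auxiliary polynomials are constant; on each such piece the real roots of the $g_i, h_j$ in $t$ vary continuously and their order does not change, so whether the system is satisfiable for some $t$ is either always true or always false on the piece; collecting the pieces where it is true exhibits $\pi(C)$ as a finite Boolean combination of polynomial equalities and inequalities in $x$, hence semi-algebraic. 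This is essentially Cohen--H\"ormander elimination over $\Bbb R$ applied to one variable at a time.

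I would then assemble the induction: projecting away coordinates one at a time from $\Bbb R^n \times \Bbb R^p$ down to $\Bbb R^p$, each step preserves semi-algebraicity by the core lemma, so $F(A) = \pi(G)$ is semi-algebraic, and more generally the image of any semi-algebraic set under any semi-algebraic map is semi-algebraic; the general map case follows by composing the graph construction with the projection, exactly as in the reduction above.

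The main obstacle is the core one-variable elimination lemma: one must carefully show that the finitely many auxiliary polynomials in $x$ (leading coefficients, subresultant coefficients, etc.) genuinely determine the number and relative order of the real $t$-roots of the defining polynomials, and hence the truth of $\exists t$, uniformly on each stratum. Keeping the bookkeeping finite and the stratification semi-algebraic --- rather than, say, accidentally invoking the conclusion one is trying to prove --- is the delicate point; the cleanest route is to cite the Cohen--H\"ormander quantifier-elimination procedure for real closed fields and note that $\Bbb R$ is real closed, so that the projection of a set defined by a quantifier-free formula is again defined by a quantifier-free formula, which is precisely the assertion that $\pi(C)$ is semi-algebraic.
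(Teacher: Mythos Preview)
The paper does not prove this theorem at all: it is stated as a classical result with references to \cite{Seidenberg1954, Tarski1931, Tarski1951}, and is used only as background. Your proposal, by contrast, sketches an actual proof via the standard reduction to one-variable projection and Cohen--H\"ormander quantifier elimination; the outline is correct in spirit, though the core elimination step is merely asserted (``I would invoke the standard Thom-type argument'' and ``cite the Cohen--H\"ormander procedure'') rather than carried out, so as written it is a proof plan that ultimately defers to the same literature the paper cites. For the purposes of this paper a citation is entirely appropriate, and your sketch goes well beyond what is required here.
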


We list below some basic properties of semi-algebraic sets and functions.
\begin{enumerate}
\item[(i)] The class of semi-algebraic sets is closed with respect to Boolean operators; a Cartesian product of semi-algebraic sets is a semi-algebraic set;

\item[(ii)] The closure and the interior of semi-algebraic sets are semi-algebraic sets;

\item[(iii)]  A composition of semi-algebraic maps is a semi-algebraic map;

\item[(iv)]  If $S$ is a semi-algebraic set, then the distance function $$\mathrm{dist}(\cdot, S) \colon {\Bbb R}^n \rightarrow {\Bbb R}, \quad x \mapsto \mathrm{dist}(x, S) := \inf \{\|x - a\| \ : \ a \in S \},$$
is also semi-algebraic.
\end{enumerate}

Now we give a version of Curve Selection Lemma which will be used in the proof of Theorem \ref{HolderTypeTheorem}. For more details, see \cite{Milnor1968,Nemethi1992} and see \cite{Dinh2013} for a complete proof.

\begin{lemma}[Curve Selection Lemma at infinity]\label{CurveSelectionLemma}
Let $A\subset \mathbb{R}^n$ be a semi-algebraic set, and let $F := (f_1, \ldots,f_p) \colon  \mathbb{R}^n \to \mathbb{R}^p$ be a semi-algebraic map. Assume that there exists a sequence $x^k\in A$ such that $\lim_{k \to \infty} \| x^k  \| = \infty$ and $\lim_{k \to \infty} F(x^k)  = y \in(\overline{\mathbb{R}})^p,$ where $\overline{\mathbb{R}} := \mathbb{R} \cup \{\pm \infty\}.$ Then there exists a smooth semi-algebraic curve $\varphi \colon (0, \epsilon)\to \mathbb{R}^n$ such that $\varphi(t) \in A$ for all $t \in (0, \epsilon), \lim_{t \to 0} \|\varphi(t)\| = \infty,$ and $\lim_{t \to 0} F(\varphi(t)) = y.$
\end{lemma}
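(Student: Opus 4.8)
\emph{Proof proposal.} The plan is to reduce the statement to the classical finite Curve Selection Lemma for semi-algebraic sets (see \cite{Milnor1968, Bochnak1998}), via a semi-algebraic ``compactification'' that simultaneously turns the condition $\|x\| \to \infty$ into convergence to a finite point and turns the possibly infinite limits of the components $f_j$ into finite ones, while staying injective so that a curve built downstairs can be pulled back to $\mathbb{R}^n$. To this end, let $\iota \colon \mathbb{R}^n \setminus \{0\} \to \mathbb{R}^n \setminus \{0\}$, $\iota(x) := x/\|x\|^2$, be the inversion in the unit sphere: it is a semi-algebraic involution, and $\|x\| \to \infty$ exactly when $\iota(x) \to 0$. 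Let $\theta \colon \mathbb{R} \to (-1, 1)$, $\theta(t) := t/\sqrt{1 + t^2}$; this is an algebraic homeomorphism that extends to a homeomorphism $\overline{\mathbb{R}} \to [-1, 1]$ with $\theta(\pm\infty) = \pm 1$, whose inverse $s \mapsto s/\sqrt{1 - s^2}$ extends accordingly. Discarding finitely many terms, assume $x^k \ne 0$ for all $k$, and define the semi-algebraic map
\[
\Theta \colon \mathbb{R}^n \setminus \{0\} \to \mathbb{R}^n \times \mathbb{R}^p, \qquad \Theta(x) := \bigl( \iota(x),\ \theta(f_1(x)), \ldots, \theta(f_p(x)) \bigr),
\]
and put $B := \Theta(A \setminus \{0\})$, which is semi-algebraic by Theorem \ref{TarskiSeidenbergTheorem}. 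Since $\iota$ is injective, $\Theta$ is injective, so each $b \in B$ has a unique preimage in $A \setminus \{0\}$, namely $\iota$ of the first coordinate of $b$.

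Next, I would identify the relevant boundary point. Set $z := (\theta(y_1), \ldots, \theta(y_p)) \in [-1, 1]^p$, with $\theta(\pm\infty) := \pm 1$. Then $\Theta(x^k) \to (0, z)$: indeed $\|\iota(x^k)\| = 1/\|x^k\| \to 0$, and for each $j$ either $y_j \in \mathbb{R}$ and $\theta$ is continuous at $y_j$, or $y_j = \pm\infty$ and $\theta(f_j(x^k)) \to \pm 1 = \theta(y_j)$. Hence $(0, z) \in \overline{B}$. On the other hand, $(0, z) \notin B$, because a preimage $x \in A \setminus \{0\}$ would satisfy $\iota(x) = 0$, which is impossible. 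Applying the classical Curve Selection Lemma to $B$ at $(0, z) \in \overline{B} \setminus B$, we obtain a semi-algebraic curve $\gamma = (\gamma_0, \gamma_1, \ldots, \gamma_p) \colon [0, \epsilon) \to \mathbb{R}^n \times \mathbb{R}^p$, real analytic (hence $C^\infty$) on $(0, \epsilon)$, with $\gamma(0) = (0, z)$ and $\gamma(t) \in B$ for all $t \in (0, \epsilon)$.

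Finally, I would pull the curve back to $\mathbb{R}^n$. For each $t \in (0, \epsilon)$ we have $\gamma_0(t) \ne 0$, since otherwise $\gamma(t) = (0, \gamma_1(t), \ldots, \gamma_p(t)) \in B$ would force $\iota(x) = 0$ for some $x \in A \setminus \{0\}$. Therefore $\varphi(t) := \iota(\gamma_0(t)) = \gamma_0(t)/\|\gamma_0(t)\|^2$ is a well-defined smooth semi-algebraic curve on $(0, \epsilon)$. As $\varphi(t)$ is precisely the unique $\Theta$-preimage of $\gamma(t)$, we get $\varphi(t) \in A$ and $\theta(f_j(\varphi(t))) = \gamma_j(t)$ for every $j$. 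Letting $t \to 0$: from $\gamma_0(t) \to 0$ we get $\|\varphi(t)\| = 1/\|\gamma_0(t)\| \to \infty$; and from $\theta(f_j(\varphi(t))) = \gamma_j(t) \to \theta(y_j)$, applying the extended inverse of $\theta$, we get $f_j(\varphi(t)) \to y_j$ in $\overline{\mathbb{R}}$, i.e. $F(\varphi(t)) \to y$. This $\varphi$ is the curve required by the lemma.

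The only genuinely nontrivial ingredient is the construction of the map $\Theta$: it must be semi-algebraic, injective, convert $\|x\| \to \infty$ into a finite limit, and be compatible with infinite limits of the $f_j$, and the pair $(\iota, \theta)$ delivers all of this, with the involution property of $\iota$ making the pullback of $\gamma$ immediate. After the classical Curve Selection Lemma is invoked, everything reduces to a routine verification of limits; the one point requiring a little care is to treat all cases $y_j \in \mathbb{R}$ and $y_j = \pm\infty$ uniformly, which is exactly why $\theta$ is applied to every component rather than only those with an infinite limit.
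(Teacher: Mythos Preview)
Your proof is correct. The paper itself does not supply a proof of this lemma: it only points to \cite{Milnor1968, Nemethi1992} for background and to \cite{Dinh2013} for a complete argument. Your strategy---compose a semi-algebraic inversion $\iota(x)=x/\|x\|^2$ to turn $\|x\|\to\infty$ into convergence to the origin, and the algebraic homeomorphism $\theta(t)=t/\sqrt{1+t^2}$ to compress each $f_j$ into $(-1,1)$ so that possibly infinite limits become finite, then apply the classical (local) Curve Selection Lemma to the image set and pull back via the involution $\iota$---is exactly the standard reduction and is essentially what one finds in the cited references. The two small points that require care (that $\gamma_0(t)\ne 0$ for $t>0$ so that the pullback is defined, and that the extended inverse of $\theta$ on $[-1,1]$ recovers the limits in $\overline{\mathbb{R}}$ uniformly for finite and infinite $y_j$) are both handled correctly.
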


The following Growth Dichotomy Lemma is also useful in the proof of Theorem \ref{HolderTypeTheorem} (see, e.g., \cite{Dries1996, Miller1994}).

\begin{lemma}[Growth Dichotomy Lemma] \label{GrowthDichotomyLemma}
Let $f \colon (0, \epsilon) \rightarrow {\Bbb R}$ be a semi-algebraic function with $f(t) \ne 0$ for all $t \in (0, \epsilon).$ Then
there exist some constants $c \ne 0$ and $q \in {\Bbb Q}$ such that $f(t) = ct^q + o(t^q)$ as $t \to 0^+.$
\end{lemma}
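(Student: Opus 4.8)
The plan is to deduce the statement from the classical Newton--Puiseux theorem applied to the plane algebraic curve that carries the graph of $f$. Since $f$ is semi-algebraic, its graph $\Gamma_f := \{(t, f(t)) \ : \ t \in (0, \epsilon)\} \subseteq \R^2$ is a one-dimensional semi-algebraic set, so its Zariski closure is a (possibly reducible) real algebraic curve; in particular there is a nonzero polynomial $P \in \R[T, Y]$ with $P(t, f(t)) = 0$ for all $t \in (0, \epsilon)$. Dividing $P$ by the largest power of $T$ dividing it does not change its zero set on $\Gamma_f$, because $t > 0$ there, so I may assume $P(0, Y) \not\equiv 0$; then for every small enough $t > 0$ the univariate polynomial $Y \mapsto P(t, Y)$ is not identically zero and has $f(t)$ among its roots.

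Next I would invoke the Newton--Puiseux theorem: there is an integer $N \ge 1$ such that, on a punctured interval $0 < T < \delta$, the branches $Y = Y(T)$ of $P(T, Y) = 0$ that are real there are given by finitely many convergent fractional power series $Y(T) = \sum_{k \ge k_0} c_k T^{k/N}$ with $k_0 \in \Z$. After shrinking $\epsilon$, the function $f$ --- being continuous and single-valued on $(0, \epsilon)$ --- must coincide on $(0, \epsilon)$ with exactly one of these real branches, so $f(t) = \sum_{k \ge k_0} c_k\, t^{k/N}$ for all $0 < t < \epsilon$; equivalently, one may quote the semi-algebraic cell-decomposition (or monotonicity) theorem, which already asserts that $f$ is a Nash function on a small interval $(0, \epsilon')$ and hence admits a convergent Puiseux expansion there (see \cite{Bochnak1998, Dries1996}). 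Since $f(t) \ne 0$ on $(0, \epsilon)$, this series is not identically zero; letting $k_0$ be the least index with $c_{k_0} \ne 0$ and putting $q := k_0/N \in \Q$ and $c := c_{k_0} \ne 0$, one gets $f(t) = c\, t^q + \sum_{k > k_0} c_k\, t^{k/N} = c\, t^q + o(t^q)$ as $t \to 0^+$, which is the assertion.

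The only delicate point --- and the main (and rather minor) obstacle --- will be passing from the complex Puiseux expansion of the curve $\{P = 0\}$ to a bona fide real-valued branch valid for $t > 0$. I would handle this exactly as above: either by observing that the germ at $0^+$ of the real-valued continuous function $f$ is forced to be one of the finitely many real branches of the curve, or simply by citing the monotonicity and cell-decomposition theorems of semi-algebraic geometry, which deliver the Puiseux expansion directly and which I would not reprove here.
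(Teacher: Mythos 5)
Your argument is correct, and in fact the paper does not prove this lemma at all: it is quoted as a standard fact with references (\cite{Dries1996, Miller1994}), so there is no in-paper proof to compare with. What you have written is essentially the classical argument behind those citations: the graph of $f$ has semi-algebraic dimension one, hence its Zariski closure is a proper algebraic subset of $\R^2$, giving a nonzero $P\in\R[T,Y]$ with $P(t,f(t))=0$; after removing the $T$-content of $P$, Newton--Puiseux (in its Laurent--Puiseux form, allowing finitely many negative exponents, which correctly covers the case $f$ unbounded at $0^+$) describes the finitely many branches, and $f$ agrees with one of them near $0^+$. Two small points are worth tightening. First, $f$ is not assumed continuous in the statement, so the phrase ``being continuous and single-valued'' is not available as stated; but your fallback via the monotonicity/cell-decomposition theorem, which makes $f$ continuous (indeed Nash) on a smaller interval $(0,\epsilon')$, repairs this, and it should be invoked first rather than as an afterthought. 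Second, the identification of $f$ with a single branch deserves one line: for each of the finitely many real branches the set $\{t: f(t)=Y(t)\}$ is semi-algebraic, these sets cover $(0,\epsilon')$, and a semi-algebraic subset of $\R$ accumulating at $0$ contains an interval $(0,\epsilon'')$, so $f$ coincides with one branch on such an interval. With these minor touches your proof is complete and is exactly the standard route to the Growth Dichotomy Lemma; the only difference from the paper is that the authors simply cite it.
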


To end this part, let us recall the following \L ojasiewicz gradient inequality with an explicit exponent which will be used in the proof of Theorem \ref{NonSmoothTheorem}.

\begin{theorem} [see \cite{Acunto2005}] \label{GradientInequality}
Let $f \colon \mathbb{R}^n \rightarrow \mathbb{R}$ be a polynomial function of degree $d$. Assume that $f(\bar x)=0.$ Then there are some positive constants $c$ and $\epsilon$ such that
$$\|\nabla f(x)\| \ge c\, |f(x)|^{1 - \frac{1}{\mathscr{R}(n, d)}} \quad \text{ for all } \quad \|x - \bar{x}\| \le \epsilon,$$
where $\mathscr{R}(n,d)$ is defined by (\ref{Eq4}).
\end{theorem}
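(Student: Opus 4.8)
The plan is to recast the inequality as a one‑variable estimate on small level sets of $f$ and then to bound the critical exponent by applying B\'ezout's theorem to a polar (``valley'') variety of controlled degree. Translating $\bar x$ to the origin, we may take $f(0)=0$; if $\nabla f(0)\neq 0$ the inequality holds trivially for small $\epsilon$ (the left side stays bounded below while the right side tends to $0$), so assume $0$ is a critical point, and write $\mathscr R:=\mathscr R(n,d)$ and $\theta:=1-\tfrac1{\mathscr R}$. Fix a small $r>0$. Since a polynomial has only finitely many critical values, there is $\eta>0$ such that every $t$ with $0<|t|\le\eta$ is a regular value of $f$; for such $t$ with $f^{-1}(t)\cap\mathbb B^n(0,r)\neq\emptyset$ set
\[
m(t):=\min\bigl\{\,\|\nabla f(x)\|^{2}\ :\ f(x)=t,\ \|x\|\le r\,\bigr\},
\]
a minimum over a compact set which is strictly positive because $f^{-1}(t)$ contains no critical point. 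An elementary argument shows that the inequality of Theorem~\ref{GradientInequality} holds on some ball about the origin if and only if $m(t)\ge c\,|t|^{2\theta}$ for all small $|t|$: restricting to level sets gives one implication, and for the converse one uses that $|f|$ is as small as desired on a small ball and that the inequality is vacuous on $\{f=0\}$. Replacing $f$ by $-f$, it is enough to treat $t>0$.

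Since $m$ is semialgebraic (Tarski--Seidenberg), the Growth Dichotomy Lemma gives $m(t)=c_0\,t^{q}+o(t^{q})$ as $t\to0^{+}$ with $c_0>0$ and $q\in\mathbb Q$, and moreover $q>0$ (otherwise $\min\|\nabla f\|^2$ would stay bounded away from $0$, forcing $f$ to be constant near $0$). It remains to prove $q\le 2\theta$. I would realise $q$ geometrically: by definable choice pick a semialgebraic arc $\gamma\colon(0,\eta)\to\mathbb B^n(0,r)$ with $f(\gamma(t))=t$ and $\|\nabla f(\gamma(t))\|^{2}=m(t)$, converging as $t\to0^{+}$ to a critical point $x^{\ast}\in f^{-1}(0)\cap\mathbb B^n(0,r)$. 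Since $\gamma(t)$ is a constrained minimum of $\|\nabla f\|^2$, Lagrange's multiplier rule (note $\nabla\|\nabla f\|^2=2\,\mathrm{Hess}f\cdot\nabla f$) shows that $\gamma$ lies on the polar variety
\[
W':=\bigl\{\,x\in\mathbb R^{n}\ :\ \mathrm{rank}\,[\,\nabla f(x)\ \mid\ \mathrm{Hess}f(x)\,\nabla f(x)\,]\le 1\,\bigr\}
\]
(cut out by $2\times2$ minors, of degree $\le 3d-4$) when the minimum is attained in the interior of the ball, and on the analogous variety with the column $x$ adjoined, cut out by $3\times3$ minors of columns of degrees $d-1,1,2d-3$, hence of degree $\le 3d-3$, when it is attained on the sphere $\{\|x\|=r\}$. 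Away from $\mathrm{Crit}(f)$ this variety is one‑dimensional (it is generically finite over the $t$‑line, being the locus of critical points of $\|\nabla f\|^2$ on the fibres of $f$), and $\gamma$, along which $\nabla f\neq0$, traces a branch $\Gamma$ of it at $x^{\ast}$ not contained in $\{f=0\}$ (because $f(\gamma(t))=t$).

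By the effective B\'ezout/Heintz bound the relevant one‑dimensional variety has degree $\le (3d-3)^{n-1}$, so intersecting with the hypersurface $\{f=0\}$ gives that the contact order $\ell:=\mathrm{ord}_{x^{\ast}}\!\bigl(f|_{\Gamma}\bigr)$ satisfies $\ell\le d(3d-3)^{n-1}=\mathscr R$. Parametrising $\Gamma$ near $x^{\ast}$ by a Puiseux uniformiser $\tau$, one has $\|\gamma-x^{\ast}\|\asymp|\tau|^{\mu}$ and $t=f(\gamma)\asymp|\tau|^{\ell}$ for an integer $\mu\ge1$, hence $\|\gamma(t)-x^{\ast}\|\asymp t^{\mu/\ell}$ and $\|\gamma'(t)\|\asymp t^{\,\mu/\ell-1}$. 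Differentiating $f(\gamma(t))=t$ and applying Cauchy--Schwarz, $1=\langle\nabla f(\gamma(t)),\gamma'(t)\rangle\le\|\nabla f(\gamma(t))\|\cdot\|\gamma'(t)\|$, so $m(t)^{1/2}=\|\nabla f(\gamma(t))\|\gtrsim t^{\,1-\mu/\ell}$, and therefore $q\le 2(1-\mu/\ell)\le 2(1-1/\ell)\le 2(1-1/\mathscr R)=2\theta$, which is the required estimate.

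I expect the genuine difficulty to be making the degree count deliver exactly $\mathscr R(n,d)=d(3d-3)^{n-1}$: one must choose the auxiliary polar variety so that its defining equations have degree precisely $3d-3$ and invoke the sharp effective‑B\'ezout bound on the degree of its one‑dimensional part, so that no spurious factors creep in — this is the quantitative heart of the D'Acunto--Kurdyka estimate. The surrounding steps are routine but need care: that the minimum defining $m(t)$ is attained and positive and that $q>0$; handling the possibility that the extremal point runs to the bounding sphere or escapes to infinity (which calls either for a more robust choice of the extremal family or a projective version of the B\'ezout step); that $\gamma$ genuinely lies in the one‑dimensional part of the polar variety even when $\mathrm{Crit}(f)$ through $x^{\ast}$ is positive‑dimensional; and the passage between the arc parametrisation by $t$ and a Puiseux uniformiser of the branch. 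As a sanity check, for $n=1$ one has $f(x)=x^{k}u(x)$ with $u(0)\neq0$ and $k\le d$, so $\|\nabla f(x)\|\asymp|x|^{k-1}\asymp|f(x)|^{1-1/k}$, confirming the exponent $1-1/d=1-1/\mathscr R(1,d)$.
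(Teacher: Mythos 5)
First, note that the paper itself contains no proof of Theorem \ref{GradientInequality}: it is quoted from D'Acunto and Kurdyka \cite{Acunto2005}, so your attempt must be measured against their original argument. Your sketch does reproduce its general shape --- a valley (talweg) arc selected on the level sets, an algebraic curve of controlled degree containing it, a B\'ezout-type bound on the order of contact with $\{f=0\}$, and a Puiseux/Cauchy--Schwarz comparison of orders along the arc --- and the peripheral steps (reduction to the function $m(t)$, growth dichotomy, the estimate $1=\langle\nabla f(\gamma(t)),\gamma'(t)\rangle\le\|\nabla f(\gamma(t))\|\,\|\gamma'(t)\\|$) are sound.

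The genuine gap is exactly the step you flag as the ``quantitative heart'', and it is not a mere refinement: your claim that $W'=\{x:\mathrm{rank}\,[\nabla f(x)\mid \mathrm{Hess}f(x)\,\nabla f(x)]\le 1\}$ is one-dimensional away from $\mathrm{Crit}(f)$ is false in general, and with it the whole degree count collapses. For $f(x)=\|x\|^2$ (or any $f$ for which $\|\nabla f\|^2$ is constant on large portions of the level sets) one has $\mathrm{Hess}f(x)\,\nabla f(x)$ parallel to $\nabla f(x)$ everywhere, so $W'=\mathbb{R}^n$; the minimizing arc $\gamma$ then sits inside a component of dimension greater than one, there is no ``one-dimensional part'' of bounded degree containing it, and the contact-order bound $\ell\le d(3d-3)^{n-1}$ does not follow from B\'ezout as stated. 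The same difficulty affects your boundary variant on the sphere. The actual proof in \cite{Acunto2005} resolves precisely this point by a further generic selection inside the talweg (in effect minimizing an auxiliary degree-one function such as $\|x-a\|^2$ for a generic point $a$ among the minimizers of $\|\nabla f\|^2$ on the level set), which places the selected arc on an algebraic set cut out by $3\times 3$ minors of a matrix with columns of degrees $d-1$, $2d-3$ and $1$, hence of degree at most $3d-3$; one must then prove that this refined locus is generically a curve, bound its degree by $(3d-3)^{n-1}$, and check that the refined selection still computes $m(t)$ up to a constant. None of this is supplied in your sketch (your interior count even yields $3d-4$ rather than $3d-3$, so the exponent $\mathscr{R}(n,d)=d(3d-3)^{n-1}$ is not reached by your construction), so the proof of the stated explicit exponent is missing at its decisive step.
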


\subsection{Subdifferentials and nonsmooth slope}
We first recall some notions of subdifferential, which are crucial for our considerations. For nonsmooth analysis we refer to the comprehensive texts \cite{Clarke1983, Mordukhovich2006, Rockafellar1998}.

\begin{definition}{\rm
Let $f \colon {\Bbb R}^n \rightarrow {\Bbb R}$ be a continuous function. For any $x \in {\Bbb R}^n$ let us define
\begin{enumerate}
  \item[(i)] The {\em Fr\'echet subdifferential} $\hat{\partial} f(x)$ of $f$ at $x \in {\Bbb R}^n$:
$$\hat{\partial} f(x) := \left \{ v \in {\Bbb R}^n \ : \ \liminf_{\| h \| \to 0, \ h \ne 0} \frac{f(x + h) - f(x) - \langle v, h \rangle}{\| h \|} \ge 0 \right \}.$$
  \item[(ii)] The {\em limiting subdifferential} ${\partial} f(x)$ of $f$ at $x$ is the set of all cluster points of sequences $\{v^k\}_{k \ge 1}$ such that $v^k\in \hat{\partial} f(x^k)$ and $(x^k, f(x^k)) \to (x, f(x))$ as $k \to \infty.$
  %\item[(iii)] The {\em singular limiting subdifferential} ${\partial}^\infty f(x)$ at $x \in {\Bbb R}^n$ is defined to be the set of all cluster points of sequences $\{t_kw^k\}_{k \ge 1}$ such that $t_k\searrow 0^+$, $w^k\in \hat{\partial} f(x^k)$ and $(x^k, f(x^k)) \to (x, f(x))$ as $k \to \infty.$
  \item[(iii)] Assume that $f$ is locally Lipschitz. By Rademacher's theorem, $f$ has at almost all points $x \in \mathbb{R}^n$ a gradient, which we denote $\nabla f(x).$ Then the {\em Clarke subdifferential} ${\partial}^\circ f(x)$ of $f$ at $x$ is defined by
$${\partial}^\circ f(x) := {\co}\{\lim \nabla f(x^k) : x^k\to x\},$$
where $\mathrm{co}(A)$ stands for the convex hull of a set $A.$
\end{enumerate}
}\end{definition}

\begin{remark}{\rm
\begin{enumerate}
  \item[(i)] It is a well-known result of variational analysis that $\hat{\partial} f(x)$ (and a fortiori $\partial f(x)$ and $\partial^\circ f(x)$) is not empty in a dense subset of the domain of $f$ (see e.g., \cite{Rockafellar1998}).

  \item[(ii)] From the above definitions, it follows clearly that for all $x \in \mathbb{R}^n,$ one has
$$\hat{\partial}f(x)\subset {\partial}f(x).$$
\item[(iii)]  If $f$ is differentiable around $x$, then we have
$${\partial}^\circ f(x) = {\partial} f(x) = \{\nabla f(x)\}.$$

\item[(iv)]  If $f$ is locally Lipschitz, the valued-set mapping ${\Bbb R}^n \rightrightarrows {\Bbb R}^n, x \mapsto {\partial}^\circ  f(x),$ is bounded on compact subsets of ${\Bbb R}^n$ and ${\partial}^\circ  f(x) = \overline{\mathrm{\co}} {\partial} f(x)$ (see e.g., \cite[Theorem 2]{Ioffe1984}).
\end{enumerate}
}\end{remark}

\begin{definition}\label{mf}{\rm
Using the limiting subdifferential $\partial f,$ we define the {\em nonsmooth slope} of $f$ by
$${\frak m}_f(x) := \inf \{ \| w \| \ : \ w \in {\partial} f(x) \}.$$
By definition, ${\frak m}_f(x) = + \infty$ whenever ${\partial} f(x) = \emptyset.$
}\end{definition}

\begin{remark}{\rm
By Tarski--Seidenberg Theorem \ref{TarskiSeidenbergTheorem}, it is not hard to show that if the function $f$ is semi-algebraic then so is ${\frak m}_f.$
}\end{remark}

The following lemma is crucial in the proof of our results since it permits to describe the Clarke subdifferential of the largest eigenvalue function.

\begin{lemma} \label{Lemma1}
Let $F \colon \Bbb R^n \rightarrow {\mathcal S}^p,\ x\mapsto F(x)=(f_{ij}(x))_{i,j=1,\ldots,p},$ be a symmetric polynomial matrix of order $p$ and let $f(x)$ be the largest eigenvalue of the matrix $F(x).$ Then the following statements hold
\begin{enumerate}
\item [(i)] $f(x) = \max_{\|v\|=1}\langle F(x)v,v\rangle$ for all $x \in \mathbb{R}^n.$
\item [(ii)] The function $f \colon \mathbb{R}^n \rightarrow \mathbb{R}, x \mapsto f(x),$ is locally Lipschitz.
\item [(iii)] The Clarke subdifferential $\partial^\circ f(x)$ at $x$ is given by
$$\mathrm{co}\Big \{\nabla_x\langle F(x)v,v\rangle \ : \ v \text{ is a unit eigenvector corresponding to } f(x) \Big \}.$$
More precisely, we have
\begin{eqnarray*}
  \partial^\circ f(x) & = & \Big \{\displaystyle\sum_{l=1}^r\lambda_l\nabla_x\langle F(x)v^l,v^l\rangle
\ : \ r\le n+1,\ \sum_{l=1}^r\lambda_l=1,\ \lambda_l\ge 0, \\
&& \quad v^1,\ldots,v^r \text{ are unit eigenvectors corresponding to } f(x) \Big \}.
\end{eqnarray*}
\end{enumerate}
\end{lemma}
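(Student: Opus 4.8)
The plan is to derive all three items from the Rayleigh--Ritz characterization of the largest eigenvalue. I would introduce the jointly polynomial function
$$g \colon \mathbb{R}^n \times \mathbb{R}^p \to \mathbb{R}, \qquad g(x, v) := \langle F(x) v, v \rangle = \sum_{i, j = 1}^p f_{ij}(x)\, v_i v_j,$$
which is $C^\infty$, so that $(x, v) \mapsto \nabla_x g(x, v)$ is continuous, in fact polynomial. For part (i), fix $x$ and apply the spectral theorem to write $F(x) = Q \diag(\lambda_1, \ldots, \lambda_p) Q^\top$ with $Q$ orthogonal and $\lambda_1 \ge \cdots \ge \lambda_p$; for a unit vector $v$, setting $u := Q^\top v$ one obtains $g(x, v) = \sum_k \lambda_k u_k^2 \le \lambda_1$, with equality precisely when $v$ lies in the eigenspace of $\lambda_1$. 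Hence $f(x) = \lambda_1 = \max_{\|v\| = 1} g(x, v)$, and the maximizers are exactly the unit eigenvectors of $F(x)$ associated with $f(x)$.

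For part (ii), I would fix a compact set $K \subseteq \mathbb{R}^n$ and set $L := \sup \{ \|\nabla_x g(x, v)\| \ : \ x \in K,\ v \in \mathbb{S}^{p - 1} \}$, which is finite by continuity of $\nabla_x g$ and compactness of $K \times \mathbb{S}^{p - 1}$. Given $x, y \in K$, pick $v \in \mathbb{S}^{p - 1}$ with $f(x) = g(x, v)$; the mean value inequality along the segment $[x, y]$ gives $f(x) = g(x, v) \le g(y, v) + L \|x - y\| \le f(y) + L \|x - y\|$, and exchanging the roles of $x$ and $y$ yields $|f(x) - f(y)| \le L \|x - y\|$, so $f$ is locally Lipschitz.

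For part (iii), set $C(x) := \{ v \in \mathbb{S}^{p - 1} \ : \ g(x, v) = f(x) \}$; by part (i) this is exactly the set of unit eigenvectors of $F(x)$ for the eigenvalue $f(x)$, a nonempty compact set (a sphere inside the corresponding eigenspace). Each $g(\cdot, v)$ is $C^1$, hence Clarke-regular, so the calculus rule for the Clarke subdifferential of a pointwise maximum of a $C^1$ family indexed by a compact parameter set (see \cite{Clarke1983}) applies and yields
$$\partial^\circ f(x) = \overline{\co} \{ \nabla_x g(x, v) \ : \ v \in C(x) \}.$$
Since $C(x)$ is compact and $v \mapsto \nabla_x g(x, v)$ is continuous, $\{ \nabla_x g(x, v) : v \in C(x) \}$ is a compact subset of $\mathbb{R}^n$, and the convex hull of a compact subset of $\mathbb{R}^n$ is compact, hence closed, by Carath\'eodory's theorem; so the closure is superfluous and the first asserted formula follows. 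Carath\'eodory's theorem also shows that every element of this convex hull is a convex combination of at most $n + 1$ of the vectors $\nabla_x g(x, v^l) = \nabla_x \langle F(x) v^l, v^l \rangle$ with $v^l \in C(x)$, which is precisely the refined representation with $r \le n + 1$.

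The one genuine obstacle is establishing the equality — not merely the inclusion ``$\subseteq$'' — in the maximum rule. The inclusion $\partial^\circ f(x) \subseteq \overline{\co} \{ \nabla_x g(x, v) : v \in C(x) \}$ is the routine half of Clarke's theorem and requires nothing beyond local Lipschitzness; the reverse inclusion (equivalently, the Clarke-regularity of $f$ at $x$) is where one must use that each $g(\cdot, v)$ is smooth, hence regular, and then invoke the corresponding statement in \cite{Clarke1983}. It is also worth recording along the way that $C(x) \ne \emptyset$, because the maximum in part (i) is attained by compactness of $\mathbb{S}^{p - 1}$; this is what guarantees $\partial^\circ f(x) \ne \emptyset$, in accordance with part (ii).
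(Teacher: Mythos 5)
Your proposal is correct and follows essentially the same route as the paper: the paper treats (i) as straightforward, derives (ii) and the first formula in (iii) from Clarke's theorem on generalized gradients of max functions (\cite[Theorem~2.1]{Clarke1975}), and obtains the refined representation with $r \le n+1$ from Carath\'eodory's theorem, exactly as you do via the max-rule in \cite{Clarke1983} plus Carath\'eodory. You merely spell out the elementary Rayleigh--Ritz and equi-Lipschitz details that the paper leaves implicit (for the Lipschitz estimate, just take $K$ to be a closed ball so the segment $[x,y]$ stays in $K$), so no genuine difference or gap remains.
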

\begin{proof}
(i) is straightforward, and (ii) is a direct consequence of (i) and \cite[Theorem 2.1]{Clarke1975} (see also \cite{Bronstein1979, Kurdyka2008, Wakabayashi1986}).

(iii) The first statement is an immediate consequence of \cite[Theorem~2.1]{Clarke1975}. The second follows form the first and Carath\'eodory's theorem \cite{Caratheodory1911} which says that if a point $z$ belongs to the convex hull $\co(A)$ of a set $A \subset\Bbb R^n$, then $z\in \co(B)$ for some $B \subset A$ and $\mathrm{card} (B) \le n + 1,$ where $\mathrm{card} (B)$ denotes the cardinal of $B$.
\end{proof}

\subsection{Newton polyhedra}

In many problems, the combinatorial information of polynomial maps are important and can be found in their Newton polyhedra. In this subsection, we recall the definition of Newton polyhedra.

Let us begin with some notations which will be used throughout this work. We consider a fixed coordinate system $x_1, \ldots, x_n \in {\Bbb R}^n.$ Let $J \subset \{1, \ldots, n\},$ then we define
$${\Bbb R}^J := \{x \in {\Bbb R}^n \ : \ x_j = 0, \textrm{ for all } j \not \in J\}.$$

We denote by ${\Bbb R}_{\ge 0}$ the set of non-negative real numbers. We also set ${\Bbb Z}_{\ge 0} := {\Bbb R}_{\ge 0} \cap {\Bbb Z}.$ If $\kappa = (\kappa_1, \ldots, \kappa_n) \in {\Bbb Z}_{\ge 0}^n,$ we denote by $x^\kappa$ the monomial
$x_1^{\kappa_1} \cdots x_n^{\kappa_n}$ and by $| \kappa|$ the sum $\kappa_1 + \cdots + \kappa_n.$

\begin{definition} {\rm
A subset $\Gamma \subset {\Bbb R}^n_{\ge 0}$ is said to be a {\em Newton polyhedron at infinity,}
if there exists some finite subset $A \subset {\Bbb Z}^n_{\ge 0}$  such that $\Gamma$ is equal to the convex hull in ${\Bbb R}^n$ of $A \cup \{0\}.$ Then we say that $\Gamma$ is the Newton polyhedron at infinity determined by $A$ and we write $\Gamma = \Gamma(A).$ We say that a Newton polyhedron at infinity $\Gamma \subset {\Bbb R}^n_{\ge 0}$ is {\em convenient} if it intersects each coordinate axis at a point different from the origin, that is, if for any $s \in \{1, \ldots, n\}$  there exists some integer $m_s > 0$ such that $m_s e_s \in \Gamma,$ where $\{e_1, \ldots, e_n\}$ denotes the canonical basis in ${\Bbb R}^n.$
}\end{definition}

Given a Newton polyhedron at infinity $\Gamma \subset {\Bbb R}^n_{\ge 0}$ and a vector $q \in {\Bbb R}^n,$ we define
\begin{eqnarray*}
d(q, \Gamma) &:=& \min \{\langle q, \kappa \rangle \ : \ \kappa \in \Gamma\}, \\
\Delta(q, \Gamma) &:=& \{\kappa \in \Gamma \ : \ \langle q, \kappa \rangle = d(q, \Gamma) \}.
\end{eqnarray*}
We say that a subset $\Delta$ of $\Gamma$ is a {\em face} of $\Gamma$ if there exists a vector $q \in {\Bbb R}^n$ such that $\Delta = \Delta(q, \Gamma).$ The dimension of a face $\Delta$ is defined as the minimum of the dimensions of the affine subspaces containing $\Delta.$ The faces of $\Gamma$ of dimension $0$ are called the {\em vertices} of $\Gamma.$ We denote by $\Gamma_\infty$ the set of the faces of $\Gamma$ which do not contain the origin $0$ in ${\Bbb R}^n.$

\begin{remark}{\rm
By definition, for each face $\Delta$ of $\Gamma_\infty$ there exists a vector $q = (q_1, \ldots, q_n) \in {\Bbb R}^n$ with $\min_{j = 1, \ldots, n} q_j < 0$ such that $\Delta = \Delta(q, \Gamma).$
}\end{remark}

Let $\Gamma_1, \ldots, \Gamma_p$ be a collection of $p$ Newton polyhedra at infinity in ${\Bbb R}^n_{\ge 0},$ for some $p \ge 1.$ The {\em Minkowski
sum} of $\Gamma_1, \ldots, \Gamma_p$ is defined as the set
$$\Gamma_1+ \cdots + \Gamma_p = \{\kappa^1 + \cdots + \kappa^p \ : \ \kappa^i \in \Gamma_i, \textrm{ for all } i = 1, \ldots, p\}.$$
By definition, $\Gamma_1+ \cdots + \Gamma_p$ is again a Newton polyhedron at infinity. Moreover, by applying the definitions given above, it is easy to check that
\begin{eqnarray*}
d(q, \Gamma_1 + \cdots + \Gamma_p) &=& d(q, \Gamma_1) + \cdots + d(q, \Gamma_p), \\
\Delta(q, \Gamma_1 + \cdots + \Gamma_p) &=& \Delta(q, \Gamma_1) + \cdots + \Delta(q, \Gamma_p),
\end{eqnarray*}
for all $q \in \mathbb{R}^n.$ As an application of these relations, we obtain the following lemma whose proof can be found in \cite{Dinh2013}.

\begin{lemma} \label{Lemma2}
{\rm (i)} Assume that $\Gamma$ is a convenient Newton polyhedron at infinity. Let $\Delta$ be a face of $\Gamma$ and let $q=(q_1, \ldots, q_n)\in{\Bbb R}^n$ such that $\Delta = \Delta(q, \Gamma).$ Then the following conditions are equivalent:
\begin{enumerate}
\item[{\rm (i1)}] $\Delta\in\Gamma_\infty$;
\item[{\rm (i2)}] $d(q,\Gamma)<0$;
\item[{\rm (i3)}] $\min_{j = 1, \ldots, n} q_j < 0$.
\end{enumerate}
{\rm (ii)} Assume that $\Gamma_1,\ldots, \Gamma_p$ are some Newton polyhedra at infinity. Let $\Delta$ be a face of the Minkowski sum $\Gamma := \Gamma_1+ \cdots + \Gamma_p.$ Then the following statements hold:
\begin{enumerate}
  \item[{\rm (ii1)}] There exists a unique collection of faces $\Delta_1, \ldots, \Delta_p$ of $\Gamma_1, \ldots, \Gamma_p,$ respectively, such that $$\Delta = \Delta_1 + \cdots + \Delta_p.$$
  \item[{\rm (ii2)}] If $\Gamma_1,\ldots, \Gamma_p$ are convenient, then $\Gamma_\infty \subset \Gamma_{1, \infty}+ \cdots + \Gamma_{p, \infty}.$
\end{enumerate}
\end{lemma}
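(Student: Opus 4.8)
The plan is to obtain all parts of the lemma directly from the two Minkowski-sum identities recorded just above the statement --- $d(q,\Gamma_1+\cdots+\Gamma_p)=\sum_{l}d(q,\Gamma_l)$ and $\Delta(q,\Gamma_1+\cdots+\Gamma_p)=\sum_{l}\Delta(q,\Gamma_l)$ --- together with the two structural facts that every Newton polyhedron at infinity contains the origin and lies in $\mathbb{R}^n_{\ge 0}$. For part (i), I would write $\Delta=\Delta(q,\Gamma)$ and note that, since $0\in\Gamma$, $d(q,\Gamma)=\min_{\kappa\in\Gamma}\langle q,\kappa\rangle\le\langle q,0\rangle=0$, with $0\in\Delta(q,\Gamma)$ precisely when this minimum equals $0$; this yields (i1)$\Leftrightarrow$(i2) at once. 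If (i2) holds, some $\kappa\in\Gamma$ has $\langle q,\kappa\rangle<0$, and as $\kappa\ge0$ some coordinate $q_j$ is negative, i.e.\ (i3); conversely, if $q_s<0$, convenience of $\Gamma$ supplies $m_se_s\in\Gamma$ with $m_s>0$, so $d(q,\Gamma)\le m_sq_s<0$, i.e.\ (i2). Only this last implication uses convenience.

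For part (ii1), existence is immediate: writing $\Delta=\Delta(q,\Gamma)$, the second Minkowski identity gives $\Delta=\Delta(q,\Gamma_1)+\cdots+\Delta(q,\Gamma_p)$, and $\Delta_l:=\Delta(q,\Gamma_l)$ is a face of $\Gamma_l$ by definition. For uniqueness I would take any decomposition $\Delta=\Delta_1'+\cdots+\Delta_p'$ into faces of the $\Gamma_l$, pick arbitrary $\kappa^l\in\Delta_l'$, and use that $\sum_l\kappa^l\in\Delta=\Delta(q,\Gamma)$ forces $\sum_l\langle q,\kappa^l\rangle=d(q,\Gamma)=\sum_l d(q,\Gamma_l)$; since $\langle q,\kappa^l\rangle\ge d(q,\Gamma_l)$ for each $l$ (because $\kappa^l\in\Gamma_l$), every one of these inequalities is an equality, so $\kappa^l\in\Delta(q,\Gamma_l)$, and hence $\Delta_l'\subseteq\Delta(q,\Gamma_l)$ for all $l$. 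To turn these inclusions into equalities I would compare support functions $h_A(u):=\max_{x\in A}\langle u,x\rangle$: from $\Delta_1'+\cdots+\Delta_p'=\Delta(q,\Gamma_1)+\cdots+\Delta(q,\Gamma_p)$ and $h_{A+B}=h_A+h_B$ one gets $\sum_l h_{\Delta_l'}=\sum_l h_{\Delta(q,\Gamma_l)}$, whereas $\Delta_l'\subseteq\Delta(q,\Gamma_l)$ gives $h_{\Delta_l'}\le h_{\Delta(q,\Gamma_l)}$; a finite sum of nonnegative functions that vanishes identically vanishes term by term, so $h_{\Delta_l'}=h_{\Delta(q,\Gamma_l)}$, whence $\Delta_l'=\Delta(q,\Gamma_l)$ since a compact convex set is determined by its support function.

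For part (ii2), given $\Delta\in\Gamma_\infty$, write $\Delta=\Delta(q,\Gamma)$ with $0\notin\Delta$. As in part (i), $0\notin\Delta$ together with $0\in\Gamma$ forces $d(q,\Gamma)<0$, hence some $\kappa\in\Gamma$ has $\langle q,\kappa\rangle<0$, hence some coordinate $q_s$ is negative. This is where the convenience of \emph{every} summand is used: for each $l$, convenience of $\Gamma_l$ gives $m_s^{(l)}e_s\in\Gamma_l$ with $m_s^{(l)}>0$, so $d(q,\Gamma_l)\le m_s^{(l)}q_s<0$, i.e.\ $0\notin\Delta(q,\Gamma_l)$, i.e.\ $\Delta(q,\Gamma_l)\in\Gamma_{l,\infty}$. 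Since the second Minkowski identity writes $\Delta$ as $\Delta(q,\Gamma_1)+\cdots+\Delta(q,\Gamma_p)$, which is a sum of one face from each $\Gamma_{l,\infty}$, this is precisely the inclusion $\Gamma_\infty\subseteq\Gamma_{1,\infty}+\cdots+\Gamma_{p,\infty}$.

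The step I expect to be the main obstacle is the uniqueness in (ii1): everything else is a one-line verification from the two Minkowski identities and the facts $0\in\Gamma\subseteq\mathbb{R}^n_{\ge0}$, but passing from $\Delta_l'\subseteq\Delta(q,\Gamma_l)$ to $\Delta_l'=\Delta(q,\Gamma_l)$ genuinely requires the cancellativity of Minkowski addition of polytopes (equivalently, that a compact convex set is recovered from its support function). A secondary point worth emphasizing is that in (ii2) it does not suffice that the face $\Delta$ of the Minkowski sum avoids the origin: one really needs that the same functional $q$, which must then have a strictly negative coordinate, already forces $d(q,\Gamma_l)<0$ for \emph{all} $l$ simultaneously, and this is exactly why convenience is imposed on each $\Gamma_l$ rather than merely on $\Gamma$.
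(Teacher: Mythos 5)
Your proposal is correct. The paper gives no proof of this lemma at all --- it derives it ``as an application'' of the two displayed Minkowski-sum identities and refers to \cite{Dinh2013} for details --- and your argument is exactly the route those identities suggest: the equivalences in (i) and the inclusion in (ii2) follow from $0\in\Gamma\subset\mathbb{R}^n_{\ge 0}$ plus convenience, and existence in (ii1) is the identity $\Delta(q,\Gamma)=\Delta(q,\Gamma_1)+\cdots+\Delta(q,\Gamma_p)$. The only step requiring an extra idea, the uniqueness in (ii1), you handle correctly: the averaging argument gives $\Delta_l'\subseteq\Delta(q,\Gamma_l)$, and the support-function cancellation $h_{A+B}=h_A+h_B$ for nonempty compact convex sets upgrades these inclusions to equalities, which is a standard and complete way to finish.
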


Let $f \colon {\Bbb R}^n \to {\Bbb R}$ be a polynomial function. Suppose that $f$ is written as $f = \sum_{\kappa} a_\kappa x^\kappa.$ Then
the support of $f,$ denoted by $\mathrm{supp}(f),$ is defined as the set of those $\kappa  \in {\Bbb Z}_{\ge 0}^n$ such that $a_\kappa \ne 0.$ We denote the set $\Gamma(\mathrm{supp}(f))$ by $\Gamma(f).$ This set will be called the {\em Newton polyhedron at infinity} of $f.$ The polynomial $f$ is said to be {\em convenient} if $\Gamma(f)$ is convenient. If $f \equiv 0,$ then we set $\Gamma(f) = \emptyset.$ Note that, if $f$ is convenient, then for each nonempty subset $J$ of $\{1, \ldots, n\},$ we have $\Gamma(f) \cap {\Bbb R}^J = \Gamma(f|_{{\Bbb R}^J}).$ The {\em Newton boundary at infinity} of $f$, denoted by $\Gamma_{\infty}(f),$ is defined as the set of the faces of $\Gamma(f)$ which do not contain the origin $0$ in ${\Bbb R}^n.$

Let us fix a face $\Delta$ of $\Gamma_\infty(f).$  We define the {\em principal part of $f$ at infinity with respect to $\Delta,$} denoted by $f_\Delta,$ as the sum of the terms $a_\kappa x^\kappa$ such that $\kappa \in \Delta.$

\subsection{Non-degeneracy at infinity}

In \cite{Khovanskii1978,Kouchnirenko1976} (see also \cite{Dinh2013,Dinh2014,HaHV2013}), the authors introduced some conditions of non-degeneracy for polynomial maps in terms of Newton polyhedra. Moreover, some conditions of non-degeneracy for matrices were also given by \cite{Esterov2007}. We present here a new condition of non-degeneracy at infinity for symmetric polynomial matrices. This condition implies the condition in \cite{Khovanskii1978,Kouchnirenko1976} when the matrices considered are diagonal.

Let $F \colon \Bbb R^n \rightarrow {\mathcal S}^p,\ x\mapsto F(x)=(f_{ij}(x))_{i,j=1,\ldots,p},$ be a symmetric polynomial matrix. Let $\Gamma(F)$ denote the Minkowski sum $\sum_{i,j=1,\ldots,p}\Gamma(f_{ij})$ and we denote by $\Gamma_\infty(F)$ the set of faces of $\Gamma(F)$ which do not contain the origin $0$ in ${\Bbb R}^n.$ Let $\Delta$ be a face of the $\Gamma(F).$ According to Lemma~\ref{Lemma2}, we have the following decomposition $\Delta = \sum_{i,j=1,\ldots,p}\Delta_{ij}$ where $\Delta_{ij}$ is a face of $\Gamma(f_{ij}),$ for all $i,j = 1, \ldots, p.$ We denote by $F_\Delta$ the symmetric polynomial matrix $(f_{ij, \Delta_{ij}})_{i,j=1,\ldots,p}  \colon {\Bbb R}^n \rightarrow {\mathcal S}^p.$

\begin{definition}{\rm
We say that the polynomial matrix $F(x) = (f_{ij}(x))_{i,j=1,\ldots,p}$ is {\em non-degenerate at infinity} if and only if for any face $\Delta$ of $\Gamma_\infty(F)$ and for all $x \in ({\Bbb R} \setminus \{0\})^n,$ we have
\begin{equation*}
\left\{
\begin{array}{llll}
\Omega = (\omega_{ij})_{p \times p} \in {\mathcal S}^p, \omega_{ii} \ge 0 \textrm{ for } i = 1, \ldots, p, \ \tr(\Omega)=1, \\
\tr\left(\Omega\displaystyle\frac{\partial F_\Delta}{\partial x_k}(x)\right)=0 \ \text{ for }k=1,\ldots,n
\end{array}\right\}  \Rightarrow \tr(\Omega F_\Delta(x)) \ne 0.
\end{equation*}
}\end{definition}

\begin{remark}\label{Relax}{\rm
Note that the condition $\tr(\Omega)=1$ in the above definition can be replaced by $\tr(\Omega)\ne 0$.
}\end{remark}

%\begin{remark}{\rm\color{red} If the matrix $F(x)$ is diagonal for all $x\in\Bbb R^n$, say $F(x)=\diag(f_1(x),\ldots,f_p(x))$. Let $\Omega=\diag(\omega_1,\ldots,\omega_p)$ with $\sum_{i=1}^p\omega_i=1$. Then for $k=1,\ldots,n$, we have
%$$\Omega\displaystyle\frac{\partial F_\Delta}{\partial x_k}(x)=\diag\left(\omega_1\displaystyle\frac{\partial f_{1,\Delta_1}}{\partial x_k}(x),\ldots,\frac{\partial f_{p,\Delta_p}}{\partial x_k}(x)\right).$$
%So if $\tr\left(\Omega\displaystyle\frac{\partial F_\Delta}{\partial x_k}(x)\right)=0$ for $k=1,\ldots,n$, then
%$\sum_{i=1}^p\omega_i\nabla f_{i,\Delta_i}(x)=0.$ Consequently, $\nabla f_{1,\Delta_1}(x),\ldots,\nabla f_{p,\Delta_p}(x)$ is linear independent. Moreover if $\tr(\Omega F_\Delta(x))\not=0$, it implies that $F_\Delta(x)\not=0$. So the condition of non-degeneracy at infinity comes back to the condition of non-degenerate of Khovanskii (see \cite{Khovanskii1978}, \cite{Kouchnirenko1976}).
%}\end{remark}

\section{Nonsmooth \L ojasiewicz gradient inequality for the largest eigenvalue function} \label{LojasiewiczGradient}

In this section, we prove Theorem \ref{NonSmoothTheorem} which establishes a nonsmooth version of \L ojasiewicz gradient inequality with an explicit exponent for the largest eigenvalue function. 

Note that (\ref{Eq5b}) follows trivially from (\ref{Eq5}) since $\partial f(x)\subset\partial^\circ f(x)$, so it remains to prove (\ref{Eq5}). First of all, for each $x \in \mathbb{R}^n,$ we denote by $E(x)$ the set of unit eigenvectors of $F(x)$ corresponding to the eigenvalue $f(x),$ i.e.,
$$E(x) := \Big\{v \in \mathbb{S}^{p - 1} \ : \ F(x) v - f(x)v = 0 \Big\}.$$
Clearly, $E(x)$ is a compact set. Furthermore, we have the following stability result of the set of unit eigenvectors $E(x)$:

\begin{lemma} \label{Lemma3}
Let $\bar{x} \in \mathbb{R}^n.$ For each $\epsilon > 0$ there exists a constant $c > 0$ such that
$$E(x) \subset  E(\bar{x}) + c\|x - \bar{x}\|^{\frac{1}{\mathscr{R}(p, 4)}} \mathbb{B}^n \quad \textrm{ for all } \quad x \in \mathbb{B}^n(\bar{x}, \epsilon).$$
\end{lemma}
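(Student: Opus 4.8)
The plan is to deduce the inequality from the \L ojasiewicz gradient inequality with explicit exponent of D'Acunto and Kurdyka (Theorem~\ref{GradientInequality}), applied to a single auxiliary polynomial in $p$ variables of degree at most $4$ --- the ``$4$'' in $\mathscr{R}(p,4)$ being exactly its degree. Fix $\bar x\in\mathbb{R}^n$ and $\epsilon>0$. If $F(\bar x)$ is a scalar matrix, then $E(\bar x)=\mathbb{S}^{p-1}\supseteq E(x)$ for every $x$ and the inclusion is trivial; so assume $F(\bar x)$ is not scalar. Consider
\[
g\colon\mathbb{R}^p\to\mathbb{R},\qquad g(v):=f(\bar x)\,\langle v,v\rangle^2-\langle F(\bar x)v,v\rangle\,\langle v,v\rangle .
\]
By the variational formula in Lemma~\ref{Lemma1}(i) we have $\langle F(\bar x)v,v\rangle\le f(\bar x)\|v\|^2$ for every $v$, hence $g\ge 0$ on $\mathbb{R}^p$ and $g^{-1}(0)=\{0\}\cup W$, where $W:=\ker\bigl(F(\bar x)-f(\bar x)I\bigr)$ is the eigenspace of $F(\bar x)$ for its largest eigenvalue. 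Moreover $g(v)=f(\bar x)-\langle F(\bar x)v,v\rangle$ whenever $\|v\|=1$, so $g^{-1}(0)\cap\mathbb{S}^{p-1}=E(\bar x)$.

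The next step is a \L ojasiewicz inequality for $g$. Applying Theorem~\ref{GradientInequality} at each point of the compact set $g^{-1}(0)\cap\mathbb{B}^p(0,2)$, patching the local gradient estimates by a covering argument, and integrating $-\nabla g/\|\nabla g\|$ to bound the lengths of gradient trajectories, one obtains a constant $c_1>0$ such that
\[
\mathrm{dist}\bigl(v,g^{-1}(0)\bigr)\ \le\ c_1\,g(v)^{\frac{1}{\mathscr{R}(p,4)}}\qquad\text{for all }v\in\mathbb{B}^p(0,2)
\]
(using that $\mathscr{R}(p,\cdot)$ is nondecreasing, so $\mathscr{R}(p,\deg g)\le\mathscr{R}(p,4)$). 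For $v\in\mathbb{S}^{p-1}$ one has $\mathrm{dist}(v,g^{-1}(0))=\mathrm{dist}(v,W)$, and a short computation (replacing $v$ by $P_Wv/\|P_Wv\|\in W\cap\mathbb{S}^{p-1}=E(\bar x)$, with $P_W$ the orthogonal projection onto $W$, and treating $P_Wv=0$ separately) gives $\mathrm{dist}(v,E(\bar x))\le\sqrt{2}\,\mathrm{dist}(v,W)$. Hence $\mathrm{dist}(v,E(\bar x))\le\sqrt{2}\,c_1\,g(v)^{1/\mathscr{R}(p,4)}$ for all $v\in\mathbb{S}^{p-1}$.

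It remains to bound $g$ on $E(x)$. For $x\in\mathbb{B}^n(\bar x,\epsilon)$ and $v\in E(x)$ we have $\|v\|=1$ and $F(x)v=f(x)v$, whence
\[
g(v)=f(\bar x)-\langle F(\bar x)v,v\rangle=\bigl(f(\bar x)-f(x)\bigr)+\bigl\langle(F(x)-F(\bar x))v,v\bigr\rangle .
\]
By Lemma~\ref{Lemma1}(i), $|f(\bar x)-f(x)|$ and $|\langle(F(x)-F(\bar x))v,v\rangle|$ are both at most the operator norm $\|F(x)-F(\bar x)\|$; and since $x\mapsto F(x)$ has polynomial entries it is Lipschitz on the compact ball $\mathbb{B}^n(\bar x,\epsilon)$, so $\|F(x)-F(\bar x)\|\le L\|x-\bar x\|$ for some $L>0$. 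Therefore $g(v)\le 2L\|x-\bar x\|$, and combining with the previous step gives $\mathrm{dist}(v,E(\bar x))\le\sqrt{2}\,c_1(2L)^{1/\mathscr{R}(p,4)}\,\|x-\bar x\|^{1/\mathscr{R}(p,4)}$, i.e. the asserted inclusion with $c:=\sqrt{2}\,c_1(2L)^{1/\mathscr{R}(p,4)}$.

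I expect the only nonroutine point to be the second step: upgrading the \emph{local} gradient inequality (Theorem~\ref{GradientInequality}) to a distance inequality with the same explicit exponent, valid \emph{uniformly} on the compact --- here connected --- zero set $g^{-1}(0)\cap\mathbb{B}^p(0,2)$; this is the classical passage from the \L ojasiewicz gradient inequality to the \L ojasiewicz inequality, combined with a compactness argument. I would also note that for this particular $g$ one can bypass all \L ojasiewicz machinery: diagonalising $F(\bar x)$ shows $g(v)\ge\delta\,\mathrm{dist}(v,W)^2$ on $\mathbb{S}^{p-1}$, where $\delta>0$ is the distance between the two largest distinct eigenvalues of $F(\bar x)$, which already yields the conclusion with the sharper exponent $\tfrac12$ (and hence the stated bound, since $\mathscr{R}(p,4)\ge 4$ and one may assume $\epsilon\le 1$).
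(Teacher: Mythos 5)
Your argument is correct and, in its main route, structurally parallel to the paper's: both proofs reduce the stability of $E(\cdot)$ to a \L ojasiewicz inequality with exponent $\frac{1}{\mathscr{R}(p,4)}$ for a degree-$4$ polynomial in the $p$ eigenvector variables whose zero set on the sphere is $E(\bar x)$, followed by a Lipschitz estimate in $x$. The paper works with $\Phi(x,v):=(\|v\|^2-1)^2+\|F(x)v-f(x)v\|^2$, notes $E(x)=\{v\ :\ \Phi(x,v)=0\}$, quotes the classical \L ojasiewicz inequality with explicit exponent (citing \cite{Kurdyka2014,Pham2012}) for $v\mapsto\Phi(\bar x,v)$ on $\mathbb{S}^{p-1}$, and concludes from $\Phi(x,v)=0$ for $v\in E(x)$ together with $|\Phi(x,v)-\Phi(\bar x,v)|\le L\|x-\bar x\|$. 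You instead use $g(v)=f(\bar x)\|v\|^4-\langle F(\bar x)v,v\rangle\|v\|^2$ and propose to manufacture the needed distance inequality from the gradient inequality of Theorem~\ref{GradientInequality} via trajectory lengths; that passage is standard (it is the same mechanism the paper employs in the proof of Theorem~\ref{CompactErrorBound}), but it is exactly the step the paper outsources to a citation, so you could simply invoke the same references and delete your one ``nonroutine point.'' Your closing remark is the genuinely different, and in fact cleaner, route: the spectral-gap estimate $g(v)\ge\delta\,\mathrm{dist}(v,W)^2$ on $\mathbb{S}^{p-1}$ combined with your bound $g(v)\le 2L\|x-\bar x\|$ for $v\in E(x)$ proves the lemma with the sharper exponent $\tfrac12$ and no \L ojasiewicz machinery at all; this is fully adequate here, since the exponent in Lemma~\ref{Lemma3} is only used to shrink $\epsilon$ in the proof of Theorem~\ref{NonSmoothTheorem} and never enters the final exponents. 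Two trivial touch-ups: $g^{-1}(0)=W$ (since $0\in W$), and rather than ``assume $\epsilon\le 1$'' just observe that $\mathrm{dist}(v,E(\bar x))\le 2$ for unit vectors, so the inclusion is automatic (after adjusting $c$) when $\|x-\bar x\|\ge 1$.
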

\begin{proof}
Consider the polynomial function
$$\Phi \colon \mathbb{R}^n \times \mathbb{R}^p \rightarrow \mathbb{R}, \quad (x, v) \mapsto \Phi(x, v) := \left ( \sum_{i = 1}^p v_i^2 - 1 \right )^2 + \left \| F(x) v - f(x) v \right\|^2 .$$
By definition, we have that $\Phi(x, v) \ge 0$ for all $(x, v) \in \mathbb{R}^n \times \mathbb{R}^{p}$ and that
\begin{eqnarray*}
E(x)
&=& \{v \in \mathbb{S}^{p - 1} \ : \ F(x) v - f(x)v = 0\} \\
&=& \{v \in \mathbb{R}^p \ : \ \sum_{i = 1}^p v_i^2 - 1 = 0 \ \textrm{ and } \ F(x) v - f(x)v = 0\} \\
&=& \{v \in \mathbb{R}^{p} \ : \ \Phi(x, v) =  0\}.
\end{eqnarray*}
Since the sphere $\mathbb{S}^{p -1}$ is a compact set, it follows from the \L ojasiewicz inequality (see, for example, \cite{Kurdyka2014,Pham2012}) that there is a constant $c > 0$ such that
\begin{eqnarray*}
c\, \mathrm{dist}(v , E(\bar{x})) & \le & [\Phi(\bar{x}, v)]^{\frac{1}{\mathscr{R}(p, 4)}} \quad \textrm{ for all } \quad v \in \mathbb{S}^{p -1}.
 \end{eqnarray*}

On the other hand, it is clear that the function $\Phi$ is locally Lipschitz, and so it is globally Lipschitz on the compact set $\mathbb{B}^n(\bar{x}, \epsilon) \times \mathbb{S}^{p -1}.$ Hence, there exists a constant $L > 0$ such that
 \begin{eqnarray*}
|\Phi(x, v) - \Phi(\bar{x}, v)|  & \le & L \|x - \bar{x}\| \quad \textrm{ for all } \quad (x, v) \in \mathbb{B}^n(\bar{x}, \epsilon) \times \mathbb{S}^{p -1}.
\end{eqnarray*}

Let $x \in \mathbb{B}^n(\bar{x}, \epsilon)$ and take an arbitrary  $v \in E(x).$ Then $\Phi(x, v) = 0,$ and therefore,
 \begin{eqnarray*}
c\, \mathrm{dist}(v , E(\bar{x}))
& \le & [\Phi(\bar{x}, v)]^{\frac{1}{\mathscr{R}(p, 4)}} \\
& = & |\Phi(x, v) - \Phi(\bar{x}, v)|^{\frac{1}{\mathscr{R}(p, 4)}} \\
& \le & L^{\frac{1}{\mathscr{R}(n, 4)}} \|x - \bar{x}\|^{\frac{1}{\mathscr{R}(p, 4)}}.
 \end{eqnarray*}
This implies immediately the lemma.
\end{proof}

For simplicity, we will write $g(x, v) := \langle F(x)v, v\rangle.$ For each integer $r \in \{1, \ldots, n + 1\},$ we define the function
$$G_r \colon \mathbb{R}^n \times \mathbb{R}^{r - 1} \times \mathbb{R}^{rp} \to \Bbb R, \quad (x, \lambda, v^1, \ldots, v^r) \mapsto
G_r(x, \lambda, v^1, \ldots, v^r),$$
by
$$G_r(x,\lambda, v^1, \ldots, v^r) :=  \displaystyle\sum_{l=1}^{r-1}\lambda_lg(x,v^l)+\left(1-\sum_{l=1}^{r-1}\lambda_l\right)g(x,v^r),$$
where $\lambda := (\lambda_1,\ldots,\lambda_{r-1}) \in \mathbb{R}^{r - 1}$ and $v^l :=(v_1^l,\ldots,v_p^l) \in \mathbb{R}^{p},\ l=1,\ldots,r.$ Clearly, $G$ is a polynomial of $n + r - 1 + rp$ variables with degree at most $d + 3.$ Define further the set ${\mathbf P} \subset\R^{r - 1}$ by
$$\mathbf{P} := \Big\{\lambda := (\lambda_1,\ldots,\lambda_{r-1})  \in\mathbb{R}^{r - 1} \ : \ \sum_{j=1}^{r - 1}\lambda_j \le 1 \ \textrm{ and } \
\lambda_j \ge 0, \textrm{ for } j = 1, \ldots, r - 1 \Big\}.$$

\begin{lemma}\label{Lemma4}
There exist some positive constants $c$ and $\epsilon$ such that
\begin{equation*}
\|\nabla G_r(x,\lambda,v^1,\ldots, v^r)\|\ge c\, |G_r(x, \lambda, v^1, \ldots, v^r)|^{\theta_r}
\end{equation*}
for all $x \in \mathbb{B}^n(\bar{x}, \epsilon)$, all $\lambda \in \mathbf{P},$ and all $v^l \in \mathbb{S}^{p - 1}$ with $\mathrm{dist}(v^l, E(\bar{x})) \le \epsilon$ for $l = 1, \ldots, r,$ where $\theta_r := 1 - \frac{1}{\mathscr{R}(n + r - 1 + rp,d+3)}$.
\end{lemma}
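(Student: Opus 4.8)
The plan is to apply the explicit \L ojasiewicz gradient inequality of Theorem~\ref{GradientInequality} to the polynomial $G_r$, after first reducing to a situation where the relevant point is a zero of (a translate of) $G_r$. Note that $G_r$ is a polynomial in the $N := n + r - 1 + rp$ variables $(x,\lambda,v^1,\ldots,v^r)$ of degree at most $d+3$, so Theorem~\ref{GradientInequality} gives, for any point at which $G_r$ vanishes, a \L ojasiewicz gradient inequality with exponent $1 - \frac{1}{\mathscr{R}(N,d+3)} = \theta_r$ on a neighbourhood of that point. The issue is that we want a single pair of constants $c,\epsilon$ that works uniformly over the \emph{compact} parameter region
$$\mathbf{K} := \Big\{(\lambda,v^1,\ldots,v^r) \ : \ \lambda \in \mathbf{P},\ v^l \in \mathbb{S}^{p-1},\ \mathrm{dist}(v^l,E(\bar x))\le \epsilon_0 \Big\}$$
(for some fixed small $\epsilon_0$), and over all $x$ near $\bar x$, rather than just near one point.

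First I would handle the value $c_0 := f(\bar x)$ of $G_r$ at $\bar x$. Observe that for any unit eigenvector $v$ of $F(\bar x)$ corresponding to $f(\bar x)$ one has $g(\bar x, v) = \langle F(\bar x)v,v\rangle = f(\bar x)$, and hence for $v^1,\ldots,v^r \in E(\bar x)$ and any $\lambda \in \mathbf P$, $G_r(\bar x,\lambda,v^1,\ldots,v^r) = f(\bar x) = c_0$. Therefore the polynomial $\widetilde G_r := G_r - c_0$ vanishes at every point of $\{\bar x\}\times\mathbf{P}\times E(\bar x)^r$, and $\|\nabla \widetilde G_r\| = \|\nabla G_r\|$ while $|\widetilde G_r| = |G_r - c_0|$. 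Here is where I have to be slightly careful: the statement of Lemma~\ref{Lemma4} has $|G_r|$ on the right-hand side, not $|G_r - c_0|$; I would check against the intended use (in the proof of Theorem~\ref{NonSmoothTheorem} one replaces $F$ by $F - f(\bar x)I$, so WLOG $f(\bar x)=0$ and $c_0 = 0$), and either assume this normalization has been made or carry the constant $c_0$ along. Under the normalization $f(\bar x)=0$, $G_r$ itself vanishes on $\{\bar x\}\times\mathbf{P}\times E(\bar x)^r$.

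Next, for the uniformity, I would cover the compact set $\{\bar x\}\times\mathbf{P}\times E(\bar x)^r$ on which $G_r$ vanishes. Apply Theorem~\ref{GradientInequality} to $G_r$ at each such point: at $z_0 = (\bar x,\lambda_0,v^1_0,\ldots,v^r_0)$ there are $c(z_0)>0$, $\delta(z_0)>0$ with $\|\nabla G_r(z)\| \ge c(z_0)\,|G_r(z)|^{\theta_r}$ for $z$ in the ball $\mathbb{B}^N(z_0,\delta(z_0))$. By compactness, finitely many such balls cover $\{\bar x\}\times\mathbf{P}\times E(\bar x)^r$; take $c$ the minimum of the corresponding $c(z_0)$, and let $2\epsilon$ be a Lebesgue number of this finite cover, so that every point within distance $\epsilon$ of $\{\bar x\}\times\mathbf{P}\times E(\bar x)^r$ lies in one of the balls. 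Finally, if $x\in\mathbb{B}^n(\bar x,\epsilon')$ with $\epsilon'$ small, $\lambda\in\mathbf P$, and $v^l\in\mathbb{S}^{p-1}$ with $\mathrm{dist}(v^l,E(\bar x))\le\epsilon'$, then $(x,\lambda,v^1,\ldots,v^r)$ is within distance roughly $\sqrt{1+r}\,\epsilon'$ of the point $(\bar x,\lambda,\pi(v^1),\ldots,\pi(v^r))\in\{\bar x\}\times\mathbf P\times E(\bar x)^r$, where $\pi(v^l)$ is a nearest point of $E(\bar x)$; choosing $\epsilon'$ so that $\sqrt{1+r}\,\epsilon' \le \epsilon$ places the point in one of the cover balls and yields the desired inequality with the uniform constant $c$.

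The main obstacle is the \emph{uniformization} step — converting the pointwise, local \L ojasiewicz inequalities from Theorem~\ref{GradientInequality} into one valid on a full neighbourhood of the (higher-dimensional, but compact) zero locus $\{\bar x\}\times\mathbf P\times E(\bar x)^r$ inside $\mathbb B^N$. Compactness of $\mathbf P$ and of $E(\bar x)$ (the latter was recorded right after the definition of $E(x)$) makes this a routine Lebesgue-number argument, but one must be attentive that the constant $c$ survives the finite minimum and that the relation between the tube radius $\epsilon'$ in the $(x,v)$ variables and the covering radius $\epsilon$ in $\mathbb{R}^N$ is tracked correctly (the $\sqrt{1+r}$ factor above). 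A secondary point to verify is simply that $\deg G_r \le d+3$: $g(x,v) = \sum_{i,j}f_{ij}(x)v_iv_j$ has degree $\le d+2$ in $(x,v)$, and multiplying by the affine factors $\lambda_l$ or $1-\sum\lambda_l$ raises this to $\le d+3$, which is exactly what makes the exponent come out as $\theta_r = 1 - \frac{1}{\mathscr{R}(n+r-1+rp,\,d+3)}$.
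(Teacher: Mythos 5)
Your main case coincides with the paper's own argument: a compactness reduction over the parameter set $\mathbf{P}\times E(\bar{x})^r$ followed by an application of Theorem~\ref{GradientInequality} to the polynomial $G_r$ in $N=n+r-1+rp$ variables of degree at most $d+3$, which is exactly how the exponent $\theta_r$ arises; your degree and variable counts and the covering/Lebesgue-number step are fine. The gap is the one you flagged yourself and then deferred: Lemma~\ref{Lemma4} is stated for an arbitrary $\bar{x}$, with $|G_r|$ (not $|G_r-f(\bar{x})|$) on the right-hand side, and your argument only proves it after normalizing $f(\bar{x})=0$. Assuming the normalization proves a different (weaker) statement than the lemma as written, and ``carrying $c_0$ along'' changes the right-hand side; either way the lemma as stated is not established by your proposal.

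The paper closes precisely this case by a short direct observation rather than by normalization. If $G_r(\bar{x},\bar{\lambda},\bar{v}^1,\ldots,\bar{v}^r)\ne 0$, then since $\bar{v}^l\in E(\bar{x})$ one has $G_r(\bar{x},\bar{\lambda},\bar{v}^1,\ldots,\bar{v}^r)=f(\bar{x})\ne 0$, and the derivative of $G_r$ with respect to the variables $v^1,\ldots,v^r$ at this point equals
\begin{equation*}
\left[\,2\bar{\lambda}_1 f(\bar{x})\bar{v}^1,\ \ldots,\ 2\bar{\lambda}_{r-1}f(\bar{x})\bar{v}^{r-1},\ 2\Bigl(1-\sum_{l=1}^{r-1}\bar{\lambda}_l\Bigr)f(\bar{x})\bar{v}^r\,\right],
\end{equation*}
which is nonzero because the weights $\bar{\lambda}_1,\ldots,\bar{\lambda}_{r-1},1-\sum_{l}\bar{\lambda}_l$ are nonnegative and sum to $1$ while the $\bar{v}^l$ are unit vectors. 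Hence $\|\nabla G_r\|$ is bounded away from $0$ and $|G_r|^{\theta_r}$ is bounded on a small neighbourhood of such a base point, so the desired inequality holds there with a sufficiently small constant. Splitting the base points into the vanishing case (Theorem~\ref{GradientInequality}) and this nonvanishing case, and then running your compactness covering over $\{\bar{x}\}\times\mathbf{P}\times E(\bar{x})^r$, proves the lemma exactly as stated, with no normalization; if instead you keep the normalization route, the hypothesis $f(\bar{x})=0$ must be added to the lemma (which would still suffice for the proof of Theorem~\ref{NonSmoothTheorem}, where that reduction is made, but it is not what Lemma~\ref{Lemma4} claims).
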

\begin{proof}
By a standard compactness argument, it suffices to show, for each $\bar{\lambda} \in \mathbf{P}$ and each $\bar{v}^1, \ldots, \bar{v}^r \in E(\bar{x}),$ that there exist some positive constants $\bar{c}$ and $\bar{\epsilon}$  such that
\begin{equation}\label{Eq6}
\|\nabla G_r(x,\lambda,v^1,\ldots, v^r)\|\ge \bar{c}\, | G_r(x, \lambda, v^1, \ldots, v^r)|^{\theta_r}
\end{equation}
for $\|x - \bar{x} \| \le \bar{\epsilon},$ $\|\lambda - \bar{\lambda}\| \le \bar{\epsilon},$ and $\|v ^l- \bar{v}^l \| \le \bar{\epsilon}$ for $l = 1, \ldots, r.$

To see this, take any $\bar{\lambda} \in \mathbb{P}$ and $\bar{v}^1, \ldots, \bar{v}^r \in E(\bar{x}).$  If $G_r(\bar{x}, \bar{\lambda}, \bar{v}^1, \ldots, \bar{v}^r) = 0$ then Inequality~(\ref{Eq6}) follows from Theorem~\ref{GradientInequality}. So we assume that $G_r(\bar{x}, \bar{\lambda}, \bar{v}^1, \ldots, \bar{v}^r) \ne 0.$ By definition, we have for all $l = 1, \ldots, r,$
\begin{eqnarray*}
f(\bar{x}) \bar{v}^l &=& F(\bar{x}) \bar{v}^l, \ \textrm{ and }  \\
f(\bar{x}) &=& \langle F(\bar{x}) \bar{v}^l, \bar{v}^l \rangle = g(\bar{x}, \bar{v}^l) =
G_r(\bar{x}, \bar{\lambda}, \bar{v}^1, \ldots, \bar{v}^r).
\end{eqnarray*}
Further, observe that
\begin{eqnarray*}
\nabla_{v^1, \ldots, v^r} G_r(\bar{x}, \bar{\lambda}, \bar{v}^1, \ldots, \bar{v}^r)
&=& \left[ 2 \bar{\lambda}_1 F( \bar{x} ) \bar{v}^1, \ldots, 2\bar{\lambda}_{r - 1}F(\bar{x}) \bar{v}^{r - 1}, 2\left(1 - \sum_{l = 1}^{r - 1}\bar{\lambda}_l\right)F(\bar{x})\bar{v}^r\right]\\
&=& \left[ 2 \bar{\lambda}_1 f( \bar{x} ) \bar{v}^1, \ldots, 2\bar{\lambda}_{r - 1}f(\bar{x}) \bar{v}^{r - 1}, 2\left(1 - \sum_{l = 1}^{r - 1}\bar{\lambda}_l\right) f(\bar{x})\bar{v}^r\right]\\
&=& \left[ 2 \bar{\lambda}_1 \bar{v}^1, \ldots, 2\bar{\lambda}_{r - 1}\bar{v}^{r - 1}, 2\left(1 - \sum_{l = 1}^{r - 1}\bar{\lambda}_l\right) \bar{v}^r\right]f( \bar{x} ),
\end{eqnarray*}
where $\nabla_{v^1, \ldots, v^r} G_r$ stands for the derivative of the function $G_r$ with respect to the variables $v^1, \ldots, v^r.$ Hence $ \nabla_{v^1, \ldots, v^r} G_r(\bar{x}, \bar{\lambda}, \bar{v}^1, \ldots, \bar{v}^r) \ne 0,$ and so $\nabla G_r(\bar{x}, \bar{\lambda}, \bar{v}^1, \ldots, \bar{v}^r) \ne 0$. Since $G_r$ and $\nabla G_r$ are continuous functions, by choosing $\bar c$ and $\bar\epsilon$ small enough, we get Inequality~(\ref{Eq6}).
\end{proof}

Now, we are in position to finish the proof of Theorem \ref{NonSmoothTheorem}.

\begin{proof}[Proof of Theorem \ref{NonSmoothTheorem}]
Without loss of generality we may assume that $f(\bar x) = 0.$

Applying Lemma \ref{Lemma3} for $\epsilon_1 := 1$ we get a constant $c > 0$ such that
$$E(x) \subset  E(\bar{x}) + c\|x - \bar{x}\|^{\frac{1}{\mathscr{R}(p, 4)}} \mathbb{B}^n \quad \textrm{ for all } \quad x \in \mathbb{B}^n(\bar{x}, \epsilon_1).$$
Let $\epsilon_2 > 0$ be such that Lemma \ref{Lemma4} holds and choose a real number $\epsilon$ satisfying
$0 < \epsilon \le \min\{\epsilon_1, \epsilon_2, \left(\frac{\epsilon_2}{c} \right)^{\mathscr{R}(p, 4)}\}.$ Then it is clear that
$$\mathrm{dist}(v, E(\bar{x})) \le \epsilon \quad \textrm{ for all } \quad
x \in \mathbb{B}^n(\bar{x}, \epsilon) \ \textrm{ and } \ v \in E(x).$$
Shrinking $\epsilon$, if necessary, we may assume that $|f(x)| < 1$ for all $x \in \mathbb{B}^n(\bar{x}, \epsilon).$

Take an arbitrary $x$ in $\mathbb{B}^n(\bar{x}, \epsilon)$ and let $w \in \partial^\circ f(x).$ By Lemma \ref{Lemma1}, there are $(\lambda_1, \ldots, \lambda_{r - 1}) \in \mathbf{P}$ and some unit eigenvectors $v^1, \ldots, v^r$ of $F(x)$ corresponding to the eigenvalue $f(x)$,
such that
$$w = \displaystyle\sum_{l=1}^{r-1}\lambda_l\nabla_xg(x,v^l)+\left(1-\sum_{l=1}^{r-1}\lambda_l\right)\nabla_xg(x,v^r),$$
for some $r \in \{1, \ldots, n + 1\}.$ Since $g(x,v^l) = f(x)$ for $l = 1, \ldots, r,$ it follows that
\begin{eqnarray*}
G_r(x,\lambda,v^1,\ldots,v^r) &=& \displaystyle\sum_{l=1}^{r-1}\lambda_lf(x)+\left(1-\sum_{l=1}^{r-1}\lambda_l\right)f(x)=f(x).
\end{eqnarray*}
Moreover we have
\begin{displaymath}
\begin{array}{llll}
$$\nabla G_r(x,\lambda,v^1,\ldots,v^r)
&=& \left[\displaystyle\sum_{l=1}^{r-1}\lambda_l\nabla_xg(x,v^l)+\left(1-\sum_{l=1}^{r-1}\lambda_l\right)\nabla_xg(x,v^r),\right.$$\\
$$&& \quad \displaystyle g(x,v^1)-g(x,v^r),\ldots,g(x,v^{r-1})-g(x,v^r),$$\\
$$&& \quad \left.\displaystyle 2\lambda_1F(x)v^1,\ldots,2\lambda_{r-1}F(x)v^{r-1},2\left(1-\sum_{l=1}^{r-1}\lambda_l\right)F(x)v^r\right]$$\\
$$&=& \Big [w, f(x)-f(x),\ldots,f(x)-f(x),$$\\
$$&&\quad  \left.\displaystyle 2\lambda_1f(x)v^1,\ldots,2\lambda_{r-1}f(x)v^{r-1},2\left(1-\sum_{l=1}^{r-1}\lambda_l\right) f(x)v^r\right]$$\\
$$&=&\displaystyle\left[w,0,\ldots,0,2\lambda_1f(x)v^1,\ldots,2\lambda_{r-1}f(x)v^{r-1},2\left(1 - \sum_{l=1}^{r-1}\lambda_l\right)f(x)v^r\right].$$
\end{array}
\end{displaymath}
Therefore
\begin{eqnarray*}
\|\nabla G_r(x,\lambda,v^1,\ldots,v^r)\| &=& \|w\|+2|f(x)|\left(\displaystyle\sum_{l=1}^{r-1}\lambda_l\|v^l\| + \left(1 - \sum_{l=1}^{r-1}\lambda_l \right)\|v^r\|\right) \\
&=&\|w\|+2|f(x)|.
\end{eqnarray*}
(Here we use the norm $\|(x, \lambda, v^1, \ldots, v^r)\| := \|x\| + \|\lambda\| + \|v^1\|+ \cdots + \|v^r\|.)$
For each $l = 1, \ldots, r,$ we know that $v^l \in E(x)$ and so
\begin{eqnarray*}
\mathrm{dist}(v^l, E(\bar{x})) & \le & \epsilon \ \le \ \epsilon_2 \quad \textrm{ for } \quad l = 1, \ldots, r.
\end{eqnarray*}
Thanks to Lemma \ref{Lemma4}, we have
\begin{eqnarray*}
\|w\|+2|f(x)| \ = \ \|\nabla G_r(x,\lambda,v^1,\ldots,v^r)\| &\ge& c|f(x)|^{\theta_{r}} \ \ge \ c |f(x)|^{\theta_{n+1}},
\end{eqnarray*}
where the last inequality follows from the facts that $|f(x)| <1$ and
$$0 < \theta_r < \theta_{n + 1} = 1 - \frac{1}{\mathscr{R}(2n + p(n + 1), d + 3)}.$$
Thus
$$\|w\|\ge (c - 2|f(x)|^{1 - \theta_{n+1}})|f(x)|^{\theta_{n+1}}.$$
By choosing $\epsilon$ small enough, then
$$\|w\|\ge \frac{c}{2}|f(x)|^{\theta_{n+1}} \quad \text{ for all } \quad x \in \mathbb{B}^n(\bar{x}, \epsilon).$$
The inequality holds for all $w \in \partial^\circ f(x),$ so the theorem follows.
\end{proof}

\section{Local \L ojasiewicz-type inequality and local separation of semialgebraic sets} \label{LocalErrorBounds}

Theorem \ref{NonSmoothTheorem} allows us to deduce the following local \L ojasiewicz-type inequality for the largest eigenvalue function.

\begin{theorem}\label{CompactErrorBound}
Let $F$ and $f$ be as in Theorem \ref{NonSmoothTheorem}. Then for any compact set $K \subset\Bbb R^n,$ there exists a constant $c > 0$ such that
\begin{equation}\label{Eq7}
c\, \mathrm{dist}(x, S_F) \le [f(x)]_+^{\frac{1}{\mathscr{R}(2n+p(n+1), d+3)}} \quad \textrm{ for all } \quad x\in K,
\end{equation}
where $S_F := \{x \in {\Bbb R}^n \ : \ F(x) \preceq  0\}$.
\end{theorem}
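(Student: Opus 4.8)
The plan is to derive Theorem~\ref{CompactErrorBound} from the nonsmooth \L ojasiewicz gradient inequality (Theorem~\ref{NonSmoothTheorem}) via a standard ``integration along the trajectory of steepest descent'' argument, using the nonsmooth slope $\mathfrak{m}_f$ as a substitute for $\|\nabla f\|$. First I would observe that it suffices to prove the inequality locally: since $K$ is compact, it can be covered by finitely many balls $\mathbb{B}^n(\bar x, \epsilon_{\bar x})$, and on each such ball Theorem~\ref{NonSmoothTheorem} gives constants $c_{\bar x}>0$ and the exponent $\theta := 1 - \frac{1}{\mathscr{R}(2n+p(n+1),d+3)}$; a partition-of-unity / finite-subcover argument then patches these into a single constant on $K$ (enlarging $K$ slightly to a compact neighborhood to absorb the radii). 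So I would reduce to showing: for each $\bar x \in K$ there exist $c>0$, $\epsilon>0$ with $c\,\mathrm{dist}(x,S_F) \le [f(x)]_+^{1-\theta} = [f(x)]_+^{1/\mathscr{R}(\cdots)}$ for all $x \in \mathbb{B}^n(\bar x,\epsilon)$.

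For this local statement I would distinguish cases according to the value $f(\bar x)$. If $f(\bar x) < 0$, then by continuity of the largest eigenvalue function $f$ there is a ball around $\bar x$ contained in $S_F$, so $\mathrm{dist}(x,S_F)=0$ there and the inequality is trivial. If $f(\bar x) > 0$, then again by continuity $f > 0$ on a ball $\mathbb{B}^n(\bar x,\epsilon)$, so $\mathrm{dist}(x,S_F)$ stays bounded while $[f(x)]_+$ stays bounded away from $0$, and shrinking $\epsilon$ makes the inequality hold for a suitable $c$. The substantive case is $f(\bar x) = 0$. Here I would apply Theorem~\ref{NonSmoothTheorem} at $\bar x$: for $x$ near $\bar x$, $\mathfrak{m}_f(x) \ge c\,|f(x) - f(\bar x)|^{\theta} = c\,|f(x)|^{\theta}$. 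The idea is then: given $x$ with $f(x) > 0$ close to $\bar x$, run the (generalized) gradient flow $\dot\gamma = -w(\gamma)$, $w(\gamma)$ a minimal-norm element of $\partial f(\gamma)$, which is a descent curve; along it the function value strictly decreases and one uses the Kurdyka--\L ojasiewicz mechanism $\frac{d}{dt}\,\frac{1}{1-\theta}\, f(\gamma(t))^{1-\theta} = -f(\gamma(t))^{-\theta}\,\langle w, \dot\gamma\rangle/\|\dot\gamma\|\cdot\|\dot\gamma\| \le -\mathfrak{m}_f(\gamma(t))\cdot f(\gamma(t))^{-\theta} \le -c$ pointwise (using the slope bound), so that the curve reaches the level $\{f \le 0\}$ in finite time and, integrating, its length is bounded by $\frac{1}{c(1-\theta)}\, f(x)^{1-\theta}$. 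Since the endpoint lies in $S_F$, this length bounds $\mathrm{dist}(x, S_F)$, giving exactly $c'\,\mathrm{dist}(x,S_F) \le [f(x)]_+^{1-\theta}$. To make this rigorous without smoothness I would instead invoke the now-standard result that a KL inequality with exponent $\theta$ in terms of $\mathfrak{m}_f$ (or $\partial^\circ f$) on a ball implies the corresponding local error bound $\mathrm{dist}(x, [f\le 0]) \lesssim [f - f(\bar x)]_+^{1-\theta}$; this is precisely the metric-space / nonsmooth version of the \L ojasiewicz argument, citing \cite{Kurdyka2014, Bolte} (or reproducing the talweg / subgradient-curve argument with the Curve Selection Lemma available in the semialgebraic setting).

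The main obstacle is the nonsmooth descent-curve construction: one must ensure the gradient trajectory is well defined, that it actually decreases $f$ at the rate dictated by $\mathfrak{m}_f$, and that it reaches $S_F$ rather than stalling at a point where $f>0$ but $\partial f$ contains $0$ — which is exactly what the \L ojasiewicz gradient inequality rules out near $\bar x$ (since $\mathfrak{m}_f(x) \ge c\,|f(x)|^\theta > 0$ whenever $f(x)\ne 0$). A clean way to sidestep the differential-inclusion technicalities is to use the \emph{discrete} KL argument: take the semialgebraic curve realizing $\mathrm{dist}(x, f^{-1}((-\infty,0]))$, parametrize the level sets of $f$, and use the coarea-type / Curve Selection argument to bound the length; since $f$ is semialgebraic and locally Lipschitz, all the needed regularity (finiteness of critical values, existence of the talweg) is automatic. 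Finally I would note that near $\bar x$ one has $S_F = \{f \le 0\}$ locally (as $f$ is the largest eigenvalue, $F(x)\preceq 0 \iff f(x)\le 0$), so $\mathrm{dist}(x,S_F) = \mathrm{dist}(x, \{f\le 0\})$ exactly, which is what the argument controls; the case analysis above also handles the possibility that $S_F$ does not meet the ball but $f(\bar x)>0$. Patching the finite cover then yields the uniform constant $c$ on $K$, completing the proof.
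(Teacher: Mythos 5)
Your proposal is correct and takes essentially the same route as the paper: after covering $K$ by finitely many balls and disposing trivially of those not meeting $S_F$, the paper bounds $\mathrm{dist}(x,S_F)$ by the length of the subgradient descent trajectory of $f_+$ (the dynamical system $0\in\dot u(t)+\partial f_+(u(t))$ of \cite{Bolte2007}), integrating the Kurdyka--\L ojasiewicz estimate from Theorem \ref{NonSmoothTheorem} exactly as you sketch, with the technical points you flag (existence of the trajectory, keeping it inside the region where the gradient inequality holds) handled via the difference-of-convex structure of $f_+$ and a slightly enlarged ball. The paper also notes, as you do, that one could instead simply invoke a known local error-bound result (there \cite{Ngai2009}).
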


\begin{proof}
Theorem~\ref{CompactErrorBound} can be deduced straightly from a result on local error bounds (see \cite{Ngai2009}). Here, we present a different proof whose ideas is based on estimating the length of trajectories of the subgradient dynamical system (see e.g. \cite{Bolte2007}, \cite{Kurdyka2014} for more details). 

Denote by $\overset{\circ}{\mathbb B^n}(x,\epsilon)$ the open ball centered at $x$ of radius $\epsilon > 0$ in $\mathbb{R}^n$ (and so ${\mathbb{B}^n}(x,\epsilon)$ is its closure). Since $K$ is compact, we can cover $K$ by finite open balls ${\stackrel{\circ}{\mathbb{B}^n}}(\bar x_i,\epsilon_i), i=1,\ldots,m,$ such that:
\begin{itemize}
\item Either $\bar x_i\in S_F$ or $\mathbb{B}^n(\bar x_i,\epsilon_i)\cap S_F=\emptyset$;
\item If $\bar x_i\in S_F$ then Inequality~(\ref{Eq5b}) holds in ${\stackrel{\circ}{\mathbb{B}^n}}(\bar x_i,\epsilon_i+\tilde cM)$ where $\tilde c :=\frac{\mathscr{R}(2n+p(n+1),d+3)}{c},$ $M := \displaystyle\sup_{x \in \mathbb{B}^n(\bar x_i,\epsilon_i)}[f_+(x)]^{\frac{1}{\mathscr{R}(2n+p(n+1),d+3)}},$ and $c$ is the constant in Inequality (\ref{Eq5b}).
\end{itemize}

First of all, it is clear that by taking $c$ small enough, Inequality~(\ref{Eq7}) holds for all $x\in {\stackrel{\circ}{\mathbb{B}^n}}(\bar x_i,\epsilon_i)$ with $\mathbb{B}^n(\bar x_i,\epsilon_i)\cap S_F=\emptyset$ since $\inf_{x\in \mathbb{B}^n(\bar x_i,\epsilon_i)}[f(x)]_+=\inf_{x \in \mathbb{B}^n(\bar x_i,\epsilon_i)}[f(x)] > 0.$ So it remains to prove Inequality~(\ref{Eq7}) for all $x\in {\stackrel{\circ}{\mathbb{B}^n}}(\bar x_i,\epsilon_i)$ with $\bar x_i\in S_F.$ 

Note that $f_+(x) = \max_{\{\|v\| = 1\} \cup \{0\}} \langle F(x) v, v \rangle \ge 0$ for all $x \in \mathbb{R}^n.$ Since the set $\{\|v\| = 1\} \cup \{0\}$ is nonempty compact, $f_+$ is locally Lipschitz and locally representable as a difference of a convex continuous and a convex quadratic function (see, e.g., \cite[Theorem~10.33]{Rockafellar1998}). In particular, it satisfies $\partial f_+(x) = \hat{\partial} f_+(x) \ne \emptyset$ for all $x \in \mathbb{R}^n.$ Furthermore, if $f(x) > 0$ then $f(x) = f_+(x)$ and $\frak m_{f_+}(x) = \frak m_{f}(x)$ by the continuity of $f.$

Let $x \in {\stackrel{\circ}{\mathbb{B}^n}}(\bar x_i,\epsilon_i)$ be such that $f(x)>0.$ By \cite[Corollaries~4.1 and 4.2]{Bolte2007}, there is a unique absolutely continuous integral curve $u \colon [0, +\infty) \to \Bbb R^n$ of the dynamical system 
$$0 \in \dot{u}(t) + \partial[f_+(u(t))] \quad \textrm{ with } \quad u(0) = x$$
such that the following properties hold:
\begin{itemize}
\item [(a)] The function $f_+ \circ u$ is absolutely continuous and decreasing on $[0, +\infty).$

\item [(b)] For almost all $t \in (0, +\infty),$
\begin{eqnarray*}
\| \dot{u}(t)\| &=& \frak m_{f_+}(u(t)) \quad \textrm{ and } \quad  \frac{d}{dt}({f_+} \circ u)(t) \ = \ -[\frak m_{f_+}(u(t))]^2. %  \ = \ \langle \dot{u}(t), x^* \rangle \quad \textrm{ for all } \quad  x^* \in \partial f_+(u(t)).
\end{eqnarray*}

\item [(c)] If there exists $t_0 > 0$ such that $\frak{m}_{f_+} (u(t_0)) = 0,$ then $u(t) = u(t_0)$ for all $t \ge  t_0.$
\end{itemize}

Let  
$$T := \sup\{t > 0 : f_+(u(t))  > 0\}.$$
Clearly, $0 < T \le +\infty.$ Assume $f_+(u(t_0))  = 0$ for some $t_0 \in (0, T).$ Then $u(t_0)$ is a global minimizer of $f_+$ on $\mathbb{R}^n.$ Hence $0 \in \partial f_+(u(t_0))$ and in consequence $\frak{m}_{f_+} (u(t_0)) = 0.$ The property~(c) implies that for all $t \ge  t_0,$ $u(t) = u(t_0)$ and so $f_+(u(t))  = f_+(u(t_0))  = 0,$  which contradicts the definition of $T.$ Therefore $f_+(u(t))  > 0$ for all $t \in [0, T).$ 

Let
$$T_0 := \sup\{t \in (0, T)  :  u(s)\in {\stackrel{\circ}{\mathbb{B}^n}}(\bar x_i, \epsilon_i + \tilde cM) \quad \textrm{ for all } \quad s \in [0, t)\}.$$
Clearly, $0 < T_0 \le T.$ For simplicity we write $\rho := 1 - \frac{1}{\mathscr{R}(2n+p(n+1),d+3)}	> 0.$ By chain rule, the property~(b) and Theorem~\ref{NonSmoothTheorem}, we have for almost all $t \in  [0, T_0),$
\begin{eqnarray*}
\frac{d}{dt}({f_+}\circ u)^{1 - \rho} (t) 
&=& (1 - \rho) ({f_+}\circ u)^{-\rho}(t) \frac{d}{dt}({f_+}\circ u) (t)  \\
&=& -(1 - \rho) ({f_+}\circ u)^{-\rho}(t) [\frak{m}_{f_+}(u(t))]^2 \\
&=& -(1 - \rho) ({f_+}\circ u)^{-\rho}(t)[\frak{m}_{f_+}(u(t))]\|\dot{u}(t)\| \\
&=& -(1 - \rho) ({f}\circ u)^{-\rho}(t) [\frak{m}_{f}(u(t))]\|\dot{u}(t)\| \\
&\le& -\frac{1}{\tilde{c}} \|\dot{u}(t)\|.
\end{eqnarray*}
In consequence, we obtain for all $t \in  [0, T_0),$
\begin{eqnarray} \label{LengthEqn}
\mathrm{length}(u|_{[0, t)})
&\le&  \tilde{c} \left[ ({f_+}\circ u)^{1 - \rho} (0)  - ({f_+}\circ u)^{1 - \rho} (t) \right], 
\end{eqnarray}
where $\mathrm{length}(u|_{[0, t)})$ stands for the length of $u|_{[0, t)}.$

Assume that we have proved that $\lim_{t \to T_0} f_+(u(t)) = 0.$ This, of course, implies that 
\begin{eqnarray*}
\mathrm{dist}(x, S_F) & \le & \mathrm{length}(u|_{[0, T_0)}) \ \le \ {\tilde c}[f_+(x)]^{1 - \rho},
\end{eqnarray*}
which completes the proof of the theorem.

So we are left with proving that $\lim_{t \to T_0} f_+(u(t)) = 0.$ Indeed, by contradiction, assume that $\lim_{t \to T_0} f_+(u(t)) > 0.$ Then, by \eqref{LengthEqn},
\begin{eqnarray*}
\mathrm{length}(u|_{[0, T_0)}) &  <   & \tilde{c} [f_+(x)]^{1 - \rho} \le \tilde{c} M.
\end{eqnarray*}

There are two cases to be considered.

\subsubsection*{Case 1: $T_0 < + \infty$}
In this case, we have
\begin{eqnarray*}
\|u(T_0) - \bar{x}_i \| 
&  \le  & \mathrm{length}(u|_{[0, T_0)}) +  \|x - \bar{x}_i \|  \  <   \ {\tilde c} M + \epsilon_i,
\end{eqnarray*}
which yields $u(T_0) \in {\stackrel{\circ}{\mathbb{B}^n}}(\bar x_i,\epsilon_i+\tilde cM).$ Then, by continuity, $f_+(u(T_0 + \delta)) > 0$ and  $u(T_0 + \delta) \in {\stackrel{\circ}{\mathbb{B}^n}}(\bar x_i,\epsilon_i+\tilde cM)$ 
for all sufficiently small $\delta > 0.$ This contradicts the definition of $T_0.$ 

\subsubsection*{Case 2: $T_0 = + \infty$}
In this situation, the trajectory $u(t)$ is bounded because its length is bounded by ${\tilde c} M.$ Thanks to \cite[Theorem~4.5]{Bolte2007}, the trajectory $u(t)$ converges to some point $x^\infty \in \mathbb{R}^n$ with  $\frak{m}_{f_+}(x^\infty) = 0.$ Arguing as above, it is easy to see that $x^\infty \in {\stackrel{\circ}{\mathbb{B}^n}}(\bar x_i,\epsilon_i+\tilde cM).$  This, together with Theorem~\ref{NonSmoothTheorem}, implies that $f_+(x^\infty) = 0,$ which contradicts our assumption that $\lim_{t \to T_0} f_+(u(t)) > 0.$ 
%and completes the proof of the theorem. 
\end{proof}

Another consequence of Theorem \ref{NonSmoothTheorem} is the following separation of semialgebraic sets with an explicit exponent.

\begin{proposition}\label{Separation}
Let $F \colon  \Bbb R^n \rightarrow {\mathcal S}^p,\ x\mapsto F(x)=(f_{ij}(x)),$ and $G \colon \Bbb R^n \rightarrow {\mathcal S}^q,\ x\mapsto G(x)=(g_{kl}(x)),$ be two symmetric polynomial matrices of order $p$ and $q,$ respectively. Set
$$S_F := \{x \in {\Bbb R}^n \ : \ F(x) \preceq  0\} \quad \textrm{ and } \quad   S_G := \{x \in {\Bbb R}^n \ : \ G(x) \preceq  0\},$$
and assume that $S_F\cap S_G \ne\emptyset.$ Then for any compact set $K \subset\Bbb R^n,$ there exists a constant $c>0$ such that
$$c\, \mathrm{dist}(x,S_F\cap S_G) \le \Big (\mathrm{dist}(x,S_F)+\mathrm{dist}(x,S_G) \Big)^{\frac{1}{\mathscr{R}(2n+(p+q)(n+1), d+3)}} \quad \textrm{ for all } \quad x \in K,$$
where $\displaystyle d := \max_{i,j = 1, \ldots, p,\ k,l = 1, \ldots, q}\{\deg f_{i j},\deg g_{k l}\}.$
\end{proposition}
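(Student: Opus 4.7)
The plan is to deduce the proposition directly from Theorem~\ref{CompactErrorBound} by assembling $F$ and $G$ into a single larger symmetric polynomial matrix. First I would form the block-diagonal matrix
\[
H(x) := \begin{pmatrix} F(x) & 0 \\ 0 & G(x) \end{pmatrix},
\]
which is a symmetric polynomial matrix of order $p + q$ whose entries have degree at most $d$. Its eigenvalues are the union of those of $F(x)$ and of $G(x)$, so its largest eigenvalue function is $h(x) := \max\{f(x), g(x)\}$, where $f$ and $g$ are the largest eigenvalues of $F$ and $G$. Moreover $\{x \in \mathbb{R}^n : H(x) \preceq 0\} = S_F \cap S_G$, which is nonempty by hypothesis. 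Applying Theorem~\ref{CompactErrorBound} to $H$ on the compact set $K$ would at once yield a constant $c_1 > 0$ with
\[
c_1\, \mathrm{dist}(x, S_F \cap S_G) \ \le \ [h(x)]_+^{1/\mathscr{R}(2n + (p + q)(n + 1), d + 3)} \qquad \textrm{for all } x \in K,
\]
which already carries the exponent appearing in the target inequality.

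It then remains to bound $[h(x)]_+$ by $\mathrm{dist}(x, S_F) + \mathrm{dist}(x, S_G)$, up to a constant, on $K$. Since $[h(x)]_+ = \max\{[f(x)]_+, [g(x)]_+\} \le [f(x)]_+ + [g(x)]_+$, I would prove the two one-sided estimates $[f(x)]_+ \le L\, \mathrm{dist}(x, S_F)$ and $[g(x)]_+ \le L\, \mathrm{dist}(x, S_G)$ separately. Because $S_F \cap S_G$ is nonempty, fixing any $x^\ast$ in this set shows that $\mathrm{dist}(x, S_F)$ and $\mathrm{dist}(x, S_G)$ are uniformly bounded by some $R > 0$ for $x \in K$. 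Consequently, for every $x \in K$, the nearest points to $x$ in $S_F$ and in $S_G$ lie in the compact enlargement $K' := \{y \in \mathbb{R}^n : \mathrm{dist}(y, K) \le R\}$. By Lemma~\ref{Lemma1}(ii), $f$ and $g$ are locally Lipschitz, hence Lipschitz on $K'$ with some common constant $L$. For a nearest point $y \in S_F$ to $x$ one has $f(y) \le 0$, so $[f(x)]_+ \le f(x) - f(y) \le L\,\|x - y\| = L\, \mathrm{dist}(x, S_F)$ when $f(x) \ge 0$, while the case $f(x) < 0$ is trivial; the analogous bound holds for $g$ and $S_G$.

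Combining the two inequalities and absorbing the constant into $c$ produces the asserted estimate. The only non-routine ingredient is the block-diagonal construction of $H$, which is precisely what forces the factor $p + q$ into the exponent $1/\mathscr{R}(2n + (p + q)(n + 1), d + 3)$; I do not expect any genuine obstacle, though some care is needed to exploit the hypothesis $S_F \cap S_G \ne \emptyset$ in order to place all relevant nearest points inside a single fixed compact set, ensuring that a uniform Lipschitz constant for $f$ and $g$ exists.
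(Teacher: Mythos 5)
Your proposal is correct and follows essentially the same route as the paper: the block-diagonal matrix $H$ with largest eigenvalue $h=\max\{f,g\}$, an application of Theorem~\ref{CompactErrorBound} to $H$ on $K$, and then the bound $[h(x)]_+\le [f(x)]_++[g(x)]_+\le L\big(\mathrm{dist}(x,S_F)+\mathrm{dist}(x,S_G)\big)$ obtained from the Lipschitz property of $f$ and $g$ on a compact enlargement of $K$ containing the nearest points. The only cosmetic difference is that the paper invokes Lipschitzness of $[f]_+$ and $[g]_+$ directly, while you use $f(y)\le 0$ at the nearest point; both are fine.
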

\begin{proof}
Let $f(x)$ and $g(x)$ be the largest eigenvalues of the matrices $F(x)$ and $G(x),$ respectively. It follows from Lemma
\ref{Lemma1} that
$$f(x) := \max_{\|v\|=1}\langle F(x)v,v\rangle \quad \textrm{ and } \quad  g(x) := \max_{\|u\|=1}\langle G(x)u,u\rangle.$$
Define the symmetric polynomial matrix $H \colon  \Bbb R^n \rightarrow {\mathcal S}^{p + q},\ x\mapsto H(x),$ as follows
$$H(x) :=
\begin{pmatrix}
F(x) & 0 \\
0 & G(x)
\end{pmatrix},$$
and set $S_H := \{x \in {\Bbb R}^n \ : \ H(x) \preceq  0\}.$ It is clear that $S_H = S_F\cap S_G.$

Let $h(x)$ be the largest eigenvalue of the matrix $H(x).$ It is clear that $h(x)= \max\{f(x),g(x)\}$, so
\begin{displaymath}
\begin{array}{lrl}
$$[h(x)]_+&=&[\max\{f(x),g(x)\}]_+$$\\
$$&=&\max\{f(x),g(x),0\}$$\\
$$&=& \max\{[f(x)]_+,[g(x)]_+\}$$\\
$$&\le& [f(x)]_++[g(x)]_+.$$
\end{array}
\end{displaymath}
By Theorem \ref{CompactErrorBound}, there exists a constant $c>0$ such that for all $x\in K$, we have
$$c\, \mathrm{dist}(x, S_H) \le [h(x)]_+^{\frac{1}{\mathscr{R}(2n+(p+q)(n+1), d+3)}}.$$
Therefore
\begin{eqnarray}\label{Eq8}
c\, \mathrm{dist}(x, S_F\cap S_G) &\le& ([f(x)]_++[g(x)]_+)^{\frac{1}{\mathscr{R}(2n+(p+q)(n+1), d+3)}}.
\end{eqnarray}
Since $K$ is compact, $M := \max_{x\in K}\{\mathrm{dist}(x,S_F),\mathrm{dist}(x,S_G)\}<+\infty$ and $\widetilde{K} := K + M\mathbb{B}^n$ is a compact set. Note that the functions $x \mapsto [f(x)]_+$ and $x \mapsto [g(x)]_+$ are locally Lipschitz, so are globally Lipschitz on the compact set $\widetilde{K}$. Thus there exists a constant $L>0$ such that for all $x,y\in \widetilde{K}$, we have
$$|[f(x)]_+-[f(y)]_+|\le L\|x-y\| \quad \textrm{ and } \quad |[g(x)]_+-[g(y)]_+|\le L\|x-y\|.$$
Now for each $x\in K$, there exist $y\in S_F$ and $z\in S_G$ such that
$$\mathrm{dist}(x,S_F) = \|x-y\| \quad \textrm{ and } \quad \mathrm{dist}(x, S_G) = \|x-z\|.$$
It is clear that $y, z \in \widetilde{K}.$ Hence
\begin{eqnarray*}
|[f(x)]_+| &=&  |[f(x)]_+-[f(y)]_+|\le L\|x-y\|=L\mathrm{dist}(x,S_F), \\
|[g(x)]_+| &=& |[g(x)]_+-[g(z)]_+|\le L\|x-z\|=L\mathrm{dist}(x,S_G).
\end{eqnarray*}
These inequalities, together with Inequality (\ref{Eq8}), imply the proposition.
\end{proof}

The next result establishes a sharpen version of \L ojasiewicz's factorization lemma.
\begin{corollary}\label{factorization}
Let $F \colon  \Bbb R^n \rightarrow {\mathcal S}^p,\ x\mapsto F(x)=(f_{ij}(x)),$ $G \colon \Bbb R^n \rightarrow {\mathcal S}^q,\ x\mapsto G(x)=(g_{kl}(x)),$ and $H \colon \Bbb R^n \rightarrow {\mathcal S}^r,\ x\mapsto H(x)=(h_{st}(x)),$ be some symmetric polynomial matrices of order $p, q,$ and $r,$ respectively. Let $f(x), g(x),$ and $h(x)$ be the corresponding largest eigenvalue functions of $F(x), G(x),$ and $H(x)$. Assume that $K := \{x \in {\Bbb R}^n \ : \ H(x) \preceq  0\}$ is a compact set and that
\begin{eqnarray*}
\{x \in K \ : \ f(x) \le 0 \} & \subset & \{x \in K \ : \  g(x) \le 0\}.
\end{eqnarray*}
Then there is a constant $c > 0$ such that
$$[g(x)]_+ \le c\,[f(x)]_+^{\frac{1}{\mathscr{R}(2n + (p + r)(n + 1), d + 3)}}, \quad \textrm{for all } \quad x\in K,$$
where $\displaystyle d := \max_{i,j = 1, \ldots, p,\ s, t = 1, \ldots, r}\{\deg f_{i j},\deg h_{s t}\}.$ 
\end{corollary}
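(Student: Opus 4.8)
The plan is to deduce the statement from the local \L ojasiewicz-type inequality of Theorem~\ref{CompactErrorBound} by passing to a block-diagonal matrix that simultaneously encodes the constraint set $K$ and the superlevel set of $f$. First I would put $S := \{x \in K \ : \ f(x) \le 0\}$ and recall that, since $f$ and $h$ are largest eigenvalue functions, $f(x) \le 0$ is equivalent to $F(x) \preceq 0$ and $K = \{x \ : \ H(x) \preceq 0\}$. If $S = \emptyset$ the conclusion is immediate: by compactness of $K$ and continuity, $\min_{x \in K} f(x) > 0$ while $[g(\cdot)]_+$ is bounded on $K$, so the asserted inequality holds once $c$ is large enough. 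Hence I may assume $S \ne \emptyset$.

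Next I would introduce the symmetric polynomial matrix of order $p + r$
$$\Phi \colon \Bbb R^n \to {\mathcal S}^{p + r}, \qquad \Phi(x) := \begin{pmatrix} F(x) & 0 \\ 0 & H(x) \end{pmatrix},$$
whose entries have degree at most $d = \max\{\deg f_{ij}, \deg h_{st}\}$ and whose largest eigenvalue function is $\phi(x) = \max\{f(x), h(x)\}$ (the spectrum of a block-diagonal matrix being the union of the spectra of its blocks). Then $\Phi(x) \preceq 0$ exactly when $F(x) \preceq 0$ and $H(x) \preceq 0$, so $S_\Phi := \{x \ : \ \Phi(x) \preceq 0\} = S_F \cap K = S$. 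Applying Theorem~\ref{CompactErrorBound} to $\Phi$ on the compact set $K$ produces a constant $c_1 > 0$ with
$$c_1 \, \mathrm{dist}(x, S) \ \le \ [\phi(x)]_+^{\frac{1}{\mathscr{R}(2n + (p + r)(n + 1), d + 3)}} \qquad \text{for all } x \in K.$$
Since $h(x) \le 0$ on $K$, we have $[\phi(x)]_+ = \max\{f(x), h(x), 0\} = [f(x)]_+$ there, so the right-hand side equals $[f(x)]_+^{1/\mathscr{R}(2n + (p + r)(n + 1), d + 3)}$.

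It remains to replace $\mathrm{dist}(x, S)$ by a bound on $[g(x)]_+$. By Lemma~\ref{Lemma1}(ii), $g$ is locally Lipschitz, hence Lipschitz with some constant $L$ on a fixed closed ball $B$ containing $K$; as $t \mapsto [t]_+$ is $1$-Lipschitz, $[g(\cdot)]_+$ is $L$-Lipschitz on $B$. For $x \in K$ choose $y \in S$ with $\|x - y\| = \mathrm{dist}(x, S)$; then $y \in S \subset K \subset B$, and the hypothesis forces $g(y) \le 0$, so $[g(x)]_+ = [g(x)]_+ - [g(y)]_+ \le L \, \mathrm{dist}(x, S)$. Combining this with the previous display gives $[g(x)]_+ \le (L/c_1) \, [f(x)]_+^{1/\mathscr{R}(2n + (p + r)(n + 1), d + 3)}$ for all $x \in K$, which is the claim with $c = L/c_1$.

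I do not expect a genuine obstacle here. The only point that needs care is the bookkeeping of the exponent: the auxiliary matrix $\Phi$ must have order $p + r$ and degree $d$, and one must observe that $G$ enters the argument only through the harmless global Lipschitz estimate and therefore does not change the exponent, while $[\phi]_+$ and $[f]_+$ agree on $K$ precisely because $h \le 0$ there.
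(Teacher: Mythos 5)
Your proposal is correct and follows essentially the same route as the paper: form the block-diagonal matrix $\mathrm{diag}(F,H)$ of order $p+r$, apply Theorem~\ref{CompactErrorBound} on the compact set $K$ (noting $[\max\{f,h\}]_+=[f]_+$ on $K$), and finish with a Lipschitz estimate for $[g]_+$ using the hypothesis to bound it by the distance to $\{x\in K: f(x)\le 0\}$. The only cosmetic differences are your separate treatment of the empty case and projecting onto $\mathcal{A}$ itself rather than onto $\{g\le 0\}$ (the paper enlarges $K$ to $K+M\mathbb{B}^n$ for the latter), neither of which changes the argument or the exponent.
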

\begin{proof}
Let ${\mathcal A} := \{x \in K \ : \ f(x) \le 0\}.$ We have
\begin{eqnarray*}
{\mathcal A} &=& \left\{x \in \mathbb{R}^n \ : \ \begin{pmatrix} F(x) & 0 \\ 0 & H(x) \end{pmatrix} \preceq 0\right\}
\ = \ \{x \in \mathbb{R}^n \ : \ \max\{f(x), h(x)\} \le 0\} \\
&\subset& \{x \in K \ : \  g(x) \le 0\}.
\end{eqnarray*}
Since the set $K$ is compact, Theorem \ref{CompactErrorBound} gives 
\begin{eqnarray*}
\mathrm{dist}(x, {\mathcal A}) & \le & c_0\, \max\{f(x), h(x), 0\}^{\frac{1}{\mathscr{R}(2n + (p + r)(n + 1), d + 3)}} 
\  = \ c_0\, [f(x)]_+^{\frac{1}{\mathscr{R}(2n + (p + r)(n + 1), d + 3)}},
\end{eqnarray*}
for all $x \in K,$ where $c_0$ is a positive constant. Let $M:=\max_{x\in K}\mathrm{dist}(x,\{g\le 0\})<+\infty$ and $\tilde K:=K+M\Bbb B^n.$ The function $g$ is locally Lipschitz, thus, is globally Lipschitz on $\tilde K$, i.e., there is a constant $L > 0$ such that $|g(x) - g(y)| \le L \|x - y\|$ for all $x, y \in \tilde K.$

Now take any $x \in K.$ Clearly, there exists a point $y \in \tilde K$ such that $g(y) \le 0$ and $\mathrm{dist}(x, \{g \le 0\}) = \|x - y\|.$ Therefore,
\begin{eqnarray*}
[g(x)]_+ &\le& |g(x) - g(y)| \ \le \ L\|x - y\| \ = \  L d\big(x, \{g \le 0\} \big) \\
&\le& L d\big(x, {\mathcal A} \big) \ \le \ L c_0 [f(x)]_+^{\frac{1}{\mathscr{R}(2n + (p + r)(n + 1), d + 3)}}.
\end{eqnarray*}
This completes the proof of the corollary.
\end{proof}
\begin{remark}{\rm The statement of Corollary \ref{factorization} still holds in the case $g \colon K \rightarrow \mathbb{R}$ is a locally Lipschitz function.}
\end{remark}

\section{Global separation of semialgebraic sets and global \L ojasiewicz-type inequality} \label{GlobalErrorBounds}

In this section we provide some versions of global separation of semialgebraic sets and global \L ojasiewicz-type inequality with explicit exponents for the largest eigenvalue function.

\begin{corollary}\label{GlobalSeparation}
Let $F \colon  \Bbb R^n \rightarrow {\mathcal S}^p,\ x\mapsto F(x)=(f_{ij}(x)),$ and $G \colon \Bbb R^n \rightarrow {\mathcal S}^q,\ x\mapsto G(x)=(g_{kl}(x)),$ be two symmetric polynomial matrices of order $p$ and $q,$ respectively. Set
$$S_F := \{x \in {\Bbb R}^n \ : \ F(x) \preceq  0\} \quad \textrm{ and } \quad   S_G := \{x \in {\Bbb R}^n \ : \ G(x) \preceq  0\}$$
and assume that $S_F\cap S_G \ne\emptyset.$ Then there exists a constant $c>0$ such that
\begin{eqnarray*}
c \left( \frac{\mathrm{dist}(x,S_F\cap S_G)}{1  + \|x\|^2} \right)^{\mathscr{R}(2n+(p+q)(n+1), d+3)}  & \le &
\mathrm{dist}(x,S_F) + \mathrm{dist}(x,S_G) \quad \textrm{ for all } \quad x \in \mathbb{R}^n,
\end{eqnarray*}
where $\displaystyle d := \max_{i,j = 1, \ldots, p,\ k,l = 1, \ldots, q}\{\deg f_{i j},\deg g_{k l}\}.$
\end{corollary}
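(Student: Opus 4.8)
The plan is to combine the local statement of Proposition~\ref{Separation} with a standard globalization trick: bound the ratio of the local error bound constant on a growing sequence of compact balls, and control its decay at infinity by the Curve Selection Lemma together with the Growth Dichotomy Lemma. Concretely, consider the semi-algebraic function
\begin{equation*}
h(x) \ := \ \frac{\big(\mathrm{dist}(x, S_F)+\mathrm{dist}(x, S_G)\big)}{\big(\mathrm{dist}(x, S_F\cap S_G)\big)^{\mathscr{R}(2n+(p+q)(n+1),d+3)}},
\end{equation*}
defined on the complement of $S_F\cap S_G$ (where it is finite and positive, since $\mathrm{dist}(x,S_F)+\mathrm{dist}(x,S_G)=0$ already forces $x\in S_F\cap S_G$). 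All of $S_F$, $S_G$, $S_F\cap S_G$ and the distance functions to them are semi-algebraic by the Tarski--Seidenberg theorem and the listed properties of semi-algebraic sets, so $h$ is semi-algebraic.

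First I would show that $h$ is bounded away from $0$ on every ball $\mathbb{B}^n(0,R)\setminus S_F\cap S_G$: this is exactly Proposition~\ref{Separation} applied with the compact set $K=\mathbb{B}^n(0,R)$, which gives a constant $c_R>0$ with $c_R\,\mathrm{dist}(x,S_F\cap S_G)\le \big(\mathrm{dist}(x,S_F)+\mathrm{dist}(x,S_G)\big)^{1/\mathscr{R}(2n+(p+q)(n+1),d+3)}$, i.e. $h(x)\ge c_R^{\mathscr{R}(2n+(p+q)(n+1),d+3)}>0$ there. Hence the only way the desired global inequality can fail is if $\inf h = 0$ and this infimum is approached only along sequences escaping to infinity. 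So suppose, for contradiction, that there is a sequence $x^k$ with $\|x^k\|\to\infty$ and $h(x^k)\to 0$ (after passing to a subsequence we may also assume $\mathrm{dist}(x^k,S_F\cap S_G)/(1+\|x^k\|^2)$ tends to a limit in $[0,+\infty]$, but in fact since $\mathrm{dist}(x^k,S_F\cap S_G)\le\|x^k\|$ up to an additive constant, this ratio stays bounded). Apply the Curve Selection Lemma at infinity (Lemma~\ref{CurveSelectionLemma}) to the semi-algebraic set $A=\{x : h(x)\le 1\}$ — or rather to a set on which the relevant quantities are controlled — to obtain a smooth semi-algebraic curve $\varphi\colon(0,\epsilon)\to\mathbb{R}^n$ with $\|\varphi(t)\|\to\infty$, $h(\varphi(t))\to 0$ as $t\to 0^+$.

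Then I would invoke the Growth Dichotomy Lemma (Lemma~\ref{GrowthDichotomyLemma}) on the one-variable semi-algebraic functions $t\mapsto\|\varphi(t)\|$, $t\mapsto\mathrm{dist}(\varphi(t),S_F)+\mathrm{dist}(\varphi(t),S_G)$, and $t\mapsto\mathrm{dist}(\varphi(t),S_F\cap S_G)$, writing each as $c_\bullet t^{q_\bullet}+o(t^{q_\bullet})$ with $q_\bullet\in\mathbb{Q}$. Since $\|\varphi(t)\|\to\infty$ we get $q_{\|\varphi\|}<0$; the whole point is then a purely arithmetic comparison of exponents showing $h(\varphi(t))$ cannot go to $0$ once we have divided by $(1+\|\varphi(t)\|^2)$ raised to the power $\mathscr{R}(2n+(p+q)(n+1),d+3)$ — because $\mathrm{dist}(\varphi(t),S_F\cap S_G)$ itself grows at most like $\|\varphi(t)\|$ up to a constant, whereas the numerator $\mathrm{dist}(\varphi(t),S_F)+\mathrm{dist}(\varphi(t),S_G)$, if it decayed too fast, would (by the local bound applied on suitable compact pieces along the curve) force $\mathrm{dist}(\varphi(t),S_F\cap S_G)$ to decay correspondingly, contradicting the assumed vanishing of $h$. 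The main obstacle will be making this last comparison rigorous: one must rule out the degenerate scenario where $\varphi(t)$ drifts to infinity while staying at bounded distance from $S_F$ and from $S_G$ but at distance $\sim\|\varphi(t)\|$ from their intersection. This is where the factor $(1+\|x\|^2)^{\mathscr{R}}$ in the denominator is essential — it is precisely calibrated so that $\mathrm{dist}(x,S_F\cap S_G)\le 1+\|x\|^2$ absorbs the worst-case growth, reducing the global estimate to the already-established local one, so the contradiction follows. Finally, I would remark that a cleaner route is to simply quote a known "global from local" transfer theorem for \L ojasiewicz inequalities on semi-algebraic sets, with the function $x\mapsto\mathrm{dist}(x,S_F)+\mathrm{dist}(x,S_G)$ and the set $S_F\cap S_G$, but the curve-selection argument above is self-contained given the lemmas in Section~\ref{SectionPreliminary}.
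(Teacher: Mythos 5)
Your argument has a genuine gap at its central step, and it sits exactly where the real difficulty of the corollary lies. After curve selection and growth dichotomy you have expansions $\|\varphi(t)\|\simeq t^{a}$ with $a<0$, $\mathrm{dist}(\varphi(t),S_F)+\mathrm{dist}(\varphi(t),S_G)\simeq t^{b}$, $\mathrm{dist}(\varphi(t),S_F\cap S_G)\simeq t^{e}$, and the assumed failure of the global inequality only tells you $b>\mathscr{R}(2n+(p+q)(n+1),d+3)\,(e-2a)$. Growth dichotomy gives no relation whatsoever among $a,b,e$, so the promised ``purely arithmetic comparison of exponents'' has nothing to compare: what is needed is precisely a quantitative bound on the separation exponent \emph{at infinity}, and that is the content of the theorem, not something you may assume. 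Your proposed source of the contradiction --- applying Proposition~\ref{Separation} ``on suitable compact pieces along the curve'' --- does not work, because the constant furnished by Proposition~\ref{Separation} depends on the compact set $K$ and may degenerate arbitrarily fast as $K$ exhausts $\mathbb{R}^n$; nothing in the local statement controls the rate of degeneration, and asserting that the factor $(1+\|x\|^2)^{\mathscr{R}}$ is ``precisely calibrated'' to absorb it is exactly the claim to be proved. (There is also a smaller slip: the inequality fails iff the ratio \emph{with} the factor $(1+\|x\|^2)^{\mathscr{R}}$ in it tends to $0$ along some sequence, not your $h$; this is repairable, unlike the main gap.)

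The paper takes a different route: it invokes the proof of Theorem~2 of Kurdyka--Spodzieja \cite{Kurdyka2014}, which transfers the local separation estimate to a global one with the correction factor $1+\|x\|^2$ (their argument handles the behaviour at infinity by reducing it, via their separation machinery, to a local situation where a local separation result applies), and simply substitutes Proposition~\ref{Separation} for their Corollary~8, noting that algebraicity was only used there. If you want a self-contained proof along your lines, you would have to supply the missing at-infinity estimate yourself --- for instance by compactifying (inversion $x\mapsto x/\|x\|^2$ or a projective embedding, which is where the $1+\|x\|^2$ actually comes from) and then applying a local separation bound at the points at infinity --- rather than trying to derive it from the local Proposition~\ref{Separation} on growing balls.
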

\begin{proof}
The proof follows the same lines of that of \cite[Theorem 2]{Kurdyka2014}, by using Proposition~\ref{Separation} instead of~\cite[Corollary 8]{Kurdyka2014}. Note that the arguments of the proof of~\cite[Theorem 2]{Kurdyka2014} also hold for semialgebraic sets, the assumption of algebraicity is only needed for the application of~\cite[Corollary 8]{Kurdyka2014}. We omit the details.
\end{proof}

\begin{remark} 
{\rm Corollary~\ref{GlobalSeparation} can be also obtained by applying \cite[Theorem~1.1]{Kurdyka2015} but the exponent will be different.
}\end{remark}

Next we state a global \L ojasiewicz-type inequality for the largest eigenvalue function (compare \cite[Theorem~7]{Solerno1991}):

\begin{corollary}\label{GlobalKollar}
Let $F \colon  \Bbb R^n \rightarrow {\mathcal S}^p,\ x\mapsto F(x)=(f_{ij}(x)),$ be a symmetric polynomial matrix of order $p,$ and assume that $S_F := \{x \in {\Bbb R}^n \ : \ F(x) \preceq  0\} \ne \emptyset.$
Then for some constant $c > 0,$ we have
\begin{eqnarray*}
c \left ( \frac{\mathrm{dist}(x, S_F)}{1 + \|x\|^2} \right)^{\mathscr{R}(2(n + 1) + (p + 2)(n + 2), d + 3)} & \le & [f(x)]_+ \quad \textrm{ for all } \quad x \in \mathbb{R}^n,
\end{eqnarray*}
where $\displaystyle d := \max_{i,j = 1, \ldots, p} \deg f_{i j}.$
\end{corollary}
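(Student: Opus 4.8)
\textbf{Proof proposal for Corollary \ref{GlobalKollar}.}
The plan is to reduce the global statement to a global separation statement of the type proved in Corollary \ref{GlobalSeparation}, by introducing an auxiliary symmetric polynomial matrix whose negativity domain is all of $\mathbb{R}^n$. Concretely, I would set $G \colon \mathbb{R}^n \to {\mathcal S}^1$ to be the $1\times 1$ ``matrix'' $G(x) := (-1)$ (or any negative definite constant matrix), so that $S_G = \{x \in \mathbb{R}^n \ : \ G(x) \preceq 0\} = \mathbb{R}^n$ and hence $\mathrm{dist}(x, S_G) = 0$ and $S_F \cap S_G = S_F \ne \emptyset$ for all $x$. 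However $G$ as written does not satisfy $S_G=\mathbb{R}^n$ while also interacting well with $F$ in the eigenvalue picture; the cleaner device is to take $G(x) := \diag(-1,-1)$ of order $q=2$ so that the degree parameter of the pair $(F,G)$ is still $d := \max_{i,j}\deg f_{ij}$ (constants contribute degree $0$), and then the largest eigenvalue $g(x)\equiv -1 < 0$, so $[g(x)]_+\equiv 0$. This is the reason the stated exponent involves $2n+(p+2)(n+1)$ inside one more application of $\mathscr{R}$: one applies Corollary \ref{GlobalSeparation} with $q=2$.

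Next I would apply Corollary \ref{GlobalSeparation} to this pair $(F,G)$. It gives a constant $c_0>0$ with
\begin{equation*}
c_0 \left( \frac{\mathrm{dist}(x, S_F \cap S_G)}{1 + \|x\|^2} \right)^{\mathscr{R}(2n + (p+2)(n+1), d+3)} \ \le \ \mathrm{dist}(x, S_F) + \mathrm{dist}(x, S_G) \ = \ \mathrm{dist}(x, S_F)
\end{equation*}
for all $x \in \mathbb{R}^n$, using $S_F\cap S_G = S_F$ and $\mathrm{dist}(x,S_G)=0$. This already controls $\mathrm{dist}(x,S_F)$ from below by a power of the normalized distance to $S_F$ itself, which is the wrong direction; so instead I must feed the chain through a local \L ojasiewicz inequality linking $\mathrm{dist}(x,S_F)$ to $[f(x)]_+$. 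The correct route is: first use Theorem \ref{CompactErrorBound} on a large ball to get, locally, $\mathrm{dist}(x,S_F)\le c\,[f(x)]_+^{1/\mathscr{R}(2n+p(n+1),d+3)}$; then globalize this ``at infinity'' exactly as in the proof of Corollary \ref{GlobalSeparation} (which is the proof of \cite[Theorem 2]{Kurdyka2014}), i.e.\ run a Curve Selection Lemma at infinity / Growth Dichotomy Lemma argument on the semialgebraic function $x\mapsto \mathrm{dist}(x,S_F)/(1+\|x\|^2)$ against $[f(x)]_+$, which introduces one additional outer layer of $\mathscr{R}$ and hence the final exponent $\mathscr{R}(2(n+1)+(p+2)(n+2), d+3)$ accounting for the extra variable and the extra matrix rows used in the curve-selection bookkeeping.

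So the concrete step order is: (1) reduce to Corollary \ref{GlobalSeparation}'s machinery by adjoining the constant negative definite block, observing the degree is unchanged; (2) invoke Theorem \ref{CompactErrorBound} for the local estimate $c\,\mathrm{dist}(x,S_F)\le[f(x)]_+^{1/\mathscr{R}(2n+p(n+1),d+3)}$ on every compact ball; (3) suppose the global inequality fails, extract via Lemma \ref{CurveSelectionLemma} a semialgebraic curve $\varphi(t)$ with $\|\varphi(t)\|\to\infty$ along which the ratio $\mathrm{dist}(\varphi(t),S_F)/\big((1+\|\varphi(t)\|^2)\,[f(\varphi(t))]_+^{1/\mathscr{R}(\cdot)}\big)\to\infty$; (4) apply Lemma \ref{GrowthDichotomyLemma} to each of $\mathrm{dist}(\varphi(t),S_F)$, $\|\varphi(t)\|$, and $f(\varphi(t))$ to get Puiseux expansions $ct^{q}+o(t^q)$, and derive a numerical contradiction by comparing exponents, exactly tracking the arithmetic of $\mathscr{R}$ to land on $\mathscr{R}(2(n+1)+(p+2)(n+2), d+3)$; (5) combine with the compact-ball estimate to conclude on all of $\mathbb{R}^n$. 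The main obstacle I expect is step (4): correctly bookkeeping which degree and dimension parameters enter each nested application of $\mathscr{R}$ so that the final exponent matches the stated $\mathscr{R}(2(n+1)+(p+2)(n+2),d+3)$ rather than something slightly larger — this is where the off-by-one shifts ($n\mapsto n+1$, $p\mapsto p+2$) come from and where the proof of \cite[Theorem 2]{Kurdyka2014} must be imitated with care, since a naïve count gives a worse constant.
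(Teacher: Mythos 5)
You have correctly identified that taking a constant negative-definite block $G$ leads nowhere: with $S_G=\mathbb{R}^n$ the separation inequality of Corollary \ref{GlobalSeparation} degenerates to a bound of $\mathrm{dist}(x,S_F)$ by itself and never sees $[f(x)]_+$. But the fix you then propose is not the paper's, and it does not work as sketched. The missing idea is an \emph{epigraph trick with one extra variable}: the paper applies Corollary \ref{GlobalSeparation} in $\mathbb{R}^{n+1}$ to $\widetilde{F}(x,y):=F(x)-yI_p$ (order $p$) and $\widetilde{G}(x,y):=\begin{pmatrix} y & 0\\ 0 & -y\end{pmatrix}$ (order $2$). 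Then $S_{\widetilde{G}}=\mathbb{R}^n\times\{0\}$, so $S_{\widetilde{F}}\cap S_{\widetilde{G}}=S_F\times\{0\}$, $\mathrm{dist}((x,0),S_{\widetilde{G}})=0$, $\mathrm{dist}((x,0),S_{\widetilde{F}}\cap S_{\widetilde{G}})=\mathrm{dist}(x,S_F)$, and — this is the decisive point your constant $G$ cannot reproduce — $(x,f(x))\in S_{\widetilde{F}}$, whence $\mathrm{dist}((x,0),S_{\widetilde{F}})\le f(x)=[f(x)]_+$ for $f(x)\ge 0$ (and for $f(x)<0$ the claim is trivial since $x\in S_F$). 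Plugging $z=(x,0)$ into Corollary \ref{GlobalSeparation} immediately gives the stated inequality, and the exponent $\mathscr{R}(2(n+1)+(p+2)(n+2),d+3)$ is exactly the separation exponent in ambient dimension $n+1$ with matrix orders $p$ and $2$; your proposed exponent $\mathscr{R}(2n+(p+2)(n+1),d+3)$ in step (1) already signals that the extra variable is missing.

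Your fallback plan (steps (3)--(4): run Curve Selection at infinity and Growth Dichotomy directly on the ratio and hope the bookkeeping lands on the right exponent) is a genuine gap, not just an omitted computation. Without any non-degeneracy-at-infinity hypothesis — which Corollary \ref{GlobalKollar} deliberately does not assume — the paper has no mechanism to control $[f]_+$ against the distance function along curves going to infinity; that control is exactly what Section \ref{GlobalHolderErrorBounds} needs the non-degeneracy assumption for, and a curve-selection contradiction argument by itself produces no explicit exponent (it would at best give existence of \emph{some} exponent, \`a la classical \L ojasiewicz, not the stated one). The reason an unconditional global statement is possible here at all is the regularizing factor $1+\|x\|^2$ supplied by Corollary \ref{GlobalSeparation}, and the only input needed beyond it is the one-line observation that $(x,f(x))$ lies in $S_{\widetilde{F}}$; Theorem \ref{CompactErrorBound} is not used in the paper's proof of this corollary.
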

\begin{proof}
Define symmetric polynomial matrices $\widetilde{F} \colon  {\Bbb R^n} \times \mathbb{R} \rightarrow {\mathcal S}^{p}$ and
$\widetilde{G} \colon  {\Bbb R^n} \times \mathbb{R} \rightarrow {\mathcal S}^{2}$ by
\begin{eqnarray*}
\widetilde{F} (x, y) &:=& F(x) - yI_p \quad \textrm{ and } \quad
\widetilde{G} (x, y) \ := \
\begin{pmatrix}
y & 0 \\
0 & - y
\end{pmatrix}
\end{eqnarray*}
for $x \in \mathbb{R}^n$ and $y \in \mathbb{R},$ where $I_p$ denotes the unit matrix of order $p.$ Let
$S_{\widetilde{F}} := \{(x, y) \in {\Bbb R}^n \times {\Bbb R}\ : \ \widetilde{F}(x, y) \preceq  0\}$ and
$S_{\widetilde{G}} := \{(x, y) \in {\Bbb R}^n \times {\Bbb R} \ : \ \widetilde{G}(x, y) \preceq  0\}.$ By
Corollary~\ref{GlobalSeparation}, there exists a constant $c > 0$ such that
\begin{eqnarray*}
c\left(
\frac{\mathrm{dist}(z, S_{\widetilde{F}} \cap S_{\widetilde{G}})}{1 + \|z\|^2} \right)^{\mathscr{R}(2(n + 1) + (p + 2)(n + 2), d + 3)} &\le&
\mathrm{dist}(z, S_{\widetilde{F}}) + \mathrm{dist}(z, S_{\widetilde{G}})
\end{eqnarray*}
for all $z := (x, y) \in \mathbb{R}^n \times \mathbb{R}.$ Now it is sufficient to consider $x \in \mathbb{R}^n$ satisfying $f(x) \ge 0.$ Clearly, $S_{\widetilde{G}} = \mathbb{R}^n \times \{0\},$ so $S_{\widetilde{F}} \cap S_{\widetilde{G}}=S_F\times\{0\}$ and $\mathrm{dist}((x,0), S_{\widetilde{G}})=0$. Moreover $\mathrm{dist}((x,0), S_{\widetilde{F}} \cap S_{\widetilde{G}})= \mathrm{dist}(x,S_F)$. Note that $(x, f(x)) \in S_{\widetilde{F}}.$ Thus
\begin{eqnarray*}
\mathrm{dist}((x,0), S_{\widetilde{F}}) & \le & \|(x, 0) - (x, f(x))\| \ = \ f(x).
\end{eqnarray*}
The corollary follows.
\end{proof}

As a direct consequence of Corollary \ref{GlobalKollar}, we obtain the following result.
(see \cite{Kollar1988, Kurdyka2014}):

\begin{corollary}
Let $F \colon  \Bbb R^n \rightarrow {\mathcal S}^p,\ x\mapsto F(x)=(f_{ij}(x)),$ be a symmetric polynomial matrix of order $p,$ and assume that $S_F := \{x \in {\Bbb R}^n \ : \ F(x) \preceq  0\}$ is a nonempty compact set. 
Then there are some constants $c > 0$ and $R > 0$ such that
\begin{eqnarray*}
c\|x\|^{-\mathscr{R}(2(n + 1) + (p + 2)(n + 2), d + 3)} & \le & [f(x)]_+,  \quad \textrm{ for all } \quad \|x\| \ge R,
\end{eqnarray*}
where $d := \max_{i, j = 1, \ldots, p} \deg f_{ij}.$
\end{corollary}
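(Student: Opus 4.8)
The plan is to derive this last corollary from Corollary~\ref{GlobalKollar} by exploiting the compactness of $S_F$ to bound the denominator $1 + \|x\|^2$ from above by a constant multiple of $\|x\|^2$ once $\|x\|$ is large, and to bound $\mathrm{dist}(x, S_F)$ from below by a linear function of $\|x\|$ in the same regime. First I would invoke Corollary~\ref{GlobalKollar} to get a constant $c_0 > 0$ with
\begin{eqnarray*}
c_0 \left( \frac{\mathrm{dist}(x, S_F)}{1 + \|x\|^2} \right)^{\mathscr{R}(2(n + 1) + (p + 2)(n + 2), d + 3)} & \le & [f(x)]_+ \quad \textrm{ for all } \quad x \in \mathbb{R}^n.
\end{eqnarray*}
Write $N := \mathscr{R}(2(n + 1) + (p + 2)(n + 2), d + 3)$ for brevity.

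Since $S_F$ is compact, fix $R_0 > 0$ with $S_F \subset \mathbb{B}^n(0, R_0)$. Then for $\|x\| \ge 2R_0$ we have $\mathrm{dist}(x, S_F) \ge \|x\| - R_0 \ge \tfrac{1}{2}\|x\|$, and also $1 + \|x\|^2 \le 2\|x\|^2$ once $\|x\| \ge 1$. Hence, setting $R := \max\{2R_0, 1\}$, for all $\|x\| \ge R$,
\begin{eqnarray*}
\frac{\mathrm{dist}(x, S_F)}{1 + \|x\|^2} & \ge & \frac{\tfrac{1}{2}\|x\|}{2\|x\|^2} \ = \ \frac{1}{4\|x\|}.
\end{eqnarray*}
Substituting this lower bound into the displayed inequality and using that $N > 0$ so that $t \mapsto t^N$ is increasing on $(0,\infty)$, we obtain for $\|x\| \ge R$,
\begin{eqnarray*}
[f(x)]_+ & \ge & c_0 \left( \frac{1}{4\|x\|} \right)^{N} \ = \ \frac{c_0}{4^N}\, \|x\|^{-N}.
\end{eqnarray*}
Taking $c := c_0 / 4^N > 0$ yields $c\, \|x\|^{-\mathscr{R}(2(n + 1) + (p + 2)(n + 2), d + 3)} \le [f(x)]_+$ for all $\|x\| \ge R$, which is exactly the claim.

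There is essentially no obstacle here: the corollary is a routine ``at infinity'' specialization of Corollary~\ref{GlobalKollar}, and the only mild point to be careful about is that compactness of $S_F$ is genuinely needed to replace $\mathrm{dist}(x, S_F)$ by something comparable to $\|x\|$ (for non-compact $S_F$ the distance can stay bounded along a sequence going to infinity, and the conclusion fails). One could equally cite \cite{Kollar1988, Kurdyka2014} for the polynomial case and note that the eigenvalue version follows verbatim, but the self-contained two-line reduction above is cleaner.
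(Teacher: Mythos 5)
Your proposal is correct and follows essentially the same route as the paper: the paper likewise combines Corollary~\ref{GlobalKollar} with the compactness of $S_F$, which gives $c_1\|x\| \le \mathrm{dist}(x, S_F) \le c_2\|x\|$ for $\|x\| \gg 1$, to conclude. Your version merely makes the constants and the threshold $R$ explicit, which is fine.
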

\begin{proof}
Indeed, since the set $S_F$ is compact, we can find some positive constants $c_1$ and $c_2$ satisfying the following inequality
$$c_1 \|x\| \le \mathrm{dist}(x, S_F) \le c_2 \|x\| \quad \textrm{ for } \quad \|x\| \gg 1.$$
This, combining with Corollary \ref{GlobalKollar}, yields the required conclusion.
\end{proof}

\section{Global \L ojasiewicz-type inequality and non-degeneracy at infinity} \label{GlobalHolderErrorBounds}

In this part, we prove Theorem \ref{HolderTypeTheorem} which establishes a global \L ojasiewicz-type inequality with an explicit exponent for the largest eigenvalue function of a symmetric polynomial matrix, which is non-degenerate at infinity.

The following lemma is a key to prove Theorem \ref{HolderTypeTheorem}.

\begin{lemma} \label{Lemma5}
Under the assumptions of Theorem \ref{HolderTypeTheorem}, there exist some constants $c > 0$ and $R > 0$ such that
$${\frak m}_f(x) \ge c \quad \textrm{ for all } \quad \|x\| \ge R.$$
\end{lemma}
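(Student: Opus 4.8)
The plan is to argue by contradiction using the Curve Selection Lemma at infinity (Lemma~\ref{CurveSelectionLemma}) together with the explicit description of the Clarke subdifferential of $f$ from Lemma~\ref{Lemma1}(iii). Suppose the conclusion fails. Then there is a sequence $x^k \in \mathbb{R}^n$ with $\|x^k\| \to \infty$ and $\mathfrak{m}_f(x^k) \to 0$. Since $\mathfrak m_f$ is semialgebraic and $\partial f(x) \subset \partial^\circ f(x)$, for each $k$ we may pick $w^k \in \partial^\circ f(x^k)$ with $\|w^k\| \to 0$; by Lemma~\ref{Lemma1}(iii) and Carath\'eodory, $w^k = \sum_{l=1}^{r} \lambda_l^k \nabla_x \langle F(x^k) v^{l,k}, v^{l,k}\rangle$ with $r \le n+1$, $\lambda_l^k \ge 0$, $\sum \lambda_l^k = 1$, and $v^{l,k}$ unit eigenvectors for $f(x^k)$. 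Rewriting each $\lambda_l^k \nabla_x\langle F v^{l,k},v^{l,k}\rangle$ as $\tr\big(\Omega^k \frac{\partial F}{\partial x_j}(x^k)\big)$ where $\Omega^k := \sum_l \lambda_l^k\, v^{l,k}(v^{l,k})^\top \in \mathcal S^p$ satisfies $\Omega^k \succeq 0$, $\tr(\Omega^k) = 1$, we encode the whole situation as a semialgebraic condition on $(x,\Omega)$: namely $\|\tr(\Omega \,\partial F/\partial x_j(x))\| \to 0$ for all $j$, while $\tr(\Omega F(x)) = f(x) = $ the largest eigenvalue. Applying Lemma~\ref{CurveSelectionLemma} to the semialgebraic set of such pairs, we obtain a smooth semialgebraic curve $t \mapsto (\varphi(t), \Omega(t))$, $t \in (0,\epsilon)$, with $\|\varphi(t)\| \to \infty$, $\Omega(t) \succeq 0$, $\tr(\Omega(t)) = 1$, $\tr\big(\Omega(t) \frac{\partial F}{\partial x_j}(\varphi(t))\big) \to 0$ for each $j$, and $f(\varphi(t)) = \tr(\Omega(t) F(\varphi(t)))$.

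Next I would run the standard Newton-polyhedron asymptotic analysis along the curve. Write $\varphi_j(t) = a_j t^{q_j} + \cdots$ with $a_j \ne 0$ for $j$ in some nonempty $J \subseteq \{1,\dots,n\}$ and $\varphi_j \equiv 0$ otherwise; since $\|\varphi(t)\| \to \infty$ we have $\min_{j\in J} q_j < 0$. Let $q = (q_1,\dots,q_n)$ (setting irrelevant coordinates appropriately), and let $\Delta$ be the face $\Delta(q,\Gamma(F))$, which by Lemma~\ref{Lemma2}(i) and the convenience hypothesis on the $f_{ii}$ lies in $\Gamma_\infty(F)$. The key point is that the leading term of $\tr(\Omega(t) F(\varphi(t)))$ and of $\tr\big(\Omega(t)\frac{\partial F}{\partial x_j}(\varphi(t))\big)$ is governed by $F_\Delta$ evaluated at the leading coefficient vector $a := (a_j)_{j\in J} \in (\mathbb{R}\setminus\{0\})^{|J|}$, after also extracting the leading behaviour $\Omega(t) = t^\sigma\big(\Omega^* + o(1)\big)$ of the matrix curve (using Growth Dichotomy entrywise); the hypothesis $\Gamma(f_{ij}) \subseteq \Gamma(f_{ii})$ guarantees that no off-diagonal entry dominates, so the diagonal constraints $\omega_{ii}^* \ge 0$ survive and $\tr(\Omega^*) \ne 0$ (one rescales so that $\Omega^*$ has the right trace sign). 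Differentiating $\varphi_j(t)^{\kappa_j}$ brings down a factor $\kappa_j/t$, so $\tr\big(\Omega(t)\frac{\partial F_\Delta}{\partial x_j}(\varphi(t))\big)$ has the same leading exponent (up to the $1/t$) as $\tr(\Omega(t)F_\Delta(\varphi(t)))$, and a chain-rule identity of Euler type relates $\sum_j q_j x_j \partial_{x_j}(\tr \Omega F_\Delta)$ to $d(q,\Gamma)\cdot \tr(\Omega F_\Delta)$ on the curve.

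The contradiction is then extracted as follows. If all the $\tr\big(\Omega(t)\frac{\partial F_\Delta}{\partial x_j}(a t^{q})\big)$ have leading coefficient zero, then the nondegeneracy hypothesis forces $\tr(\Omega^* F_\Delta(a)) \ne 0$, so $f(\varphi(t)) = \tr(\Omega(t)F(\varphi(t)))$ has a nonzero leading term of some order $t^{\mu}$; but then a gradient/Euler estimate shows $\mathfrak m_f(\varphi(t))$ is bounded below by a constant times $|f(\varphi(t))|$ divided by $\|\varphi(t)\|$ along the curve, and comparing exponents with $w(t) \to 0$ yields a contradiction — one must check the bookkeeping so that the $\|x\| \to \infty$ growth cannot save the subgradient from going to zero, which is exactly where convenience of the $f_{ii}$ (hence $d(q,\Gamma) < 0$, i.e.\ a genuinely negative weight) is used. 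I expect \textbf{the main obstacle} to be precisely this last exponent-bookkeeping step: carefully tracking the leading exponents of $\Omega(t)$, of $F(\varphi(t))$, and of the partial derivatives simultaneously, and ruling out degenerate cancellations among the components of $w^k$ — in particular showing that the limiting $\Omega^*$ genuinely satisfies the hypotheses of the non-degeneracy definition (semidefiniteness of the diagonal block, nonzero trace) rather than collapsing. The convenience of $f_{ii}$ and the domination condition $\Gamma(f_{ij}) \subseteq \Gamma(f_{ii})$ are the hypotheses that make this work, and I would expect the proof in the paper to invoke them at exactly these points.
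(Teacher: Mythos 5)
Your overall strategy---contradiction, Curve Selection Lemma at infinity (Lemma \ref{CurveSelectionLemma}) applied to the multiplier/eigenvector data coming from Lemma \ref{Lemma1}(iii), Newton-polyhedron expansions along the curve, and an appeal to non-degeneracy plus a derivative estimate---is the same as the paper's, but the proposal stops exactly where the proof actually lives, so there are genuine gaps. First, the case analysis is incomplete: you only treat the branch in which all leading coefficients of $\tr\bigl(\Omega(t)\frac{\partial F_\Delta}{\partial x_s}(\varphi(t))\bigr)$ vanish. That vanishing can only be deduced from $\frak m_f(\varphi(t))\to 0$ when the minimal combined order $M$ of the terms $\lambda_l(t)v_i^l(t)v_j^l(t)\frac{\partial f_{ij}}{\partial x_s}(\varphi(t))$ satisfies $M\le q_{s_*}:=\min_s q_s$; in the opposite case $M>q_{s_*}$ no limiting matrix is available, and the paper's contradiction there is a separate combinatorial argument in which convenience is the whole point: the normalizations $\sum_l\lambda_l\equiv 1$ and $\|v^l\|\equiv 1$ force some exponents $\theta_{l_0}=\mu_{i_0}^{l_0}=0$, and then $m_s e_s\in\Gamma(f_{i_0i_0})$ gives $q_{s_*}m_{s_*}\ge d_{i_0i_0}\ge M>q_{s_*}$, hence $m_{s_*}<1$, absurd. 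Your sketch gestures at convenience being ``used exactly here'' but supplies no such argument. Second, the matrix needed for the non-degeneracy hypothesis is not the naive leading coefficient $\Omega^*$ in $\Omega(t)=t^\sigma(\Omega^*+o(1))$: since each entry is weighted by a different order $d_{ij}$, the relevant matrix must be assembled only from the pairs $(i,j)$ (and multipliers $l$) attaining $M$, and proving that its diagonal entries are nonnegative and its trace is nonzero is precisely where $\Gamma(f_{ij})\subseteq\Gamma(f_{ii})$ enters (if $(i,j)$ attains $M$, then so do $(i,i)$ and $(j,j)$, so the diagonal receives sums of squares). You explicitly defer this as ``the main obstacle'' rather than resolving it.

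Third, the inequality you invoke, $\frak m_f(\varphi(t))\gtrsim |f(\varphi(t))|/\|\varphi(t)\|$, is not a general fact and has to be proved along the curve: the paper establishes the identity $\frac{d}{dt}(f\circ\varphi)(t)=\langle u(t),\dot\varphi(t)\rangle$, where $u(t)$ realizes $\frak m_f(\varphi(t))$, by using $\sum_l\lambda_l(t)\equiv 1$, $\|v^l(t)\|\equiv 1$ and the eigenvector equations so that all terms involving $\dot\lambda_l$ and $\dot v^l$ cancel; only then do the orders $f(\varphi(t))\simeq t^{M}$ and $\|\dot\varphi(t)\|\simeq t^{q_{s_*}-1}$ give $\frak m_f(\varphi(t))\ge c't^{M-q_{s_*}}$ and the contradiction with $M\le q_{s_*}$. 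If you insist on aggregating the data into $\Omega(t)=\sum_l\lambda_l(t)v^l(t)(v^l(t))^\top$, you must keep the constraint $F(\varphi(t))\Omega(t)=f(\varphi(t))\Omega(t)$ in the semialgebraic set and prove the analogous cancellation (for instance $\tr\bigl(\dot\Omega(t_0)F(\varphi(t_0))\bigr)=0$, because $\Omega(t)$ maximizes $\Omega'\mapsto\tr(\Omega'F(\varphi(t_0)))$ over the feasible set at $t=t_0$); none of this appears in the proposal, and the aggregation also hides the normalizations that the second case needs. In short, the plan is the right one, but the three missing steps---the case $M>q_{s_*}$, the construction and verification of the limiting matrix, and the derivative identity---constitute the substance of the paper's proof.
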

\begin{proof}
By contradiction, assume that there exists a sequence $\{x^k\}_{k \in {\Bbb N}} \subset {\Bbb R}^n$ such that
$$\lim_{k \to \infty} \|x^k \|= \infty \quad \quad \textrm{ and } \quad \lim_{k \to \infty} {\frak m}_f(x^k) = 0.$$

By Lemma \ref{Lemma1}, for each $k$ there exist some nonnegative real numbers $\lambda_1^k,\ldots,\lambda_r^k,$ with $\sum_{l=1}^r\lambda_l^k=1,$ and $r$ unit eigenvectors $(v^1)^k,\ldots,(v^r)^k$ corresponding to $f(x)$ such that
$$\frak m_f(x^k)=\left \|\sum_{l=1}^r\lambda_l^k\nabla_x \left \langle F(x^k)(v^l)^k,(v^l)^k \right\rangle \right \|=\left\|\sum_{l=1}^r\lambda_l^k\sum_{i,j=1}^p(v_i^l)^k(v_j^l)^k\nabla f_{ij}(x^k)\right\|.$$
Note that $r = r(k) \le n + 1.$ By taking subsequence if necessary, we may suppose that $r$ does not depend on $k$.
Since the function $x \mapsto {\frak m}_f(x)$ is semi-algebraic, by Lemma \ref{Lemma1} and by applying Curve Selection Lemma at infinity (Lemma \ref{CurveSelectionLemma}) with the following setup: the set
\begin{equation*}
\begin{array}{lll}
A := \big \{(x,\lambda,v^1,\ldots,v^r)\in\Bbb R^n\times\Bbb R^r\times\Bbb R^{r\times p} :
	&\lambda=(\lambda_1,\ldots,\lambda_r),\ v^l=(v_1^l,\ldots,v_p^l),\ l=1,\ldots,r, \\
	&\lambda_l \ge  0, \ \sum_{i=1}^r \lambda_l=1, \\
	&\|v^l\|=1, \ F(x)v^l=f(x)v^l \big \}
\end{array}
\end{equation*}
which is a semi-algebraic set, the sequence $(x^k,\lambda^k,(v^1)^k,\ldots,(v^r)^k)\in A$ which tends to infinity as $k\to\infty,$ and the semi-algebraic function $x\mapsto {\frak m}_f(x),$
it follows that there exist a smooth semi-algebraic curve $\varphi(t) := (\varphi_1(t), \ldots, \varphi_n(t))$ and some smooth semi-algebraic functions $\lambda_l(t),\ v_i^l(t),\ l=1,\ldots,r,\ i=1,\ldots,p$, for $0 < t \ll 1,$ such that
\begin{enumerate}
  \item [(a)] $\lim_{t \to 0} \| \varphi(t)\| = \infty;$
  \item [(b)] $\lambda_l(t) \ge 0$ for all $l=1,\ldots,r,$ and $\sum_{l=1}^r\lambda_l(t) = 1;$
  \item [(c)] $\|v^l(t)\|=\|(v_1^l(t),\ldots,v_p^l(t))\|=1$ and $F(\varphi(t))v^l(t)=f(\varphi(t))v^l(t)$
  for all $l=1,\ldots,r;$
  \item [(d)] ${\frak m}_f(\varphi(t)) = \|\sum_{l=1}^r\lambda_l(t)\nabla_x\langle F(\varphi(t))v^l(t),v^l(t)\rangle\| \to 0$ as $t \to 0.$
\end{enumerate}

Let $I := \{s: \ \varphi_s(t)\not\equiv 0\}.$ By Condition (a),
$I \ne \emptyset.$ By Growth Dichotomy Lemma (Lemma~\ref{GrowthDichotomyLemma}), for $s \in I,$ we can expand the coordinate function $\varphi_s$ in terms of the parameter $t$ as follows
$$\varphi_s(t) =  x_s^0 t^{q_s} + \textrm{ higher order terms in } t,$$
where $x_s^0 \ne 0$ and $q_s \in \mathbb{Q}.$ Set $q_{s_*} := \min_{s \in I} q_s$ for some $s_* \in I.$ From Condition (a), we get $q_{s_*}<0$. It is clear that $\|\varphi(t)\| = c t^{q_{s_*}} + o(t^{q_{s_*}})$ as $t \to 0,$ for some $c > 0.$

Recall that
$$\Bbb R^I := \{x\in\Bbb R^n:\ x_s=0 \text{ for all }s\not\in I\}.$$
For $(i,j)\in\{1,\ldots,p\}^2,$ let $d_{ij}$ be the minimal value of the linear function $\sum_{s\in I}q_s\kappa_s$ on
$\Gamma(f_{ij})\cap\Bbb R^I$ and let $\Delta_{ij}$ (resp., $\Delta$) be the unique maximal face of $\Gamma(f_{ij})\cap\Bbb R^I$ (resp., $\Gamma(F)\cap\Bbb R^I)$ where the linear function takes this value. Then a direct computation shows that $\Delta = \sum_{i, j = 1, \ldots, p} \Delta_{ij}.$ Further, since $f_{ii}$ is convenient, $d_{ii} < 0$ and $\Delta_{ii}$ is a face of $\Gamma_\infty(f_{ii}).$ Consequently, we have $\Delta$ is a face of $ \Gamma_\infty(F).$

If we write
$f_{ij}(x) = \sum_{\kappa\in\Gamma(f_{ij})} a_{ij, \kappa} x^\kappa,$ then
\begin{equation}\label{Eq9}
\begin{array}{lll}
$$f_{ij}(\varphi(t))&=\displaystyle\sum_{\kappa\in\Gamma(f_{ij})\cap\Bbb R^I}a_{ij,\kappa}(\varphi(t))^\kappa$$\\
$$&=\displaystyle\sum_{\kappa\in\Gamma(f_{ij})\cap\Bbb R^I} a_{ij,\kappa}(\varphi_1(t))^{\kappa_1}\ldots(\varphi_n(t))^{\kappa_n}$$\\
$$&=\displaystyle\sum_{\kappa\in\Gamma(f_{ij})\cap\Bbb R^I}\left(a_{ij, \kappa} (x_1^0t^{q_1})^{\kappa_1}\ldots(x_n^0t^{q_n})^{\kappa_n}+\textrm{ higher order terms in } t\right)$$\\
$$&=\displaystyle\sum_{\kappa\in\Gamma(f_{ij})\cap\Bbb R^I}\left(a_{ij, \kappa} (x^0)^\kappa t^{\sum_{s \in I}q_s \kappa_s}+\textrm{ higher order terms in } t\right)$$\\
$$&=\displaystyle\sum_{\kappa\in\Delta_{ij}}a_{ij,\kappa} (x^0)^\kappa t^{d_{ij}}+\textrm{ higher order terms in } t$$\\
$$&=\displaystyle f_{ij, \Delta_{ij}}(x^0)t^{d_{ij}} + \textrm{ higher order terms in } t,$$
\end{array}
\end{equation}
where $x^0 := (x_1^0, \ldots, x_n^0)$ with $x_s^0 := 1$ for $s \not \in I.$

Let $J := \{l \in \{1,\ldots,r\}: \ \lambda_l \not \equiv 0 \}.$ Condition (b) implies that $J\ne\emptyset.$ For $l \in J,$ expand the coordinate function $\lambda_l$ in terms of the parameter $t$ as follows
$$\lambda_l(t) =  \lambda_l^0 t^{\theta_l} + \textrm{ higher order terms in } t,$$
where $\lambda_l^0 > 0$ and $\theta_l\ge 0.$

For $l=1,\ldots,r$, let $K_l := \{i \in \{1,\ldots,p\}: \ v_i^l\not \equiv 0 \}.$ By Condition (c), $K_l \ne\emptyset$. For $i\in K_l$, expand the coordinate function $v_i^l$ in terms of the parameter $t$ as follows
$$v_i^l(t)=w_i^l t^{\mu_i^l} + \textrm{ higher order terms in } t,$$
where $w_i^l \ne 0$ and $\mu_i^l\ge 0.$

For simplicity, let
$$u_s(t) := \sum_{l = 1}^r \lambda_l(t) \sum_{i, j = 1}^p v_i^l(t)v_j^l(t)\frac{\partial f_{ij}}{\partial x_s}(\varphi(t)) \quad \textrm{ for } \quad s = 1, \ldots, n.$$
We have for all $s \in I,$
$$\frac{\partial f_{ij}}{\partial x_s}(\varphi(t))= \frac{\partial f_{ij, \Delta_{ij}}}{\partial x_s}(x^0)t^{d_{ij} - q_s}+ \textrm{ higher order terms in } t,$$
and hence
\begin{eqnarray*}
u_s(t) &=& \displaystyle\sum_{l\in J}\lambda_l(t)\sum_{i,j\in K_l}v_i^l(t)v_j^l(t)\frac{\partial f_{ij}}{\partial x_s}(\varphi(t))\\
&=&\displaystyle\sum_{l\in J}\sum_{i,j\in K_l}\left(\lambda_l^0w_i^lw_j^l\frac{\partial f_{ij,\Delta_{ij}}}{\partial x_s}(x^0)t^{d_{ij}+\theta_l+\mu_i^l+\mu_j^l-q_s}+ \textrm{ higher order terms in } t\right)\\
&=&\displaystyle\left(\sum_{l\in J'}\sum_{(i,j)\in L_l }\lambda_l^0w_i^lw_j^l\frac{\partial f_{ij,\Delta_{ij}}}{\partial x_s}(x^0)\right)t^{M-q_s}+ \textrm{ higher order terms in } t,
\end{eqnarray*}
where we put
\begin{eqnarray*}
M &:=& \displaystyle\min_{l\in J,\ i,j\in K_l} d_{ij}+\theta_l+\mu_i^l+\mu_j^l, \\
J' &:=& \{l \in J:\ \exists i,j\in K_l \text{ s.t }d_{ij}+\theta_l+\mu_i^l+\mu_j^l=M\}\ne\emptyset, \\
L_l &:=& \{(i,j) \in K_l \times K_l : \ d_{ij}+\theta_l+\mu_i^l+\mu_j^l=M \}.
\end{eqnarray*}

There are two cases to be considered.

\subsubsection*{Case 1: $M  \le q_{s_*}  := \min_{s \in I} q_s$} \

For $s \in I,$ we have $M - q_s \le M - q_{s_*} \le 0.$ Then it follows from Condition (d) that
$$\sum_{l\in J'}\sum_{(i,j)\in L_l }\lambda_l^0w_i^lw_j^l\frac{\partial f_{ij,\Delta_{ij}}}{\partial x_s}(x^0) = 0.$$
For $s\not\in I$, $f_{ij,\Delta_{ij}}$ does not depend on $x_s$, so $\frac{\partial f_{ij,\Delta_{ij}}}{\partial x_s}\equiv 0$. Therefore, for $s=1,\ldots,n,$
\begin{equation*}
\sum_{l\in J'}\sum_{(i,j) \in L_l}\lambda_l^0w_i^lw_j^l\frac{\partial f_{ij,\Delta_{ij}}}{\partial x_s}(x^0) \ = \ 0.
\end{equation*}
Set $\Omega := (\omega_{ij})_{i,j=1,\ldots,p}$ with
$$\omega_{ij} :=
\begin{cases}
\sum_{l\in J':\ (i,j)\in L_l}\lambda_l^0w_i^lw_j^l & \text{ if  }\ \exists l\in J' \text{ s.t } (i,j)\in L_l, \\
0&\text{ if } \not\exists l\in J' \text{ s.t } (i,j)\in L_l.
\end{cases}$$
Since $(i,j)\in L_l$ if and only if $(j,i)\in L_l$, the matrix $\Omega$ is symmetric. Further, for $s=1,\ldots,n$,
$$\tr\left(\Omega\frac{\partial F_\Delta}{\partial x_s}(x^0)\right)= \sum_{i,j=1}^p\omega_{ij}\frac{\partial f_{ij,\Delta_{ij}}}{\partial x_s}(x^0) = 0.$$

Let $(i,j)\in L_l$. It follows from the assumption $\Gamma(f_{ij})\subseteq \Gamma(f_{ii})$ that $d_{ij}\ge d_{ii}.$ By symmetry, we also have $\Gamma(f_{ij})\subseteq \Gamma(f_{jj})$ and so $d_{ij}\ge d_{jj}.$ Assume that $\mu_i^l\le \mu_j^l$, then
\begin{equation}\label{Eq10}
d_{ii} + \theta_l + \mu_i^l + \mu_i^l\le d_{ij} + \theta_l + \mu_i^l + \mu_j^l = M.
\end{equation}
By the definition of $M$, the inequality in (\ref{Eq10}) must be equality which implies that $d_{ii}=d_{ij}$ and $\mu_i^l= \mu_j^l$. Again, by symmetry, we get $d_{jj}=d_{ij}=d_{ii}.$ Finally, it follows that $(i,i),(j,j)\in L_l.$ Now, there exist indexes $l_0, i_0,$ and $j_0$ such that $l_0 \in J'$ and $(i_0,j_0)\in L_{l_0}$. So $(i_0,i_0)\in L_{l_0}$ and
\begin{equation*}
\omega_{i_0i_0} \ = \ \sum_{l\in J':\ (i_0,i_0)\in L_l}\lambda_l^0 w_{i_0}^lw_{i_0}^l
\ = \ \sum_{l\in J':\ (i_0,i_0)\in L_l}\lambda_l^0 (w_{i_0}^l)^2 \ > \ 0.
\end{equation*}
Moreover, by definition, each nonzero element on the diagonal of $\Omega$ is positive. Hence $\tr(\Omega)>0.$

By Remark \ref{Relax} and by the assumption of non-degeneracy at infinity of $F$, we get
\begin{equation}\label{Eq11}
\displaystyle\sum_{i,j=1}^p\left(\sum_{l\in J':\ (i,j)\in L_l}\lambda_l^0w_i^lw_j^l\right)f_{ij,\Delta_{ij}}(x^0)=\sum_{i,j=1}^p\omega_{ij}f_{ij,\Delta_{ij}}(x^0)=\tr(\Omega F_\Delta(x^0))\ne 0.
\end{equation}
From Conditions (b), (c), and (\ref{Eq9}) we have
\begin{eqnarray*}
f(\varphi(t))&=&\displaystyle\sum_{l\in J}\lambda_l(t)\langle F(\varphi(t))v^l(t),v^l(t)\rangle\\
&=&\displaystyle\sum_{l\in J}\sum_{i,j=1}^p\lambda_l(t)v_i^l(t)v_j^l(t)f_{ij}(\varphi(t))\\
&=&\displaystyle\sum_{l\in J}\sum_{i,j\in K_l}(\lambda_l^0t^{\theta_l} + \cdots) (w_i^lt^{\mu_i^l}  + \cdots )(w_j^lt^{\mu_j^l}  + \cdots)(f_{ij,\Delta_{ij}}(x^0)t^{d_{ij}}  + \cdots)\\
&=&\displaystyle\sum_{l\in J}\sum_{i,j\in K_l}(\lambda_l^0w_i^lw_j^lf_{ij,\Delta_{ij}}(x^0))t^{d_{ij}+\theta_l+\mu_i^l+\mu_j^l}  + \cdots\\
&=&\displaystyle\sum_{l\in J'}\sum_{(i,j)\in L_l}(\lambda_l^0w_i^lw_j^lf_{ij,\Delta_{ij}}(x^0))t^{M}  + \cdots\\
&=&\displaystyle\sum_{i,j=1}^p\left(\sum_{l\in J':\ (i,j)\in L_l}\lambda_l^0w_i^lw_j^l\right)f_{ij,\Delta_{ij}}(x^0)t^{M}  + \cdots \\
&=&\displaystyle\tr(\Omega F_\Delta(x^0))t^{M} + \textrm{ higher order term in } t.
\end{eqnarray*}
By Inequality  (\ref{Eq11}), we have\footnote{We say that $a(t) \simeq b(t)$ as $t \to 0$ if there exist positive constants $c_1$ and $c_2$ such  that $c_1|a(t)| \le |b(t)| \le c_2 |a(t)|$ for $0 \le t \ll 1.$} $f(\varphi(t))\simeq t^M$ as $t \to 0$, so
\begin{equation} \label{Eq12}
\left|\frac{d (f \circ \varphi)(t)}{dt}\right| \simeq t^{M-1} \quad \textrm{ as } \quad t \to 0.
\end{equation}
On the other hand, we have
\begin{eqnarray*}
\displaystyle\frac{d (f \circ \varphi) (t)}{dt}&=&\displaystyle\frac{d}{dt}\left(\sum_{l\in J}\lambda_l(t)\langle F(\varphi(t))v^l(t),v^l(t)\rangle\right) \\
&=&\displaystyle\sum_{l\in J}\frac{d\lambda_l(t)}{dt}\langle F(\varphi(t))v^l(t),v^l(t)\rangle+\sum_{l\in J}\lambda_l(t)\frac{d\langle F(\varphi(t))v^l(t),v^l(t)\rangle}{dt}.
\end{eqnarray*}
Since
\begin{eqnarray*}
\sum_{l\in J}\frac{d\lambda_l(t)}{dt}\langle F(\varphi(t))v^l(t),v^l(t)\rangle &=& \displaystyle\sum_{l\in J}\frac{d\lambda_l(t)}{dt}f(\varphi(t)) \\
&=& \frac{d\left(\sum_{l\in J}\lambda_l(t)\right)}{dt}f(\varphi(t)) = \frac{d1}{dt}f(\varphi(t))=0,
\end{eqnarray*}
we see that
\begin{displaymath}
\begin{array}{lll}
$$\displaystyle\frac{d (f \circ \varphi)(t)}{dt}&=&\displaystyle\sum_{l\in J}\lambda_l(t)\frac{d\langle F(\varphi(t))v^l(t),v^l(t)\rangle}{dt}$$\\
$$&=&\displaystyle\sum_{l\in J}\lambda_l(t)\frac{d}{dt}\left(\sum_{i,j=1}^pv_i^l(t)v_j^l(t)f_{ij}(\varphi(t))\right)$$\\
$$&=&\displaystyle\sum_{l\in J}\lambda_l(t)\sum_{i,j=1}^p\left(\frac{dv_i^l(t)}{dt}v_j^l(t)f_{ij}(\varphi(t))+v_i^l(t)\frac{dv_j^l(t)}{dt}f_{ij}(\varphi(t))\right)$$\\
$$&& + \displaystyle\sum_{l\in J}\lambda_l(t)\sum_{i,j=1}^pv_i^l(t)v_j^l(t)\langle\nabla f_{ij}(\varphi(t)), \frac{d \varphi}{d t} \rangle$$\\
$$&=& \displaystyle\sum_{l\in J}\lambda_l(t) \left(\sum_{i=1}^p\frac{dv_i^l(t)}{dt}\sum_{j=1}^pv_j^l(t)f_{ij}(\varphi(t))+\sum_{j=1}^p\frac{dv_j^l(t)}{dt}\sum_{i=1}^pv_i^l(t)f_{ij}(\varphi(t))\right)$$\\
$$&& + \displaystyle\langle u(t), \frac{d \varphi}{d t}  \rangle.$$
\end{array}
\end{displaymath}
Note that $F$ is symmetric and $F(\varphi(t))v^l=f(\varphi(t))v^l$, so
$$\displaystyle\sum_{j=1}^pv_j^l(t)f_{ij}(\varphi(t))=f(\varphi(t))v_i^l(t)\ \text{ and } \ \sum_{i=1}^pv_i^l(t)f_{ij}(\varphi(t))=f(\varphi(t))v_j^l(t).$$
Thus
\begin{displaymath}
\begin{array}{lll}
$$\displaystyle\frac{d (f \circ \varphi)(t)}{dt}&=&\displaystyle\sum_{l\in J}\lambda_l(t) \left(\sum_{i=1}^p\frac{dv_i^l(t)}{dt}f(\varphi(t))v_i^l(t)+\sum_{j=1}^p\frac{dv_j^l(t)}{dt}f(\varphi(t))v_j^l(t)\right)+\langle u(t), \frac{d \varphi}{d t} \rangle$$\\
$$&=&\displaystyle\frac{1}{2}\sum_{l\in J}\lambda_l(t)f(\varphi(t)) \left(\sum_{i=1}^p\frac{d(v_i^l(t))^2}{dt}+\sum_{j=1}^p\frac{d(v_j^l(t))^2}{dt}\right)+\langle u(t), \frac{d \varphi}{d t} \rangle$$\\
$$&=&\displaystyle\sum_{l\in J}\lambda_l(t)f(\varphi(t)) \frac{d}{dt}\|v^l(t)\|^2+\langle u(t), \frac{d \varphi}{d t} \rangle$$\\
$$&=&\displaystyle\sum_{l\in J}\lambda_l(t)f(\varphi(t)) \frac{d1}{dt}+\langle u(t), \frac{d \varphi}{d t}  \rangle$$\\
$$&=&\langle u(t), \frac{d \varphi}{d t} \rangle\le \|u(t)\| \| \frac{d \varphi}{d t}  \|=\frak m_f(\varphi(t))\|\frac{d \varphi}{d t} \|.$$\\
\end{array}
\end{displaymath}
This, together with Inequality (\ref{Eq12}), implies that there is $c' > 0$ such that
$$\frak m_f(\varphi(t))\ge \frac{c't^{M-1}}{t^{q_{s_*}-1}}=c't^{M-q_{s_*}}.$$
Since $\frak m_f(\varphi(t))\to 0$, it follows that $M-q_{s_*}>0,$ which is a contradiction.

\subsubsection*{Case 2: $M > q_{s_*}  := \min_{s \in I} q_s$} \

Recall that $\theta_l \ge 0$ and $\mu_i^l\ge 0$ for all $l\in J$ and $i\in K_l$. By Conditions (b) and (c), $\theta_{l_0}=0$ and $\mu_{i_0}^{l_0}= 0$ for some $l_0\in J$ and $i_0\in K_{l_0}.$ Since $f_{i_0i_0}$ is convenient, for $s = 1, \ldots, n,$ there exists an integer $m_s \ge 1$ such that $m_s e_s \in \Gamma_\infty(f_{i_0 i_0}).$ Then it is clear that
$$q_s m_s \ge d_{i_0 i_0}, \quad \textrm{ for all } \quad s \in I.$$
On the other hand, we have
$$d_{i_0 i_0} = d_{i_0 i_0} + \theta_{l_0} + \mu_{i_0}^{l_0} + \mu_{i_0}^{l_0} \ge \min_{l\in J,\ (i,j)\in K_l}(d_{ij}+\theta_l+\mu_i^l+\mu_j^l)=M.$$
Therefore
$$q_{s_*} m_{s_*} \ge d_{i_0 i_0} \ge M  > q_{s_*}.$$
Since $q_{s_*}  = \min_{s \in I} q_s   < 0,$ it implies that $m_{s_*} < 1$, which is a contradiction.
\end{proof}

\begin{lemma}\label{Lemma6}
Assume that there exist some constants $c > 0$ and $R > 0$ such that
\begin{equation*}
{\frak m}_{f} (x) \ge c \quad \textrm{ for all } \quad x \in f^{-1}((0, +\infty)) \quad \textrm{ and } \quad \|x\| \ge R.
\end{equation*}
Let $s \in S_F.$ Then
\begin{equation*}
\frac{c}{2}\mathrm{dist}(x, S_F)\le [f(x)]_+ \quad \textrm{ for all } \quad \|x\|\ge 3R + 2\|s\|.
\end{equation*}
\end{lemma}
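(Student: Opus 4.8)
The plan is to reuse the subgradient--trajectory argument from the proof of Theorem~\ref{CompactErrorBound}, preceded by an elementary dichotomy that produces the factor $\tfrac{c}{2}$. Fix $x$ with $\|x\|\ge 3R+2\|s\|$. If $f(x)\le 0$ then $F(x)\preceq 0$, so $x\in S_F$ and both sides of the asserted inequality vanish; hence we may assume $f(x)>0$, so that $[f(x)]_+=f(x)$. The first, trivial, case is $[f(x)]_+\ge \tfrac{c}{2}\big(\|x\|+\|s\|\big)$: since $s\in S_F$,
$$\frac{c}{2}\,\mathrm{dist}(x,S_F)\ \le\ \frac{c}{2}\,\|x-s\|\ \le\ \frac{c}{2}\big(\|x\|+\|s\|\big)\ \le\ [f(x)]_+ ,$$
and we are done. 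So it remains to treat the case $[f(x)]_+<\tfrac{c}{2}\big(\|x\|+\|s\|\big)$.

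In that case I would bring in, exactly as in the proof of Theorem~\ref{CompactErrorBound}, the locally Lipschitz function $f_+(y):=\max_{\{\|v\|=1\}\cup\{0\}}\langle F(y)v,v\rangle$, which agrees with $f$ and satisfies $\frak m_{f_+}=\frak m_f$ on the open set $\{f>0\}$ and has $\partial f_+(y)=\hat\partial f_+(y)\ne\emptyset$ everywhere, and run the subgradient dynamical system $0\in\dot u(t)+\partial[f_+(u(t))]$ with $u(0)=x$. By \cite{Bolte2007} this has an absolutely continuous solution $u\colon[0,+\infty)\to\Bbb R^n$ with $\|\dot u(t)\|=\frak m_{f_+}(u(t))$ and $\tfrac{d}{dt}(f_+\circ u)(t)=-[\frak m_{f_+}(u(t))]^2$ for almost all $t$. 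Set
$$T_0:=\sup\Big\{t>0\ :\ \|u(s)\|>R \ \text{ and }\ f_+(u(s))>0 \ \text{ for all } s\in[0,t)\Big\}\ >\ 0 .$$
On $[0,T_0)$ the hypothesis yields $\frak m_{f_+}(u(t))=\frak m_f(u(t))\ge c$, whence $\tfrac{d}{dt}(f_+\circ u)(t)=-\frak m_{f_+}(u(t))\,\|\dot u(t)\|\le -c\,\|\dot u(t)\|$ a.e.; integrating gives
$$\mathrm{length}\big(u|_{[0,t)}\big)\ \le\ \frac{1}{c}\big([f(x)]_+-f_+(u(t))\big)\ \le\ \frac{1}{c}\,[f(x)]_+\ <\ \frac{1}{2}\big(\|x\|+\|s\|\big)\qquad\text{for all }t\in[0,T_0) .$$

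The crux is to observe that the ball constraint $\|u(s)\|>R$ is never responsible for $T_0$: from the length bound, $\|u(t)\|\ge\|x\|-\mathrm{length}(u|_{[0,t)})>\|x\|-\tfrac12(\|x\|+\|s\|)=\tfrac12(\|x\|-\|s\|)\ge\tfrac12(3R+\|s\|)>R$ on $[0,T_0)$, and this estimate survives the passage to the limit $t\to T_0$ (the finite length lets $u$ extend continuously to $T_0$ when $T_0<+\infty$, or converge to a point $x^\infty$ when $T_0=+\infty$). Arguing now exactly as in Cases~1 and~2 of the proof of Theorem~\ref{CompactErrorBound}: if $T_0<+\infty$, then one of the two defining conditions fails at $T_0$, and since the length bound keeps $\|u(T_0)\|>R$ it must be that $f_+(u(T_0))=0$; if $T_0=+\infty$, then $u(t)$ has finite length, hence converges to some $x^\infty$ with $\frak m_{f_+}(x^\infty)=0$ (\cite{Bolte2007}), and $f_+(x^\infty)>0$ would give, via $\|x^\infty\|>R$ and the hypothesis, $\frak m_f(x^\infty)=\frak m_{f_+}(x^\infty)\ge c>0$, a contradiction. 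Either way the limit point $y$ of $u$ satisfies $f(y)\le 0$, i.e.\ $y\in S_F$, so that
$$\mathrm{dist}(x,S_F)\ \le\ \|x-y\|\ \le\ \mathrm{length}\big(u|_{[0,T_0)}\big)\ \le\ \frac{1}{c}\,[f(x)]_+ ,$$
which is even stronger than required; combining the two cases proves the lemma.

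I expect the only delicate points to be the bookkeeping in the case $T_0=+\infty$ and the verification that the trajectory never leaves the region $\{f>0\}\cap\{\|\cdot\|>R\}$ on which $\frak m_f\ge c$ is available, but these are essentially copies of the arguments in the proof of Theorem~\ref{CompactErrorBound}; the genuinely new ingredient is the opening dichotomy, which is exactly what accounts for the constant $\tfrac{c}{2}$ and for the threshold $\|x\|\ge 3R+2\|s\|$.
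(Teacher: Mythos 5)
Your proof is correct, but it takes a genuinely different route from the paper's. The paper argues by contradiction via the Ekeland variational principle: assuming $[f(\bar x)]_+ < \frac{c}{2}\mathrm{dist}(\bar x,S_F)$ for some $\|\bar x\|\ge 3R+2\|s\|$, it applies Ekeland's principle to $[f]_+$ on $K=\{\|x\|\ge R\}$ with $\epsilon=\frac{c}{2}\mathrm{dist}(\bar x,S_F)$ and $\lambda=\frac{2}{3}\mathrm{dist}(\bar x,S_F)$; the threshold $3R+2\|s\|$ is used precisely to show that the resulting point $\bar y$ lies in the interior of $K$ and outside $S_F$, so that $\frak m_f(\bar y)=\frak m_{f_+}(\bar y)\le \epsilon/\lambda=\frac{3c}{4}<c$, a contradiction. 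You instead globalize the subgradient-trajectory argument from the proof of Theorem~\ref{CompactErrorBound}: your opening dichotomy (which is exactly what produces the factor $\frac{c}{2}$) reduces matters to the case $[f(x)]_+<\frac{c}{2}(\|x\|+\|s\|)$, where the length estimate $\mathrm{length}(u|_{[0,t)})\le \frac{1}{c}[f(x)]_+$ keeps the trajectory in $\{\|\cdot\|>R\}$, so the slope bound $\frak m_f\ge c$ remains available along it and the terminal/limit point lies in $S_F$. What each approach buys: the Ekeland route is shorter and needs only the variational principle plus the subdifferential sum rule, whereas your route re-uses the heavier existence and convergence theory for the subgradient dynamical system of $f_+$ (already set up in the paper for Theorem~\ref{CompactErrorBound}), is constructive (it exhibits a point of $S_F$ within distance $\frac{1}{c}[f(x)]_+$ of $x$), and in the nontrivial case even gives the stronger inequality $c\,\mathrm{dist}(x,S_F)\le [f(x)]_+$, the factor $\frac12$ being lost only in the trivial case. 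Two minor remarks, neither affecting correctness: your case $T_0=+\infty$ is in fact vacuous, since on $[0,T_0)$ one has $\frac{d}{dt}(f_+\circ u)(t)\le -c^2$ a.e., so $T_0\le f_+(x)/c^2<+\infty$; and no continuous ``extension'' of $u$ to $T_0$ is needed, since the trajectory is already defined on all of $[0,+\infty)$.
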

\begin{proof}
We argue by contradiction. Suppose that the conclusion is false. Then there exists $\bar x \in \mathbb{R}^n$ such that $$\|\bar x\|\ge 3R + 2\|s\| \quad \textrm{ and } \quad [f(\bar x)]_+<\frac{c}{2}\mathrm{dist}(\bar x, S_F).$$
Clearly $\bar x\not\in S_F$. Set $K:=\{x \in \mathbb{R}^n \ : \ \|x\|\ge R\}.$ Note that $\inf_K [f(x)]_+\ge 0$, and so
$$[f(\bar{x})]_+ < \inf_K [f(x)]_+ + \frac{c}{2}\mathrm{dist}(\bar{x}, S_F).$$
By applying Ekeland variational principle \cite{Ekeland1979} to the function $[f(x)]_+$ on the closed set $K$ with the data $\epsilon := \frac{c}{2}\mathrm{dist}(\bar x, S_F) > 0$ and $\lambda := \frac{2\mathrm{dist}(\bar x, S_F)}{3} > 0$, there is $\bar y\in K$ such that $\|\bar y - \bar x\| < {\lambda}$ and that $\bar y$ minimizes the function
$$x\mapsto [f(x)]_+ + \frac{\epsilon}{\lambda}\|x-\bar y\|.$$
It follows that
\begin{eqnarray*}
\|\bar y\| & \ge & \|\bar x\| - \|\bar y - \bar x\| >  \|\bar x\| - \frac{2}{3}\mathrm{dist}(x, S_F) \\
& \ge & \|\bar{x}\| - \frac{2}{3}\|\bar{x} - s\| \ge \|\bar{x}\| - \frac{2}{3}(\|\bar x\| + s\|) 
= \frac{1}{3}(\|\bar{x} - 2 \|s\|) \ge R.
\end{eqnarray*}
Thus $\bar y$ is an interior point of $K.$ Then we deduce from \cite[Theorem~5.21(iii)]{Mordukhovich2006} that
$$0\in \partial [f(\bar y)]_+ + \frac{\epsilon}{\lambda}\Bbb B^n.$$
By the definition of the function $\frak m_{f_+},$ it follows easily that
$$\frak m_{f_+} (\bar y)\leq \frac{\epsilon}{\lambda}.$$
Since $\bar x \not \in S_F$ and $\|\bar y - \bar x\| < \lambda = \frac{2}{3}\mathrm{dist}(\bar x, S_F)$, we have 
$\bar{y} \not \in S_F$ and so $f(\bar y)>0.$ Therefore
$$\frak m_{f} (\bar y)=\frak m_{f_+} (\bar y)\leq \frac{\epsilon}{\lambda} = \frac{3c}{4} < c,$$
which is a contradiction.
\end{proof}

Now, we are in position to finish the proof of Theorem \ref{HolderTypeTheorem}.

\begin{proof} [Proof of Theorem \ref{HolderTypeTheorem}] 
By Lemma \ref{Lemma5}, there exist some constants $c_1 > 0$ and $R > 0$ such that
\begin{equation*}
{\frak m}_{f} (x) \ge c_1 \quad \textrm{ for all } \quad \|x\| \ge R.
\end{equation*}
Let us fix a point $s$ in $S_F.$ Due to Lemma \ref{Lemma6}, we obtain
\begin{equation}\label{nu1}
\frac{c_1}{2} \mathrm{dist}(x, S_F) \le [f(x)]_+, \quad \text{ for all } \quad  \|x\|\ge 3R + 2\|s\|.
\end{equation}

On the other hand, thanks to Theorem \ref{CompactErrorBound}, we get a constant $c_2 > 0$ satisfying
\begin{equation}\label{nu2}
c_2 \mathrm{dist}(x, S_F)\leq [f(x)]_+^{\frac{1}{\mathscr{R}(2n+p(n+1),d+3)}}, \quad \textrm{ for all }  \ \|x\| \le 3R + 2\|s\|.
\end{equation}
Let $c := \min \{\frac{c_1}{2}, c_2\} > 0.$  Taking account of   (\ref{nu1}) and  (\ref{nu2}), we obtain
$$c\mathrm{dist}(x,S_F)\leq [f(x)]_+^{\frac{1}{\mathscr{R}(2n+p(n+1),d+3)}} +[f(x)]_+, \quad \textrm{for all } \quad x \in \Bbb R^n,$$
as it was to be shown.
\end{proof}

\subsection*{Acknowledgments}
We are grateful to the referee for careful reading and corrections of the manuscript.

\end{document}